\newtheorem{theorem}{Theorem}[section]
\newtheorem{lemma}[theorem]{Lemma}
\newtheorem{corollary}[theorem]{Corollary}
\newtheorem{proposition}[theorem]{Proposition}
\theoremstyle{definition}
\newtheorem{definition}[theorem]{Definition}
\newtheorem{remark}[theorem]{Remark}
\newtheorem{notation}[theorem]{Notation}
\newcommand{\End}{{ \rm End }}
\newcommand{\Coker}{{ \rm Coker }}
\newcommand{\Hom}{{ \rm Hom }}
\newcommand{\Ker}{{ \rm Ker }\,}
\newcommand{\Mod}{{ \rm Mod }}
\newcommand{\rad}{{ \rm rad }}
\renewcommand{\frak}{\mathfrak}
\newcommand{\proj}{{\rm proj }}
\newcommand{\g}{\hbox{-}}
\newcommand{\uddots}{\mathinner{\mkern1mu\raise1pt\vbox{\kern7pt\hbox{.}}
\mkern2mu\raise4pt\hbox{.}\mkern2mu\raise7pt\hbox{.}\mkern1mu}}
\newcommand{\Endol}[1]{\hbox{\rm endol}\,(#1)}
\newcommand{\hueca}[1]{\mathbb{#1}}
\newcommand{\lddots}{
\mathinner{
\mkern1mu\raise1pt}\vbox{\kern7pt\hbox{.}}
\mkern2mu\raise3pt\hbox{.}
\mkern2mu\raise7pt\hbox{.}\mkern1mu}
\renewcommand{\mod}{{\rm mod }}
\renewcommand{\Im}{{\rm Im}\,}
\newcommand{\rightdashmap}[1]{\smash{\mathop{\hbox to 
20pt{-\,-\,-\,\rightarrowfill}}\limits^{#1}}}
\newcommand{\rightmap}[1]{\smash{\mathop{\hbox to 
20pt{\rightarrowfill}}\limits^{#1}}}
\newcommand{\leftmap}[1]{\smash{\mathop{\hbox to 
20pt{\leftarrowfill}}\limits^{#1}}}
\newcommand{\dobleflechava}[2]{\ \smash{\mathop{
   \raise 3pt 
   \hbox to 20pt{\rightarrowfill}\hskip-20pt         \lower 3pt
   \hbox to 20pt{\rightarrowfill}}\limits^{#1}_{#2}}\ }
\newcommand{\rmapdown}[1]{\Bigg\downarrow\rlap{$\vcenter{\hbox{$\scriptstyle#1$}
}$}}
\newcommand{\shortlmapdown}[1]
{\llap{$\vcenter{\hbox{$\scriptstyle#1$}}$}\big\downarrow}
\newcommand{\shortrmapdown}[1]
{\big\downarrow\rlap{$\vcenter{\hbox{$\scriptstyle#1$}}$}}
\newcommand{\shortlmapup}[1]
{\llap{$\vcenter{\hbox{$\scriptstyle#1$}}$}\big\uparrow}
\newcommand{\shortrmapup}[1]
{\big\uparrow\rlap{$\vcenter{\hbox{$\scriptstyle#1$}}$}}
\newcommand{\longrightmap}[1]{\smash{\mathop{\hbox to 
4cm{\rightarrowfill}}\limits^{#1}}}
\newcommand{\longleftmap}[1]{\smash{\mathop{\hbox to 
4cm{\leftarrowfill}}\limits^{#1}}}
\newcommand{\medrightmap}[1]{\smash{\mathop{\hbox to 
2cm{\rightarrowfill}}\limits^{#1}}}
\newcommand{\medleftmap}[1]{\smash{\mathop{\hbox to 
2cm{\leftarrowfill}}\limits^{#1}}}
\newcommand{\dobleflechavieneva}[2]{\ \smash{\mathop{
   \raise 3pt \hbox to 40pt{\leftarrowfill}\hskip-40pt \lower 3pt
   \hbox to 40pt{\rightarrowfill}}\limits^{#1}_{#2}}\ }
\newcommand{\dobleflechavienevabis}[2]{\ \smash{\mathop{
   \raise 3pt \hbox to 20pt{\leftarrowfill}\hskip-20pt \lower 3pt
   \hbox to 20pt{\rightarrowfill}}\limits^{#1}_{#2}}\ }
\newcommand{\idmapdown}[1]
{\hskip-8pt\mathop{\hskip-5pt\raise6pt
\hbox{$\scriptstyle#1$}\hskip-5pt\swarrow}}
\newcommand{\ddmapdown}[1]
{\hskip-5pt\mathop{\searrow\hskip-6pt\raise5pt\hbox{$\scriptstyle#1$}}}
\newcommand{\idmapup}[1]
{\hskip-5pt\mathop{\nwarrow\hskip-6pt \raise5pt\hbox{$\scriptstyle#1$}}}
\newcommand{\ddmapup}[1]
{\hskip-8pt\mathop{\hskip-5pt\raise6pt\hbox{$\scriptstyle#1$}\hskip-5pt\nearrow}
}
\newcommand{\flechypunt}[2]{\ \smash{\mathop{
   \raise 3pt \hbox to 40pt{\rightarrow}\hskip-40pt \lower 3pt
   \hbox to 40pt{\dashrightarrow}}\limits^{#1}_{#2}}\ }
\newcommand{\longequal}{\ \smash{\mathop{
   \raise 5pt \hbox to 35pt{\hrulefill}\hskip-35pt \lower 0pt
   \hbox to 35pt{\hrulefill}}}\ }
\newcommand{\raya}[1]{\ \smash{\mathop{\raise 2pt \hbox to 
10pt{\hrulefill}}\limits^{#1}}\ }
\title{\bf {\Large On generic bricks over tame algebras}}
\author{R. Bautista, E. P\'erez, L. Salmer\'on}
\begin{document}
 \date{}
 
 \maketitle
  
 \renewcommand{\thefootnote}{}

\footnote{2010 \emph{Mathematics Subject Classification}:
   16E45, 16E30, 16W50, 18E30, 18E10.}

\footnote{\emph{Keywords and phrases}: generic modules, bricks, tame algebras, bocses, ditalgebras.}

\hbox{\emph{Dedicated to Professor C.M. Ringel in the occasion of his 80th birthday}}

  \begin{abstract}
\noindent We prove that if $\Lambda$ is a tame  finite-dimensional  algebra over an algebraically closed field and $G$ is a generic $\Lambda$-module, then $G$ is a generic brick if and only if it determines a one-parameter family of bricks with the same dimension. In particular, we obtain that $\Lambda$ admits a generic brick if and only if $\Lambda$ is brick-continuous.     
\end{abstract}

\section{Introduction}

Throughout this work,  unless we specify  otherwise, $k$ denotes a fixed algebraically closed field and $\Lambda$ is a tame finite-dimensional   $k$-algebra. We will denote by $\Lambda\g\Mod$ the category of left $\Lambda$-modules and by $\Lambda\g\mod$ its full subcategory of finite-dimensional modules. 

Recall that given $G\in \Lambda\g\Mod$, the \emph{endolength} $\Endol{G}$ of $G$ is the length of $G$ when considered as an $\End_{\Lambda}(G)^{op}$-module. The module $G$ is called \emph{generic} if it is indecomposable, with infinite length as $\Lambda$-module, but 
with finite endolength. Generic modules were introduced by Crawley-Boevey in \cite{CB2}, where he proved that a tame algebra $\Lambda$ is characterized by the fact that, for each endolength $d\in\hueca{N}$, there are only finitely many generic $\Lambda$-modules with endolength $d$. This  characterization of tame-representation type is important, because it provided a useful definition of tame algebras for more general rings, see \cite{CB3}. Crawley-Boevey proved also that 
whenever $G$ is a generic $\Lambda$-module, we have  that $\End_\Lambda (G)/\rad \End_\Lambda (G) \cong k(x)$, the field of rational functions in the variable $x$,  and $\End_\Lambda (G)$ is split over its radical. 
The proofs of these facts use bocses techniques, which go back to the work of the Kiev School in representation theory, see also  \cite{R-K}, \cite{D}, and \cite{CB1}. Moreover, Crawley-Boevey
proved in \cite{CB2} that, in the tame case, any generic $\Lambda$-module $G$ gives rise to  infinite parameter families of non-isomorphic indecomposable $\Lambda$-modules  as follows. 

Recall that a rational algebra $\Gamma$ is a 
finitely generated localization of $k[x]$, so $\Gamma=k[x]_h$, for some polynomial $h\in k[x]$. If $G$ is a generic $\Lambda$-module and $\Gamma$ is a rational algebra, then a \emph{realization of $G$ over $\Gamma$} is a finitely generated $\Lambda\g \Gamma$-bimodule $M$ such that if $K$ is the quotient field of $\Gamma$, then $M\otimes_\Gamma K\cong G$ and $\dim_K(M\otimes_\Gamma K)=\Endol{G}$.  This notion was proposed by 
Crawley-Boevey in \cite{CB2}, where he proved the following.

\begin{theorem}[Crawley-Boevey]\label{T: CB-realizations} If $\Lambda$ is a  tame 
finite-dimensional algebra, then for each generic $\Lambda$-module $G$ there is  a realization $M$ over some rational algebra $\Gamma$  with the following properties: 
\begin{enumerate}
\item  As a right $\Gamma$-module, $M$ is 
free of rank equal to the endolength of $G$. 
\item The functor $M\otimes_\Gamma-:\Gamma\g\Mod\rightmap{}\Lambda\g\Mod$ 
 preserves isomorphism classes,  indecomposability, and Auslander-Reiten sequences. 
\end{enumerate} 
\end{theorem}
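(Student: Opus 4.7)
The plan is to follow the bocs/ditalgebra reduction methodology. The first step is to associate to $\Lambda$ a layered ditalgebra $\mathcal{A}$ together with a functor $F:\mathcal{A}\g\Mod\rightarrow \Lambda\g\Mod$ that is exact on a suitable admissible subcategory, preserves isomorphism classes, indecomposability and Auslander-Reiten sequences, and under which every generic $\Lambda$-module lifts to a generic $\mathcal{A}$-module. This is the standard output of the bocs construction attached to a finite-dimensional algebra, and gives a more flexible category in which to perform reductions.

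First I would lift the given generic module $G$ to a generic object $\widetilde G\in \mathcal{A}\g\Mod$ and then apply successive reduction steps for ditalgebras (edge, regularisation, absorption, unravelling). Each such step replaces the current layered ditalgebra $\mathcal{A}$ by a new one $\mathcal{A}'$ together with a reduction functor $\mathcal{A}'\g\Mod\rightarrow \mathcal{A}\g\Mod$ preserving the relevant structural properties. Because $\Lambda$ is tame, only tame-type reductions are available and the endolength of the image of $\widetilde G$ stays bounded; using the finiteness of generic modules of bounded endolength from \cite{CB2} one shows that the algorithm terminates at a minimal layered ditalgebra $\mathcal{A}^{\min}$ in which $\widetilde G$ appears as a representation of a very simple shape, essentially supported at a single vertex carrying a single generic scalar parameter.

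Next, at this terminal stage the image of $G$ is described by a one-dimensional representation of a rational algebra $\Gamma=k[x]_h$ over its quotient field $Q$, together with some extra linear data at one vertex. From this one reads off directly a bimodule over $\mathcal{A}^{\min}$ and $\Gamma$ which is free of rank $\Endol{G}$ as a right $\Gamma$-module and whose base change to $Q$ recovers the generic representation. Pushing this bimodule back through the finite chain of reduction functors and finally through $F$ yields the desired $\Lambda\g\Gamma$-bimodule $M$. Property (1) is forced because each reduction step transforms right-$\Gamma$-structures by tensoring with finitely generated projective modules, which, after possibly enlarging the denominator $h$, preserve freeness and the rank equal to $\Endol{G}$. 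Property (2) follows because $F$ together with all reduction functors preserve isomorphism classes, indecomposability and Auslander-Reiten sequences, while $M\otimes_\Gamma -$ is exact thanks to right-freeness of $M$.

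The hard part, in my view, splits into three points. The first is establishing the initial bocs construction with all the preservation properties one wants; this is substantial but classical. The second, where tameness is really used, is controlling the endolength along the reduction so that the process terminates at a ditalgebra from which one can read off the generic one-parameter family directly; here one must invoke Drozd's tame-wild dichotomy to rule out wild reduction branches. The third, more technical than conceptual, is the careful bookkeeping required to keep the right $\Gamma$-module free and of the correct rank through every reduction step and the final assembly.
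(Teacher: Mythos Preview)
The paper does not give its own proof of this statement: Theorem~\ref{T: CB-realizations} is quoted as a known result of Crawley-Boevey, with the reference \cite{CB2}, and is used throughout as background. The only place the paper touches its proof is in Remark~\ref{R: first choice of min ditalg}, where it recalls that the realization $M$ arises as $Z\frak{f}_{z_0}$ for the bimodule $Z$ produced by the reduction-to-minimal-ditalgebra procedure of Theorem~\ref{T: gens for tame algs}; this is exactly the bocs/ditalgebra approach you outline.

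Your sketch is therefore on the right track in spirit, but there is a logical slip in the termination argument. You write that ``using the finiteness of generic modules of bounded endolength from \cite{CB2} one shows that the algorithm terminates''. That finiteness is one of the main theorems of \cite{CB2}, proved precisely by the same reduction algorithm, so invoking it here is circular. In the actual argument, termination is controlled by a norm on the layer data of the ditalgebra (a dimension-vector quantity that strictly decreases under each nontrivial reduction); tameness enters only to exclude the appearance of a free bocs at any stage, not via any count of generic modules. Similarly, you should not ``invoke Drozd's tame-wild dichotomy'' as an external input: the dichotomy is itself a consequence of this reduction process, and what is really used is that if a wild configuration ever appeared, $\Lambda$ would be wild, contradicting the hypothesis. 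With these corrections your outline matches the approach recorded in \cite{CB2} and \cite{BSZ}.
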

 
In this work, given a realization $M$ of $G$ over some rational algebra $\Gamma=k[x]_h$, we are mainly interested in the one-parameter family of indecomposable $\Lambda$-modules $M(\lambda):=M\otimes_\Gamma\Gamma/(x-\lambda)$, for $\lambda\in D(h):=\{\lambda\in k\mid h(\lambda)\not=0\}$.

We recall also that a $\Lambda$-module $N$ is called a \emph{brick} if $\End_\Lambda(N)$ is a division algebra. So, a finite-dimensional $\Lambda$-module $N$  is a \emph{brick}  if $\End_{\Lambda}(N) \cong  k$. For instance, if $\Lambda$ is tame hereditary, a finite-dimensional brick corresponds to a preprojective, or a  preinjective module, or to a module lying in the mouth of a regular component in the Auslander-Reiten quiver of $\Lambda$, see \cite{ars} and \cite{Ri}. 

Following Mousavand and Paquette, we say that  the algebra $\Lambda$ is \emph{brick-infinite} if it admits an infinite family of non-isomorphic finite-dimensional bricks, see \cite{MP2}, \cite{DIJ}, and \cite{Sentieri}. It is called \emph{brick-continuous} if it admits an infinite family of non-isomorphic bricks with the same dimension. A generic $\Lambda$-module $G$ is called a \emph{generic brick} if $\End_{\Lambda}(G)\cong k(x)$. The study of bricks in representation theory of finite-dimensional algebras has a long history, see \cite{Ri-spec}, \cite{B-D}, \cite{MP2}, and \cite{MP3}.   Recently, it has received  a lot of attention because it plays  some relevant role in $\tau$-tilting theory of algebras and  brick analogues to the Brauer-Thrall conjectures have been formulated, see \cite{MP1} and \cite{MP3}. 

Biserial algebras are an important class of tame algebras, see \cite{CB4}.  
In \cite{MP2}, K. Mousavand and C. Paquette  proved  that  a biserial algebra $\Lambda$ is  brick-infinite if and only if it admits a generic brick. 
They conjectured, see \cite[(4.1)]{MP2}, for a  general tame finite-dimensional  algebra  $\Lambda$, 
that $\Lambda$ is brick-continuous if and only if  $\Lambda$ admits a generic brick.

The aim of this work is to prove the following.
\medskip

\begin{theorem}\label{T: gen-bricks vs bricks} Let $\Lambda$ be a tame finite-dimensional  algebra,  
assume that $G$ be a generic $\Lambda$-module, and let $M$ be  a realization of $G$ over a rational algebra $\Gamma$, then the following are equivalent:
\begin{enumerate}
\item $G$ is a generic brick; 
\item  For infinitely many $\lambda\in k$, we have 
that $M(\lambda)$  is a brick.  
\end{enumerate}
\end{theorem}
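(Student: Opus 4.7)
The plan is to reduce both directions of the biconditional to a single computation of the bimodule endomorphism algebra $\End_{\Lambda,\Gamma}(M)$, using that $M$ is $\Gamma$-free of rank $d=\Endol{G}$. This freeness yields, for every $\Gamma$-algebra $S$, the natural base-change isomorphism
$$\End_{\Lambda,\Gamma}(M)\otimes_\Gamma S \;\cong\; \End_{\Lambda,S}(M\otimes_\Gamma S),$$
which I would apply in two flavours: with $S=Q$, producing $\End_{\Lambda,Q}(G)$; and with $S=k_\lambda:=\Gamma/(x-\lambda)$, producing $\End_\Lambda(M(\lambda))$ (the passage from $\End_{\Lambda,k}$ to $\End_\Lambda$ being automatic on finite-dimensional $\Lambda$-modules).

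Because $\End_{\Lambda,\Gamma}(M)$ sits inside the free $\Gamma$-module $\End_\Gamma(M)\cong M_d(\Gamma)$, it is finitely generated and torsion-free, hence $\Gamma$-free of some rank $n\geq 1$. Thus $\dim_k\End_\Lambda(M(\lambda))=n=\dim_Q\End_{\Lambda,Q}(G)$ for every $\lambda\in D(h)$, independently of $\lambda$. Since each $M(\lambda)$ is indecomposable by Theorem~\ref{T: CB-realizations}, its endomorphism ring is local, so $M(\lambda)$ is a brick exactly when $n=1$. A short integrality argument, using that the PID $\Gamma$ is integrally closed in its fraction field $Q$, then upgrades $n=1$ to $\End_{\Lambda,\Gamma}(M)=\Gamma$ and hence to $\End_{\Lambda,Q}(G)=Q$. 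In this way, condition (2) — brickness for infinitely many $\lambda$ — is equivalent to $\End_{\Lambda,Q}(G)=Q$, and in fact then $M(\lambda)$ is a brick for \emph{every} $\lambda\in D(h)$. The implication (1)$\Rightarrow$(2) is now immediate: if $\End_\Lambda(G)=k(x)$ is a field, then the centralizer of $Q=k(x)$ inside $k(x)$ is obviously $Q$.

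The hard part will be (2)$\Rightarrow$(1), which requires bridging $\End_{\Lambda,Q}(G)=Q$ and $\End_\Lambda(G)=k(x)$. A priori, inside the local $k(x)$-algebra $\End_\Lambda(G)=k(x)\oplus\rad\End_\Lambda(G)$ the split subfield $Q$ can fail to be central while still being self-centralizing, so self-centralization of $Q$ does not formally force $\End_\Lambda(G)=Q$. To close the gap I would rely on Crawley-Boevey's structural results on $\End_\Lambda(G)$ (local, split over its radical, residue field $k(x)$) combined with the bocses and ditalgebra reduction techniques mentioned in the introduction, aiming to replace $\Lambda$ by a minimal layered ditalgebra in which the generic module corresponding to $G$ has a transparently commutative local endomorphism algebra. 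Once commutativity is in hand, the embedding $Q\hookrightarrow\End_\Lambda(G)$ is automatically central, $\End_{\Lambda,Q}(G)=\End_\Lambda(G)$, and the computation of the previous paragraph finishes the proof. I expect this commutativity step, rather than the base-change bookkeeping, to be the technical heart of the argument.
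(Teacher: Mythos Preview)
Your framework for (1)$\Rightarrow$(2) via base change is sound and genuinely different from the paper's route. Two small corrections: the isomorphism $\End_{\Lambda,\Gamma}(M)\otimes_\Gamma k_\lambda \cong \End_\Lambda(M(\lambda))$ only holds for all but finitely many $\lambda$ (the obstruction is a $\mathrm{Tor}_1^\Gamma$ supported at finitely many points), so your ``independently of $\lambda$'' is too strong but harmless here; and the claim that the centralizer of $Q$ inside $k(x)$ is $Q$ needs the structure map $Q\to\End_\Lambda(G)=k(x)$ to be surjective, which follows from $\dim_Q G=\Endol{G}=\dim_{k(x)}G$. With these fixes your (1)$\Rightarrow$(2) is complete, and more elementary than the paper's argument, which already at this stage runs through the full ditalgebra machinery.

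The genuine gap is (2)$\Rightarrow$(1). You correctly isolate the problem --- passing from $\End_{\Lambda,Q}(G)=Q$ to $\End_\Lambda(G)=k(x)$ --- and rightly observe that centrality of $Q$ in $\End_\Lambda(G)$ would close it. But your expectation that in the minimal ditalgebra the generic module has a ``transparently commutative'' endomorphism algebra is wrong: the radical of $\End_{\cal B}(k(x)\frak{f}_0)$ decomposes as $\bigoplus_v {\cal H}_v$ with each ${\cal H}_v\cong\End_k(k(x))$ as a vector space, so it is enormous and far from commutative, and no commutativity of $\End_\Lambda(G)$ is available without essentially proving the theorem. The paper instead shows directly that every radical ${\cal B}$-endomorphism of $k(x)\frak{f}_0$ maps to zero under $\Coker\circ F$, by exhibiting explicit factorizations through objects killed by $\Coker$. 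This requires encoding brickness of the $M(\lambda)$ as a rank condition on a coefficient matrix $C(x,y)$ extracted from the differential of ${\cal B}$; normalizing $C(x,y)$ to upper-triangular form by a further localization of ${\cal B}$; and then, crucially, the Convolution Lemma: for any nonzero $c(x,y)=\sum_j a_j(y)x^j\in k[x,y]$, the map $q\mapsto\sum_j a_j(x)\, q\,\mu_{x^j}$ on $\End_k(k(x))$ is surjective. This surjectivity is what allows one to invert the factorization problem loop by loop, and it (together with the triangularization) is the technical heart you anticipated but did not supply.
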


As a consequence, we obtain the following. 

\begin{corollary}\label{C: tame conj M-P} Let $\Lambda$ be a tame finite-dimensional algebra. Then, $\Lambda$ is brick-continuous if and only if $\Lambda$  admits a generic brick. 
\end{corollary}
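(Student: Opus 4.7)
The plan is to deduce the corollary from Theorem~\ref{T: gen-bricks vs bricks} combined with Theorem~\ref{T: CB-realizations} and with the standard parametrization consequence of Crawley-Boevey's characterization of tame algebras: for each $d\in\hueca{N}$, all but finitely many isomorphism classes of indecomposable $\Lambda$-modules of $k$-dimension $d$ are of the form $M_i(\lambda)$, where $M_1,\ldots,M_s$ are realizations, over suitable rational algebras, of finitely many generic $\Lambda$-modules of endolength at most $d$.

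\emph{Sufficiency.} If $\Lambda$ admits a generic brick $G$, take a realization $M$ of $G$ over some $\Gamma=k[x]_h$ as provided by Theorem~\ref{T: CB-realizations}. Because $M$ is free of rank $\Endol{G}$ as a right $\Gamma$-module, every $M(\lambda)$ with $\lambda\in D(h)$ has $k$-dimension equal to $\Endol{G}$, and the $M(\lambda)$ are indecomposable and pairwise non-isomorphic by Theorem~\ref{T: CB-realizations}(2). Theorem~\ref{T: gen-bricks vs bricks} then yields infinitely many $\lambda\in k$ for which $M(\lambda)$ is a brick, producing an infinite family of non-isomorphic bricks of fixed dimension $\Endol{G}$; hence $\Lambda$ is brick-continuous.

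\emph{Necessity.} Suppose $\Lambda$ is brick-continuous, and fix an infinite family $\{N_j\}_{j\in\hueca{N}}$ of pairwise non-isomorphic bricks of common $k$-dimension $d$. Each $N_j$ is indecomposable, so by the parametrization recalled above, apart from finitely many exceptional $N_j$ each $N_j$ is isomorphic to $M_{i(j)}(\lambda_j)$ for one of finitely many realizations $M_1,\ldots,M_s$ of generic modules $G_1,\ldots,G_s$. By the pigeonhole principle there is an index $i_0$ such that infinitely many of the retained $N_j$ belong to the family $\{M_{i_0}(\lambda)\mid \lambda\in D(h_{i_0})\}$, and by Theorem~\ref{T: CB-realizations}(2) the corresponding parameters $\lambda$ are pairwise distinct. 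Applying the implication (2)$\Rightarrow$(1) of Theorem~\ref{T: gen-bricks vs bricks} to $G_{i_0}$ and $M_{i_0}$ shows that $G_{i_0}$ is a generic brick.

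The real mathematical work is done in Theorem~\ref{T: gen-bricks vs bricks}; the corollary is essentially a translation between a statement about a single generic module and a statement about a one-parameter family of bricks. The only step beyond Theorem~\ref{T: gen-bricks vs bricks} is the invocation of the tame parametrization by one-parameter families of realizations of generic modules in the necessity direction, and this is where I would expect whatever minor subtlety there is to lie.
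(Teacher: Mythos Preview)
Your proof is correct, and the sufficiency direction coincides with the paper's. For necessity you take a slightly different route. You invoke the tame parametrization in the form ``almost all $d$-dimensional indecomposables are $M_i(\lambda)$ for realizations $M_i$ of finitely many generic modules'' and then apply pigeonhole. The paper instead cites \cite{CB2}(5.5), which yields a \emph{single} generic $G$ with realization $M$ such that almost all of the given bricks are isomorphic to $M\otimes_\Gamma\Gamma/(x-\lambda_i)^{m_i}$ with possibly $m_i>1$; it then uses that $M\otimes_\Gamma-$ preserves almost split sequences (Theorem~\ref{T: CB-realizations}(2)) to exhibit, for each $m\geq 2$, a non-zero radical endomorphism of $M\otimes_\Gamma\Gamma/(x-\lambda)^m$ (the composite of two irreducible maps through $M\otimes_\Gamma\Gamma/(x-\lambda)^{m-1}$), thereby forcing $m_i=1$. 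Your approach buys you the avoidance of this Auslander--Reiten argument, at the cost of invoking the parametrization in a cleaner packaged form; the paper's approach is more self-contained relative to the specific statement (5.5) it cites. One small slip: the generic modules whose realizations parametrize the $d$-dimensional indecomposables have endolength exactly $d$ (so that $\dim_k M_i(\lambda)=d$), not merely at most $d$.
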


While we are on the subject, recall that Crawley-Boevey has proved, in \cite[(9.5)]{CB3}, that, for any finite-dimensional algebra $A$:  for some $d\in \hueca{N}$  there are infinitely many non-isomorphic finite-dimensional indecomposable $A$-modules with endolength $d$ if and only if $A$ has a generic module. Then, the following natural question arises for a finite-dimensional algebra $A$. Is it true that,  there is some   $d\in \hueca{N}$ and infinitely many non-isomorphic finite-dimensional $A$-bricks  with endolength $d$ if and only if $A$ has a generic brick?

Any two realizations $M$ and $N$ of the same generic $\Lambda$-module $G$, determine one-parameter families $\{M(\lambda)\}_\lambda$ and  $\{N(\lambda)\}_\lambda$, respectively, such that $M(\lambda)\cong N(\lambda)$, for almost all $\lambda\in k$, see \cite[(5.2)]{CB2}. Thus, in order to prove Theorem (\ref{T: gen-bricks vs bricks}), 
it will be enough to show one realization $M$ over some rational algebra $\Gamma$ that  satisfies the  equivalence stated in the preceding theorem.  

In order to prove this,  
we may assume, and we will  from now on, that $\Lambda$ is basic and indecomposable. This is the case because,   if $\Lambda_b$ is the basic algebra of $\Lambda$ and $P\otimes_{\Lambda_b}-:\Lambda_b\g\Mod\rightmap{}\Lambda\g\Mod$ is a Morita equivalence, then, any generic $\Lambda$-module $G$ admits a  realization of the form $M\cong P\otimes_{\Lambda_b} M_b$, where $M_b$ is realization of a generic $\Lambda_b$-module $G_b$ such that    $P\otimes_{\Lambda_b} G_b\cong G$. This follows from the argument in the proof of (\ref{T: gens for tame algs}).    

Furthermore, we assume 
 that $\Lambda=kQ/I$, where $Q$ is a finite connected quiver and $I$ is an admissible ideal of $kQ$, see \cite[II]{ASS}.  

The idea of the proof is to use the fact that any generic $\Lambda$-module $G$ admits a realization  $M$ such that $G$ and $M$ are obtained from a minimal ditalgebra ${\cal B}$ through an appropriate composition of functors 
$${\cal B}\g\Mod\rightmap{F}{\cal P}^1(\Lambda)\rightmap{ \ \Coker \ }\Lambda\g\Mod,$$
where $F$ is full and faithful, and $\Coker$ is the usual cokernel functor from the category ${\cal P}^1(\Lambda)$ of morphisms between projectives with superfluous image.  If $\hat{G}$ and $\{X_\lambda\}_\lambda$ are objects in ${\cal P}^1(\Lambda)$ such that $\Coker (\hat{G})\cong G$ and $\Coker (X_\lambda)\cong M(\lambda)$, what we have to show is that every endomorphism  in the radical of $\End_{{\cal P}(\Lambda)}(\hat{G})$ factors through objects in ${\cal P}(\Lambda)$ of the form $P\rightmap{id}P$ and $P\rightmap{}0$ if and only if, for infinitely many $\lambda\in k$, 
every endomorphism of the radical of $\End_{{\cal P}(\Lambda)}(X_\lambda)$ factors through objects in ${\cal P}(\Lambda)$ of the form $P\rightmap{id}P$ and $P\rightmap{}0$. 

We will assume that the reader has some familiarity with bocses techniques in terms of ditalgebras (an acronym for ``differential tensor algebras''), as presented in \cite{BSZ}. Compositions of reduction functors between module categories of ditalgebras are an important tool in our arguments. In very general terms, we work with  layered ditalgebras ${\cal A}=(T,\delta)$ such that  $T$ is a graded tensor algebra $T=T_R(W)$, with $R$ a commutative algebra and $W=W_0\oplus W_1$  an $R$-$R$-bimodule, such that, in the gradding of $T$, the elements of   $R$ and $W_0$ have degree 0 and those of  $W_1$ have degree 1. The category of ${\cal A}$-modules ${\cal A}\g\Mod$ has as objects the left $A$-modules, where $A=T_R(W_0)$, and the morphisms $f:X\rightmap{}Y$ in  ${\cal A}\g\Mod$ are pairs of morphisms $f=(f^0,f^1)$, with $f^0\in \Hom_R(X,Y)$ and $ f^1\in \Hom_{A\g A}(V,\Hom_k(X,Y))$, where $V=AW_1A$ is the homogeneous component of $T$ of degree 1. 
These pairs must satisfy the condition $f^0(ax)=af^0(x)+f^1(\delta(a))(x)$, for $x\in X$ and $a\in A$, that is 
$f^0$ is almost a morphism of $A$-modules, we have a correction which depends on $\delta(a)$ and $f^1$.  The composition is performed according to a special rule which uses again the differential $\delta$ of ${\cal A}$, see \cite[\S2]{BSZ}. There is a canonical embedding functor $L_{\cal A}: A\g\Mod\rightmap{}{\cal A}\g\Mod$ which is the identity on objects and maps each morphism of $A$-modules $f:X\rightmap{}Y$ onto the morphism $(f,0):X\rightmap{}Y$ of ${\cal A}$-modules. 

  An important example is the Drozd's ditalgebra ${\cal D}$ associated to the algebra $\Lambda$, which is very useful because  there is an equivalence of categories $\Xi_\Lambda:{\cal D}\g\Mod\rightmap{}{\cal P}^1(\Lambda)$. This   was used by Drozd to prove his famous tame-wild dichotomy theorem, see \cite{D}. His main idea to prove this theorem is to construct a minimal ditalgebra ${\cal B}$ and a composition of reduction functors   
  $H:{\cal B}\g\Mod\rightmap{}{\cal D}\g\Mod$, 
which, composed with $\Xi_\Lambda$ and $\Coker$, permitted some understanding of $\Lambda\g\Mod$. This idea was enriched by Crawley-Boevey in \cite{CB1} and \cite{CB2} to show the characterization of tame algebras and his theorem on realizations of generic modules  mentioned before.  The advantage of minimal ditalgebras ${\cal B}$ is that their module category is relatively simple, for instance the radical morphisms between important ${\cal B}$-modules admit  manageable   descriptions which permit to make explicit computations.

\section{Kernel-generating systems for $\Coker^1$}
  In this section we introduce a family of special morphisms, which will play a crucial role in our arguments.  This construction has some general features, so we start by considering  an artin algebra $A$  over a commutative artin ring. We will later specialize to the case of a finite-dimensional $k$-algebra  $\Lambda=kQ/I$ over an arbitrary field $k$.   We denote by $J$ the radical of the artin algebra $A$.  
   
   \begin{remark} Recall that the category ${\cal P}(A)$ consists of the triples $X=(P_1,P_2,\phi)$ where $P_1,P_2$ are projective $A$-modules and $\phi:P_1\rightmap{} P_2$ is a morphism in $A\g\Mod$. A morphism $u:X\rightmap{}X'$ in ${\cal P}(\Lambda)$ is a pair $u=(u_1,u_2)$ of morphisms of $A$-modules such that the following diagram commutes
$$\begin{matrix}P_1&\rightmap{\phi}&P_2\\
\shortlmapdown{u_1}&&\shortrmapdown{u_2}\\
P'_1&\rightmap{\phi'}&P'_2.\\
\end{matrix}$$  
We denote by ${\cal P}^1(A)$ the full subcategory of ${\cal P}(A)$ whose objects are the triples $(P_1,P_2,\phi)$ such that $\phi(P_1)\subseteq JP_2$. 
For each projective $P\in A\g\Mod$, we consider the objects $S(P)=(P,0,0)\in {\cal P}^1(A)$ and $U(P)=(P,P,id_P)\in {\cal P}(A)$. The cokernel functor $\Coker:{\cal P}(A)\rightmap{}A\g\Mod$ is full and dense, and maps $S(P)$ and $U(P)$ to $0$. 
\end{remark}

\begin{lemma}\label{L: facts en P(Lambda)}
If $u : (P_1,P_2,\phi)\rightmap{}(P'_1,P'_2,\phi')$ is any morphism in ${\cal P}(A)$ with $\Coker(u) = 0$, then $u = v + w$, for some morphisms $v, w : (P_1,P_2,\phi)\rightmap{}(P'_1,P'_2,\phi')$, where $v$ factors through  $S(P_1)$ and $w$ factors through  $U(P_2)$ in ${\cal P}(A)$. 
\end{lemma}

\begin{proof} Set $M:=\Coker(\phi)$ and $M':=\Coker(\phi')$. Then, we have the  commutative diagram
$$\begin{matrix}
P_1&\rightmap{\phi}&P_2&\rightmap{\eta}&M\\
\shortlmapdown{u_1}&&\shortlmapdown{u_2}&&\shortlmapdown{0}\\
P'_1&\rightmap{\phi'}&P'_2&\rightmap{\eta'}&M'\\
\end{matrix}$$
where $\eta$ and $\eta'$ are the canonical projections. Since $\eta'u_2=0$, there is a morphism $s:P_2\rightmap{}P'_1$ such that $\phi's=u_2$. Then, we have the commutative diagrams 
$$\begin{matrix} P_1&\rightmap{\phi}&P_2\\
\shortlmapdown{id}&&\shortrmapdown{0}\\
P_1&\rightmap{0}&0\\
\shortlmapdown{u_1-s\phi}&&\shortrmapdown{0}\\
P'_1&\rightmap{\phi'}&P'_2\\
\end{matrix} \hbox{ \  \ \ \ and \ \ \  \ }
\begin{matrix} P_1&\rightmap{\phi}&P_2\\
\shortlmapdown{\phi}&&\shortrmapdown{id}\\
P_2&\rightmap{id}&P_2\\
\shortlmapdown{s}&&\shortrmapdown{u_2}\\
P'_1&\rightmap{\phi'}&P'_2\\
\end{matrix}$$
Hence, $v:=(u_1-s\phi,0)$ and $w=(s\phi,u_2)$ are the desired morphisms. 
\end{proof}

\begin{lemma}\label{L: morfismos universales de P} If $P$ is an indecomposable projective $A$-module,  we have:
\begin{enumerate}
\item There is a morphism of $A$-modules $\phi_P : L_P \rightmap{} P$ with  $L_P$ projective, which is not a retraction (that is such that  $\Im \phi_P\subseteq JP$),   such that for any projective $P'$, any morphism $h:P'\rightmap{}P$ that is not a retraction factors through $\phi_P$. 
Moreover, whenever $u : L_P \rightmap{} L_P$ is a morphism such that 
$\phi_Pu=\phi_P$, we get that $u$ is an isomorphism. 

\item There is a morphism of $A$-modules 
$\psi_P: P \rightmap{} R_P$ with $R_P$ projective, which is not a section (that is such that  $\Im \psi_P\subseteq JR_P$),  such that, for any projective $P'$,  any morphism $g:P\rightmap{}P'$ that is not a section factors through $\psi_P$. Moreover, whenever  $v:R_P\rightmap{}R_P$ is a morphism such that $v\psi_P=\psi_P$, we get that $v$ is an isomorphism.
\end{enumerate}
\end{lemma}  

\begin{proof} (1): Consider a projective cover $p:L_P\rightmap{}JP$ and the inclusion $s:JP\rightmap{}P$. Then, it is easy to show that  the composition $\phi_P:=sp:L_P\rightmap{}P$ satisfies the  properties listed in the first item.  

\noindent(2): Consider the  duality $(-)^*=\Hom_{A^{op}}(-,A^{op}):A^{op}\g\proj\rightmap{}A\g\proj$. Given an indecomposable projective $A^{op}$-module $P$, we have the morphism of $A^{op}$-modules $\phi_P:L_P\rightmap{}P$, and the morphism of $A$-modules $\psi_{P^*}:=(\phi_P)^*:P^*\rightmap{}R_{P^*}$, where $R_{P^*}:=(L_P)^*$ is projective and $\Im \psi_{P^*}\subseteq JR_{P^*}$. It follows that $\psi_{P^*}$ satisfies the properties stated in the second item for $P^*$. So, the second item holds.  
\end{proof}

\begin{remark}\label{N: notation alpa, beta y gamma} Given any indecomposable projective $A$-module $P$, the morphisms $\phi_P$ and $\psi_P$ are uniquely determined up to isomorphism,  respectively, by the properties listed in items (1) and (2) of the last lemma. 

Once we have chosen morphisms $\phi_P$ and $\psi_P$,   we have  triples in ${\cal P}^1(A)$: 
$$L(P):=(L_P,P,\phi_P) \hbox{ \ and \ } R(P):=(P,R_P,\psi_P).$$  
Moreover, we have the  following morphisms in ${\cal P}(A):$
$$ \alpha_P := (\phi_P,id) : L(P)\rightmap{}U(P)\hbox{ \ and \ } \beta_P := (id,\psi_P) : U(P)\rightmap{}R(P);$$
and, in ${\cal P}^1(A)$, the morphism 
$\gamma_P := 
\beta_P\alpha_P= (\phi_P, \psi_P) : L(P)\rightmap{}R(P)$.
\end{remark}

\begin{lemma}\label{L: facts en P1(Lambda)}
Given $X\in {\cal P}^1(A)$ and an indecomposable projective $A$-module $P$, we have:  
\begin{enumerate}
\item Any morphism $u:X\rightmap{}U(P)$ in ${\cal P}(A)$ factors through $\alpha_P$.
\item Any morphism $v:U(P)\rightmap{}X$ in ${\cal P}(A)$ factors through $\beta_P$. 
\end{enumerate}
\end{lemma}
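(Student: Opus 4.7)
The plan is to prove both factorization statements by writing out the components of the morphisms and exploiting the hypothesis $X \in \mathcal{P}^1(\Lambda)$ (so $\phi(P_1)\subseteq \rad(\Lambda)P_2$) together with the projectivity of the various components.

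For part (1), write $X=(P_1,P_2,\phi)$ and $u=(u_1,u_2)$. The commutativity condition for $u:X\to U(\Lambda e_t)$ reads $u_1=u_2\phi$. Because $X\in \mathcal{P}^1(\Lambda)$, I then get
$$u_1(P_1)=u_2\phi(P_1)\subseteq u_2(\rad(\Lambda)P_2)\subseteq \rad(\Lambda)\cdot \Lambda e_t=\rad(\Lambda)e_t.$$
Now, as observed in the proof of Lemma \ref{L: L(Lambdaei), R(Lambdaej) inesc no isomorfos entre si}, $L(\Lambda e_t)$ is the minimal projective presentation of $S_t$, so $\Im(\phi_t)=\rad(\Lambda)e_t$. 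Since $P_1$ is projective and $\phi_t$ is surjective onto its image, I can lift $u_1$ through $\phi_t$ to obtain $r_1:P_1\to \bigoplus_{\alpha:t\to s}\Lambda e_s$ with $\phi_t r_1=u_1$. Then $r:=(r_1,u_2):X\to L(\Lambda e_t)$ is a morphism in $\mathcal{P}(\Lambda)$ (the required commutativity $\phi_t r_1=u_2\phi$ is just $u_1=u_2\phi$), and by inspection $\alpha_t r=(\phi_t r_1, u_2)=(u_1,u_2)=u$.

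For part (2), write $v=(v_1,v_2):U(\Lambda e_t)\to X$; the commutativity forces $v_2=\phi v_1$, and since $X\in \mathcal{P}^1(\Lambda)$ I conclude that $v_2(\Lambda e_t)\subseteq \rad(\Lambda)P_2$. Since $v_2$ is a left $\Lambda$-module map from $\Lambda e_t$, the element $v_2(e_t)$ also lies in $e_tP_2$; together these give $v_2(e_t)\in e_tP_2\cap \rad(\Lambda)P_2=e_t\rad(\Lambda)P_2$. Now $\rad(\Lambda)$ is generated by the arrows of $Q$, so $\rad(\Lambda)P_2=\sum_{\gamma\text{ arrow}}\gamma P_2$, and left multiplication by $e_t$ kills every summand $\gamma P_2$ for which $\gamma$ does not terminate at $t$. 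Hence
$$v_2(e_t)\in e_t\rad(\Lambda)P_2=\sum_{\beta:s\to t}\beta P_2,$$
and I can write $v_2(e_t)=\sum_{\beta:s\to t}\beta q_\beta$ with $q_\beta\in P_2$; replacing each $q_\beta$ by $e_sq_\beta$ (which leaves $\beta q_\beta$ unchanged since $\beta=\beta e_s$) I may further assume $q_\beta\in e_sP_2$. The $q_\beta$'s now define a $\Lambda$-linear map $s_2:\bigoplus_{\beta:s\to t}\Lambda e_s\to P_2$ by $e_s\mapsto q_\beta$ on the $\beta$-component, and by construction $s_2\psi_t(e_t)=\sum_\beta \beta q_\beta=v_2(e_t)$, hence $s_2\psi_t=v_2$. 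Then $s:=(v_1,s_2):R(\Lambda e_t)\to X$ is a morphism in $\mathcal{P}(\Lambda)$ (commutativity: $\phi v_1=v_2=s_2\psi_t$) satisfying $s\beta_t=(v_1,s_2\psi_t)=(v_1,v_2)=v$.

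The argument is essentially routine once the right reformulations are made; the only subtle point, and the one where the hypothesis $X\in\mathcal{P}^1(\Lambda)$ is indispensable, is the containment $u_1(P_1)\subseteq \Im(\phi_t)$ in part (1) and the dual containment $v_2(e_t)\in \Im(\psi_t\text{-expansion})$ in part (2). In both cases the condition on $\phi$ is exactly what forces the relevant image to land in the radical, which is precisely the piece reached by $\phi_t$ and $\psi_t$.
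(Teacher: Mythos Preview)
Your proof is correct and follows essentially the same approach as the paper's own argument: in both parts you exploit the commutativity condition and the hypothesis $\phi(P_1)\subseteq\rad(\Lambda)P_2$ to force the relevant component into the radical, then lift through $\phi_t$ (resp.\ construct a section for $\psi_t$) using projectivity. Your handling of part~(2) is slightly cleaner---you go directly to the decomposition $v_2(e_t)=\sum_{\beta:s\to t}\beta q_\beta$ rather than first expanding over paths and regrouping---but the idea is identical.
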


\begin{proof} Assume $X=(P_1,P_2,\phi)$,  $u=(u_1,u_2)$, and $v=(v_1,v_2)$.  

In order to prove (1), notice that, since $u:X\rightmap{}U(P)$ is a morphism and $X\in {\cal P}^1(A)$, we have $u_1=u_2\phi$ and $\phi(P_1)\subseteq JP_2$. Thus, $u_1(P_1)\subseteq JP$ and $u_1$ is not a retraction. So,  there is a morphism $h:P_1\rightmap{}L_P$ such that $u_1=\phi_P h$,  and we obtain the commutative diagram 
$$\begin{matrix} 
P_1&\rightmap{\phi}&P_2\\
\shortlmapdown{h}&&\shortrmapdown{u_2}\\
L_P&\rightmap{\phi_P}&P\\
\shortlmapdown{\phi_P}&&\shortrmapdown{id}\\
P&\rightmap{id}&P\\
\end{matrix}$$ 
Then, we get $u=\alpha_P\zeta$, with $\zeta:=(h,u_2)$. 

In order to prove (2), notice that, since $v:U(P)\rightmap{}X$ is a morphism and $X\in {\cal P}^1(A)$, we get $v_2=\phi v_1$ and $v_2(P)\subseteq JP_2$. Thus,   $v_2$ is not a section. So, there is a morphism  $g:R_P\rightmap{}P_2$ such that $g\psi_P=v_2$ and we obtain the commutative diagram
$$\begin{matrix}
 P&\rightmap{id}&P\\
\shortlmapdown{id}&&\shortrmapdown{\psi_P}\\
P&\rightmap{\psi_P}&R_P\\
\shortlmapdown{v_1}&&\shortrmapdown{g}\\
P_1&\rightmap{\phi}&P_2\\
\end{matrix}$$ 
So, we get $v=\xi \beta_P$, with $\xi:=(v_1,g)$. 
\end{proof}

\begin{proposition}\label{P: caract de u con cokeru=0}
Let $u:X\rightmap{}X'$ be a morphism in ${\cal P}^1(A)$, where $X$ and $X'$ have underlying finite-length modules. Then, $\Coker(u)=0$ if and only if $u$ is a finite sum of morphisms which factor either by some object $S(P)$ or by some morphism $\gamma_P:L(P)\rightmap{}R(P)$, with $P$ indecomposable. 
\end{proposition}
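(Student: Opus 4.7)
\medskip

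\noindent\textbf{Proof plan.} The plan is to prove the two implications separately, both as bookkeeping on top of the two preceding lemmas. For the easy direction, I would note that $\Coker$ is a functor from ${\cal P}(\Lambda)$ to $\Lambda\g\Mod$, that $\Coker(S(\Lambda e_t))=0$ by definition, and that $\Coker(\gamma_t)=0$ because $\gamma_t=\beta_t\alpha_t$ factors through $U(\Lambda e_t)$, which itself has zero cokernel. Hence any finite sum of morphisms, each factoring through some $S(\Lambda e_t)$ or through some $\gamma_t$, vanishes under $\Coker$.

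For the converse, I would start from $\Coker(u)=0$ and apply Lemma~\ref{L: facts en P(Lambda)} to decompose $u=v+w$, where $v:X\rightmap{}X'$ factors through some $S(Q)$ and $w:X\rightmap{}X'$ factors through some $U(P)$, with $P$ and $Q$ finite-dimensional projective $\Lambda$-modules. Using the complete set of primitive idempotents $e_1,\ldots,e_n$ of $\Lambda$, I would write $Q\cong\bigoplus_i\Lambda e_{t_i}$ and $P\cong\bigoplus_j\Lambda e_{s_j}$ as finite direct sums, and observe the corresponding splittings $S(Q)\cong\bigoplus_i S(\Lambda e_{t_i})$ and $U(P)\cong\bigoplus_j U(\Lambda e_{s_j})$ in ${\cal P}(\Lambda)$. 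Splitting the factorization of $v$ along the components of $S(Q)$ exhibits $v$ as a finite sum of morphisms, each factoring through one of the $S(\Lambda e_{t_i})$, as required.

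The remaining task is to handle $w$. I would write $w=gf$ with $f=(f_j)_j:X\rightmap{}\bigoplus_jU(\Lambda e_{s_j})$ and $g=(g_j)_j:\bigoplus_jU(\Lambda e_{s_j})\rightmap{}X'$, so that $w=\sum_j g_jf_j$. Because $X,X'\in{\cal P}^1(\Lambda)$, Lemma~\ref{L: facts en P1(Lambda)}(1) factors each $f_j$ as $\alpha_{s_j}\tilde f_j$ for some $\tilde f_j:X\rightmap{}L(\Lambda e_{s_j})$, and Lemma~\ref{L: facts en P1(Lambda)}(2) factors each $g_j$ as $\tilde g_j\beta_{s_j}$ for some $\tilde g_j:R(\Lambda e_{s_j})\rightmap{}X'$. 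Composing yields $g_jf_j=\tilde g_j(\beta_{s_j}\alpha_{s_j})\tilde f_j=\tilde g_j\gamma_{s_j}\tilde f_j$, a factorization through $\gamma_{s_j}$, so each summand of $w$ has the required form.

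There is no serious obstacle; the argument is a short chain of invocations of the two previous lemmas. The one point that deserves care is the role of the hypothesis that $X$ and $X'$ have finite-dimensional underlying modules: without it, the projectives $P$ and $Q$ produced by Lemma~\ref{L: facts en P(Lambda)} need not be finitely generated, so the decomposition into indecomposable summands of the form $\Lambda e_t$ would be infinite and the resulting expression for $u$ would fail to be the finite sum demanded by the statement.
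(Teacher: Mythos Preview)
Your proposal is correct and follows essentially the same approach as the paper's own proof: both directions are handled exactly as you describe, by invoking Lemma~\ref{L: facts en P(Lambda)} to split $u=v+w$, decomposing $S(Q)$ and $U(P)$ into indecomposable summands via the primitive idempotents, and then applying both parts of Lemma~\ref{L: facts en P1(Lambda)} to factor each $U(\Lambda e_{s_j})$-component through $\gamma_{s_j}=\beta_{s_j}\alpha_{s_j}$. Your remark on the necessity of the finite-dimensionality hypothesis is also the point the paper relies on implicitly.
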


\begin{proof} Suppose that  $u=\sum_{i=1}^rg_i\gamma_{P_i}h_i+\sum_{j=1}^sg'_jh'_j$,  for some morphisms $h_i:X\rightmap{}L(P_i)$, $g_i:R(P_i)\rightmap{}X'$, $h'_j:X\rightmap{}S(P'_j)$, and $g'_j: S(P'_j)\rightmap{}X'$, with $P_i$ and  $P'_j$ indecomposable projectives. Then, applying the Coker functor to this sum,   we get
$$\Coker (u)=\sum_i\Coker(g_i)\Coker(\gamma_{P_i})\Coker (h_i)+\sum_j\Coker(g'_j)\Coker(h'_j)=0,$$
because each $\gamma_{P_i}$ factors through $U(P_i)$.  Conversely, assume that $\Coker(u)=0$. From (\ref{L: facts en P(Lambda)}), 
we know that $u=v+w$, where $v$ factors through some $U(P)$ and $w$ factors through some $S(P')$, where $P$ and $P'$ are finite-length projectives. Then, $U(P)=\bigoplus_{i=1}^r U(P_i)$ and $S(P')=\bigoplus_{j=1}^sS(P'_j)$, where $P_i$ and $P'_j$ are indecomposable projective $A$-modules.  Hence, $v=gh$, for some morphisms  
$h:X\rightmap{}\bigoplus_{i=1}^rU(P_i)$ and  $g: \bigoplus_{i=1}^rU(P_i)\rightmap{}X',$
and $w=g'h'$, for some morphisms 
$h':X\rightmap{}\bigoplus_{j=1}^sS(P'_j)$  and  $g': \bigoplus_{j=1}^sS(P'_j)\rightmap{}X'$. 
Let $\sigma_i$ denote the $i^{th}$-injection of $U(P_i)$ in $U(P)$ and $\pi_i$ the $i^{th}$-projection of $U(P)$ on  $U(P_i)$. Denote by $\sigma'_j$ and $\pi'_j$ the injections and projections associated to the direct sum $S(P')$. Then, we have 
$$v+w=\sum_{i=1}^rg\sigma_i\pi_i h+\sum_{j=1}^sg'\sigma'_j\pi'_jh'.$$  
From (\ref{L: facts en P1(Lambda)}), we get $\pi_i h=\alpha_{P_i}h_i$, for some $h_i:X\rightmap{}L(P_i)$, and $g\sigma_i= g_i\beta_{P_i}$, for some $g_i:R(P_i)\rightmap{}X'$. The proposition follows from this.  
\end{proof}

\begin{lemma}\label{L: L(Lambdaei), R(Lambdaej) inesc no isomorfos entre si} The objects  
$L(P)$ and $R(P)$ of ${\cal P}^1(A)$  are indecomposable, for any indecomposable projective $P$. 
\end{lemma}

\begin{proof} Notice first that $L(P)$ is a minimal projective presentation of the simple $A$-module $S=P/JP$, so it is indecomposable. The duality $(-)^*=\Hom_{A^{op}}(-,A^{op}): A^{op}\g \proj\rightarrow{}A\g\proj$ transforms  the indecomposable  $L(P)$ onto $R(P^*)$, so this last one is also indecomposable. Our lemma follows from this.  
\end{proof}

In the following definition we specialize to the case where $A$ is a finite-dimensional quotient of a quiver  algebra over an arbitrary field $k$.  

\begin{definition}\label{D: L(Lambdaex) R(Lambdaex)} Assume that $\Lambda=kQ/I$,   where $Q$ is a quiver, $k$ is any field, and $I$ is an admissible ideal of $kQ$. Consider the complete set $\{e_1,\ldots,e_n\}$ of primitive ortogonal idempotents of $\Lambda$, formed by the classes $e_t:=\tilde{\tau}_t$ of the trivial paths $\tau_t$ based at the vertices $t$ of $Q$. 
 
For any  vertex $t$, we consider  the object  $L(\Lambda e_t):=(L_{\Lambda e_t},\Lambda e_t, \phi_t)$ in ${\cal P}^1(\Lambda)$ given by the morphism 
$$\begin{matrix}L_{\Lambda e_t}:=\left[\bigoplus_{\alpha:t\rightarrow s }\Lambda e_s\right]&\rightmap{ \ \ \phi_t \ \ }&\Lambda e_t\end{matrix}$$
where $\phi_t=(\rho_\alpha)_{\alpha:t\rightarrow s}$ and each  $\rho_\alpha:\Lambda e_s\rightmap{}\Lambda e_t$
is the right multiplication by the arrow $\alpha$. For any vertex $t$ of $Q$, we have  the object  $R(\Lambda e_t):=(\Lambda e_t,R_{\Lambda e_t},\psi_t)$ in ${\cal P}^1(\Lambda)$ given by the morphism
$$\begin{matrix}\Lambda e_t&\rightmap{ \  \ \psi_t \ \ }&\left[\bigoplus_{\beta:s\rightarrow t} \Lambda e_s\right]=:R_{\Lambda e_t}\end{matrix}$$
where $\psi_t=(\rho_\beta)^t_{\beta:s\rightarrow t}$. 
For $t\in [1,n]$, we have the following morphisms in ${\cal P}(\Lambda)$:  
$$\alpha_t:=(\phi_t,id):L(\Lambda e_t)\rightmap{}U(\Lambda e_t) \hbox{ and }
\beta_t:=(id,\psi_t):U(\Lambda e_t)\rightmap{}R(\Lambda e_t);$$
and, in ${\cal P}^1(\Lambda)$, the morphism 
$\gamma_t:=\beta_t\alpha_t=(\phi_t,\psi_t):L(\Lambda e_t)\rightmap{}R(\Lambda e_t)$. 
\end{definition}

\begin{definition}\label{D: gen system for Coker Cok^1}  Given any artin algebra  $A$, we  say that the families  of morphisms 
$\{id_{\hueca{S}_t}\}_{t\in[1,n]} \hbox{ and } 
\{\gamma_t:\hueca{L}_t\rightmap{}\hueca{R}_t\}_{t\in [1,n]}$
\emph{are a kernel-generating system for $\Coker^1:{\cal P}^1(A)\rightmap{}A\g\Mod$} if we have:
 \begin{enumerate}
\item  $\hueca{L}_t$, $\hueca{R}_t$, $\hueca{S}_t$ are indecomposables objects of ${\cal P}^1(A)$ and $\gamma_t$ is a non-zero morphism in ${\cal P}^1(A)$, for all $t\in [1,n]$. 
\item $\hueca{L}_t\not\cong \hueca{L}_s$, $\hueca{R}_t\not\cong \hueca{R}_s$, and $\hueca{S}_t\not\cong \hueca{S}_s$, whenever $t\not=s$.
\item Given any morphism $u:X\rightmap{}X'$ in ${\cal P}^1(A)$, where $X$ and $X'$ have underlying finite-length  modules, we have: $\Coker(u)=0$ if and only if $u$ is a finite sum of morphisms which factor either by some object $\hueca{S}_i$ or by some morphism $\gamma_t$.  
\end{enumerate}
\end{definition}

\begin{remark}\label{R: isomorfos a generadores son generadores} 
Let $A$ be an artin algebra and 
suppose that  the families  
$$
\{id_{\hueca{S}_t}\}_{t\in [1,n]} \hbox{ and } 
\{\gamma_t:\hueca{L}_t\rightmap{}\hueca{R}_t\}_{t\in [1,n]}$$
  are a kernel-generating system for $\Coker^1:{\cal P}^1(A)\rightmap{}A\g\Mod$. 
 Then, any collection of isomorphisms  
$\hueca{L}_t\cong \hueca{L}'_t$, 
$\hueca{R}_t\cong \hueca{R}'_t$, and 
$\hueca{S}_t\cong \hueca{S}'_t$, for $t\in [1,n]$, in ${\cal P}^1(A)$ induces  families of  morphisms 
$\{id_{\hueca{S}'_t}\}_t$ and $\{\gamma'_t:\hueca{L}'_t\rightmap{}\hueca{R}'_t\}_t$ that are a kernel-generating system for  $\Coker^1:{\cal P}^1(A)\rightmap{}A\g\Mod$. 
\end{remark}

We say that a connected finite quiver $Q$ is \emph{trivial} iff it is only one vertex with no arrows. 

\begin{lemma}\label{R: vale Prop para las gammas(t)}  With the preceding notation, assume furthermore that $\Lambda=kQ/I$ for a  non-trivial connected quiver $Q$. Then, the families 
$$
\{id_{S(\Lambda e_t)}\}_{t\in [1,n]} \hbox{ and } 
\{\gamma_t:L(\Lambda e_t)\rightmap{}R(\Lambda e_t)\}_{t\in [1,n]}$$
 are a kernel-generating system for $\Coker^1:{\cal P}^1(\Lambda)\rightmap{}\Lambda\g\Mod$.
 \end{lemma}
 
\begin{proof}
 The morphisms $\phi_t$ and $\psi_t$ are special morphisms associated to the indecomposable projective $\Lambda$-module $P = \Lambda e_t$, as in (\ref{L: morfismos universales de P}), for $A=\Lambda$. Moreover, the morphisms $\alpha_t$, $\beta_t$, and $\gamma_t$ are the special morphisms associated to $P$ in (\ref{N: notation alpa, beta y gamma}). So, the characterization of the morphisms (\ref{P: caract de u con cokeru=0}) in ${\cal P}^1(\Lambda)$ with zero cokernel can be stated using these specific morphisms.

 Having in mind (\ref{L: L(Lambdaei), R(Lambdaej) inesc no isomorfos entre si}), we only need to show that $\gamma_t\not=0$, for all $t\in [1,n]$. If $\gamma_t=0$, then $\phi_t=0$ and $\psi_t=0$. Notice that $\phi_t=0$ implies that there are no arrows in $Q$ starting at the vertex $t$ and $\psi_t=0$ implies that there are no arrows in $Q$ ending at the vertex $t$. Thus $t$ is an isolated vertex in the connected quiver $Q$. So, 
 $Q$ is trivial,  a contradiction.  
\end{proof}

\begin{remark}\label{R: puedo tener R(Lambda et) cong L(Lambda es)} Notice that $L(\Lambda e_t)\not\cong R(\Lambda e_t)$,   if $\Lambda=kQ/I$ has infinite representation type and $Q$ is connected. 
 But it is possible to have $L(\Lambda e_t)\cong R(\Lambda e_s)$, when $t\not=s$. For example, for the algebra with the following quiver and radical square zero
$$1\rightmap{ \ \gamma \ }2\dobleflechava{a}{b}3,$$
we have  $L(\Lambda e_1)\cong R(\Lambda e_2)$. 

For the first remark, assume that the algebra $\Lambda=kQ/I$  has  infinite representation type.   If $L(\Lambda e_t)\cong R(\Lambda e_t)$, for some $t\in [1,n]$, then there is only one arrow with initial point $t$ and there is only one arrow with terminal point $t$. Since $Q$ is connected, this implies that $Q$ has only one vertex and a loop. This implies that $kQ/I$ is of finite representation type, contradicting our hypothesis. 
\end{remark}

From now until the end of this section, we work under the assumptions and with the notation of the following statement.    

\begin{lemma}\label{L: extensiones y algebras de carcaj}\label{R: realizacion de hat(Lambda) y }
Let $Q$ be a finite quiver and $I$ and admissible ideal of the quiver algebra $kQ$, where $k$ is any field. Assume that $K:k$ is any field extension of $k$. Then, the ideal 
$\langle I\rangle$ generated by $I$ is an admissible ideal of the quiver algebra $KQ$ and we have an isomorphism of $K$-algebras 
$$\theta: KQ/\langle I\rangle\rightmap{}(kQ/I)\otimes_kK$$
such that $\theta(\bar{e}_t)=e_t\otimes 1$, for any vertex $t$, and $\theta(\bar{\alpha})=\tilde{\alpha}\otimes 1$, for any arrow $\alpha$, where $\tilde{\alpha}$ is the class of $\alpha$ modulo $I$.   
\end{lemma}

\begin{proof} Since $I$ is an admissible ideal of $kQ$,  there is some $n$ such that every path $\gamma$ of $Q$ with length $\geq n$ belongs to $I$, so it also belongs to $\langle I\rangle$. Moreover, $I$ is generated by some linear combinations of  paths of length $\geq 2$, then so is $\langle I\rangle$. Hence,  $\langle I\rangle $ is an admissible ideal of $KQ$. 
We have the exact sequence of vector spaces
$$0\rightmap{}I\rightmap{}kQ\rightmap{}kQ/I\rightmap{}0.$$
Then, we have the commutative diagram with exact rows
$$\begin{matrix}0&\rightmap{}&I\otimes_kK&\rightmap{}&kQ\otimes_kK&\rightmap{}&(kQ/I)\otimes_kK&\rightmap{}&0\\
&&\shortlmapdown{\mu'}&&\shortlmapdown{\mu}&&\shortlmapdown{\overline{\mu}}&&\\
0&\rightmap{}&\langle I\rangle &\rightmap{}&KQ&\rightmap{}&KQ/\langle I\rangle&\rightmap{}&0,
\end{matrix}$$
where $\mu:kQ\otimes_kK\rightmap{}KQ$ denotes the multiplication map, which is a morphism of $K$-algebras and restricts to the morphism $\mu':I\otimes_kK\rightmap{}\langle I\rangle$. 

Now, consider the unique morphism of $K$-algebras $\phi:KQ\rightmap{}
kQ\otimes_k K$ such that $\phi(\tau_t)=\tau_t\otimes 1$, for any trivial path $\tau_t$ of $Q$, and $\phi(\alpha)=\alpha\otimes 1$, for any arrow $\alpha$ of $Q$. Then, $\mu$ is an isomorphism  with inverse $\phi$. It follows that $\mu'$ is also an isomorphism. Therefore, we have an induced isomorphism of $K$-algebras $\bar{\mu}$. The inverse $\theta$ of $\bar{\mu}$ is the isomorphism described in the statement of this lemma. 
\end{proof}

\begin{remark}\label{R: extension and radical} From the preceding statement it follows that, for $\Lambda=kQ/I$ and $\hat{\Lambda}=\Lambda\otimes_kK$, we have $(\rad\Lambda) \otimes_k K = \rad(\Lambda \otimes_k K)$  and that each $\hat{\Lambda}\hat{e}_t$ is an indecomposable projective $\hat{\Lambda}$-module, where $\hat{e}_t=e_t\otimes 1$, see \cite[(3.3)]{Kasjan}. 
\end{remark}
 
\begin{notation}\label{N: bar notation para KQ/(I)} Once we have a fixed field extension $K:k$, as before, we keep the notation described in (\ref{D: L(Lambdaex) R(Lambdaex)}) for the $k$-algebra $\Lambda$, and we denote with an overline the corresponding morphisms for the $K$-algebra $\bar{\Lambda}=KQ/\langle I\rangle$. Thus:  
$$L_{\bar{\Lambda}\bar{e}_t}=[\oplus_{\alpha:t\rightarrow  s}\bar{A}\bar{e}_s]\rightmap{\bar{\phi}_t}\bar{\Lambda}\bar{e}_t
\hbox{ \  , \  }
\bar{\Lambda}\bar{e}_t\rightmap{\bar{\psi}_t}R_{\bar{\Lambda}\bar{e}_t}=[\oplus_{\beta:s\rightarrow  t}\bar{A}\bar{e}_s]
$$
$$L(\bar{\Lambda}\bar{e}_t)\rightmap{\overline{\alpha}_t}U(\bar{\Lambda}\bar{e}_t)
\hbox{ \ , \  }
U(\bar{\Lambda}\bar{e}_t)\rightmap{\overline{\beta}_t}
R(\bar{\Lambda}\bar{e}_t)\hbox{  \ ,  \ }
\bar{\gamma}_t=\bar{\beta}_t\bar{\alpha}_t:L(\bar{\Lambda}\bar{e}_t)\rightmap{}R(\bar{\Lambda}\bar{e}_t)$$
\end{notation}

\begin{notation}\label{R: def alfa, beta, gama para Lambda(otimes)k(x)} Now, we concentrate on  the finite-dimensional $K$-algebra $\hat{\Lambda}:=\Lambda\otimes_kK$.  
  We use again the arrows of $Q$ in the following descriptions.  We have  the object  $L(\Lambda e_t)\otimes_kK$ in ${\cal P}^1(\hat{\Lambda})$  given by the morphism 
$$\begin{matrix}L_{\Lambda e_s}\otimes_k K=\left[\bigoplus_{\alpha:t\rightarrow s }\Lambda e_s\otimes_kK\right]&\rightmap{ \ \ \phi_t\otimes id \ \ }&\Lambda e_t\otimes_kK\end{matrix}$$
where $\phi_t=(\rho_\alpha)_{\alpha:t\rightarrow s}$ and each  $\rho_\alpha:\Lambda e_s\rightmap{}\Lambda e_t$
is the right multiplication by the arrow $\alpha$. 
We also have the object $R(\Lambda e_t)\otimes_kK$  in ${\cal P}^1(\hat{\Lambda})$  given by the morphism 
$$\begin{matrix}\Lambda e_t\otimes_kK&\rightmap{ \  \ \psi_t \otimes id\ \ }&\left[\bigoplus_{\beta:s\rightarrow t} \Lambda e_s\otimes_kK\right]=R_{\Lambda e_t}\otimes_kK\end{matrix}$$
where $\psi_t=(\rho_\beta)^t_{\beta:s\rightarrow t}$.  
Then, we have the following morphisms in ${\cal P}(\hat{\Lambda})$:  
$$\hat{\alpha}_t:=(\phi_t\otimes id,id):L(\Lambda e_t)\otimes_kK\rightmap{}U(\Lambda e_t)\otimes_kK, $$
$$\hat{\beta}_t:=(id,\psi_t\otimes id):U(\Lambda e_t)\otimes_k K\rightmap{}R(\Lambda e_t)\otimes_k K;$$
and, in ${\cal P}^1(\hat{\Lambda})$, the morphism 
$$\hat{\gamma}_t:=\hat{\beta}_t\hat{\alpha}_t=(\phi_t\otimes id,\psi_t\otimes id):L(\Lambda e_t)\otimes_kK\rightmap{}R(\Lambda e_t)\otimes_kK.$$
\end{notation}

\begin{remark}\label{R: comparing bar morphisms with morphisms tensor id} For the fixed field extension $K:k$, we have the $k$-algebra $\Lambda=kQ/I$ and the $K$-algebra 
$\bar{\Lambda}:=KQ/\langle I\rangle$. We will relate the morphisms of (\ref{D: L(Lambdaex) R(Lambdaex)}) with those of (\ref{N: bar notation para KQ/(I)}). For this, we consider the $K$-algebra  $\hat{\Lambda}:=(kQ/I)\otimes_k K$ and the isomorphism of $K$-algebras $\theta:\bar{\Lambda}\rightmap{}\hat{\Lambda}$ such that $\theta(\bar{e}_t)=e_t\otimes 1$ and $\theta(\bar{\alpha})=\tilde{\alpha}\otimes 1$, for any arrow $\alpha$ of $Q$, as in (\ref{L: extensiones y algebras de carcaj}). Then, $\theta$ has an inverse   $\tau: \hat{\Lambda}\rightmap{}\bar{\Lambda}$, which defines 
an isomorphism of categories $F_\tau:\bar{\Lambda}\g\mod\rightmap{}\hat{\Lambda}\g\mod$ obtained by restriction through $\tau$. 

Notice that the map $\theta$ induces an isomorphism of $\hat{\Lambda}$-modules $$\theta_t:F_\tau(\bar{\Lambda}\bar{e}_t)={\,}_\tau\bar{\Lambda}\overline{e}_t\rightmap{}\hat{\Lambda}(e_t\otimes 1)=\Lambda e_t\otimes_kK$$
such that, for each arrow $\alpha:t\rightmap{}s$ of $Q$, we have the following commutative diagram in $\hat{\Lambda}\g\mod$
$$\begin{matrix}
{\,}_\tau\bar{\Lambda}\overline{e}_s&\rightmap{\theta_s}&\hat{\Lambda}(e_s\otimes 1)&=&\Lambda e_s\otimes_kK\\
\shortlmapdown{\rho_\alpha}&&\shortlmapdown{\rho_{\alpha\otimes 1}}&&\shortlmapdown{\rho_\alpha\otimes id}\\
{\,}_\tau\bar{\Lambda}\overline{e}_t&\rightmap{\theta_t}&\hat{\Lambda}(e_t\otimes 1)&=&\Lambda e_t\otimes_k K\\
\end{matrix}$$
where the maps $\rho_\alpha$ denote right multiplication by $\alpha$ and 
$\rho_{\alpha\otimes 1}$ is right multiplication by $\alpha\otimes 1$.   
From this we obtain commutative squares in $\hat{\Lambda}\g\mod$
$$\begin{matrix}
\begin{matrix}
F_\tau(L_{\bar{\Lambda}\bar{e}_t})&
\rightmap{\bar{\phi}_t}&F_\tau(\bar{\Lambda}\bar{e}_t)\\
\shortlmapdown{\oplus \theta_s}&&\shortlmapdown{\theta_t}\\
L_{\Lambda e_t}\otimes_kK &\rightmap{\phi_t\otimes id}&\Lambda e_t\otimes_k K\\
\end{matrix}
\hbox{ \ \ and \ \  }
\begin{matrix}
F_\tau(\bar{\Lambda}\bar{e}_t)
&
\rightmap{\bar{\psi}_t}&F_\tau(R_{\bar{\Lambda}\bar{e}_t})\\
\shortlmapdown{\theta_t}&&\shortlmapdown{\oplus \theta_s}\\
\Lambda e_t\otimes_kK&\rightmap{\psi_t\otimes id}&R_{\Lambda e_t}\otimes_k K\\
\end{matrix}
\end{matrix}$$ 
where the vertical morphisms are isomorphisms. The isomorphism $\tau$ induces an isomorphism of categories
 ${\cal P}(\bar{\Lambda})\rightmap{}{\cal P}(\hat{\Lambda})$ which we denote with the same symbol $F_\tau$. Then, with the notation of (\ref{R: def alfa, beta, gama para Lambda(otimes)k(x)}), we get isomorphisms in ${\cal P}(\hat{\Lambda})$: 
$$F_\tau(L(\bar{\Lambda}\bar{e}_t))\cong  
L(\Lambda e_t)\otimes_k K \hbox{ \ \ and \ \ }
F_\tau(R(\bar{\Lambda}\bar{e}_t))\cong  
R(\Lambda e_t)\otimes_k K. 
$$
 Similarly, the isomorphisms $\theta_t$ induce the following isomorphisms in ${\cal P}(\hat{\Lambda})$ 
$$F_\tau(S(\bar{\Lambda}\bar{e}_t))\cong S(\Lambda e_t)\otimes_k K \hbox{ \ \ \ and \ \ \ } F_\tau(U(\bar{\Lambda}\bar{e}_t))\cong U(\Lambda e_t)\otimes_k K.$$ Moreover, with these isomorphisms, the following diagrams  commute 
$$\begin{matrix}F_\tau(L(\bar{\Lambda}\bar{e}_t))&\rightmap{\bar{\alpha}_t}&F_\tau(U(\bar{\Lambda}\bar{e}_t))&\rightmap{\bar{\beta}_t}&F_\tau(R(\bar{\Lambda}\bar{e}_t))\\
\shortlmapdown{\cong}&&\shortlmapdown{\cong}&&\shortlmapdown{\cong}\\
L(\Lambda e_t)\otimes_kK&\rightmap{\hat{\alpha}_t}&U(\Lambda e_t)\otimes_kK&\rightmap{\hat{\beta}_t}&R(\Lambda e_t)\otimes_kK\\
\end{matrix}
$$
Therefore, the morphisms $F_\tau(\bar{\alpha}_t)$, $F_\tau(\bar{\beta}_t)$, and  $F_\tau(\bar{\gamma}_t)$ are, respectively, isomorphic to the morphisms a $\hat{\alpha}_t$,  $\hat{\beta}_t$, and $\hat{\gamma}_t$ in   ${\cal P}(\hat{\Lambda})$. 
\end{remark}

\begin{lemma}\label{L: extendidos generadores de Coker  canónicos son generadores} If the algebra $\Lambda=kQ/I$ has 
 a non-trivial connected quiver $Q$,  then,  the families  
$$
\{id_{S(\Lambda e_t)\otimes_kK}\}_{t\in [1,n]}\hbox{ and } 
\{\hat{\gamma}_t:L(\Lambda e_t)\otimes_kK\rightmap{}R(\Lambda e_t)\otimes_kK\}_{t\in [1,n]}$$
are a kernel-generating system for $\Coker^1:{\cal P}^1(\hat{\Lambda})\rightmap{}\hat{\Lambda}\g\Mod$. 
\end{lemma}

\begin{proof} The families 
$
\{id_{S(\bar{\Lambda}\bar{e}_t)}\}_{t\in [1,n]}
\hbox{ and } 
\{\bar{\gamma}_t:L(\bar{\Lambda} \bar{e}_t)\rightmap{}R(\bar{\Lambda} \bar{e}_t)\}_{t\in [1,n]}$ are a kernel-generating system for $\Coker^1:{\cal P}^1(\bar{\Lambda})\rightmap{}\bar{\Lambda}\g\Mod$, because (\ref{R: vale Prop para las gammas(t)}) holds for the $K$-algebra $\bar{\Lambda}$. Consider their images $
\{id_{F_\tau(S(\bar{\Lambda}\bar{e}_t))}\}_{t\in [1,n]}
\hbox{ and } 
\{F_\tau(\bar{\gamma}_t):F_\tau(L(\bar{\Lambda} \bar{e}_t))\rightmap{}F_\tau(R(\bar{\Lambda} \bar{e}_t))\}_{t\in [1,n]}$ under the equivalence 
 $F_\tau:{\cal P}^1(\bar{\Lambda})\rightmap{}{\cal P}^1(\hat{\Lambda})$. These images are a kernel-generating system for $\Coker^1:{\cal P}^1(\hat{\Lambda})\rightmap{}\hat{\Lambda}\g\Mod$ because  we have the commutative diagram of functors 
 $$\begin{matrix}{\cal P}^1(\bar{\Lambda})&\rightmap{\Coker}&\bar{\Lambda}\g\Mod\\
 \shortlmapdown{F_\tau}&&\shortlmapdown{F_\tau}\\
 {\cal P}^1(\hat{\Lambda})&\rightmap{\Coker}&\hat{\Lambda}\g\Mod.\\
 \end{matrix}$$
Indeed the first vertical functor $F_\tau$  maps morphisms $u$ in ${\cal P}^1(\bar{\Lambda})$ between objects with finite-length underlying modules to with $\Coker(u)=0$ onto the corresponding morphisms $F_\tau(u)$ in ${\cal P}^1(\hat{\Lambda})$.  
 Then, from (\ref{R: comparing bar morphisms with morphisms tensor id}) and (\ref{R: isomorfos a generadores son generadores}), our claim follows. 
\end{proof}

\begin{remark}\label{R: extendidos de generadores son generadores} 
Assume that the families 
$
\{id_{\hueca{S}_t}\}_{t\in [1,n]}\hbox{ and } 
\{\gamma'_t:\hueca{L}_t\rightmap{}\hueca{R}_t\}_{t\in [1,n]}$
 are produced from the families 
$
\{id_{S(\Lambda e_t)}\}_{t\in [1,n]}\hbox{ and }
\{\gamma_t:L(\Lambda e_t)\rightmap{}R(\Lambda e_t)\}_{t\in [1,n]}$ by the application of (\ref{R: isomorfos a generadores son generadores}) to some given  isomorphisms 
 $L(\Lambda e_t)\cong \hueca{L}_t$, $R(\Lambda e_t)\cong \hueca{R}_t$, and $S(\Lambda e_t)\cong \hueca{S}_t$. 
We know that these families are kernel-generating systems for $\Coker^1:{\cal P}^1(\Lambda)\rightmap{}\Lambda\g\Mod$. 

Then the families 
$
\{id_{\hueca{S}_t\otimes_kK}\}_t \hbox{ and } 
\{\gamma'_t\otimes id:\hueca{L}_t\otimes_kK\rightmap{}\hueca{R}_t\otimes_kK\}_t$
 are a kernel-generating  system  for $\Coker^1:
{\cal P}^1(\hat{\Lambda})\rightmap{}\hat{\Lambda}\g\Mod$. 
Indeed, the given isomorphisms induce  isomorphisms $S(\Lambda e_t)\otimes_kK\cong \hueca{S}_t\otimes_kK$ and commutative diagrams  
$$\begin{matrix}L(\Lambda e_t)\otimes_kK&\rightmap{\gamma_t\otimes id}&R(\Lambda e_t)\otimes_kK\\
\shortlmapdown{\cong}&&\shortrmapdown{\cong}\\ \hueca{L}_t\otimes_k K&\rightmap{\gamma'_t\otimes id}&\hueca{R}_t\otimes_kK.\\
\end{matrix}$$
Then, our claim follows from (\ref{R: isomorfos a generadores son generadores}) and (\ref{L: extendidos generadores de Coker  canónicos son generadores}). 
\end{remark}

\section{Setting for reduction to minimal ditalgebras}

From now on, $k$ is an algebraically closed field, $\Lambda=Q/I$ is a tame  finite dimensional basic indecomposable $k$-algebra, and $1=\sum_{t=1}^ne_t$ is the canonical decomposition of the unit of $\Lambda$ as sum of primitive orthogonal idempotents.
In the following, we recall some basic notions and  fundamental facts relating $\Lambda\g\Mod$ with the category of modules ${\cal B}\g\Mod$, over some minimal ditalgebra ${\cal B}$. 

\begin{remark}\label{L: ditalg minimales} Recall that a minimal ditalgebra ${\cal B}$ is a layered ditalgebra with triangular seminested layer $(R,W)$ with $W_0=0$, see \cite[\S23]{BSZ}. Thus $R$ is a \emph{minimal algebra}, that is a  finite product of indecomposable  algebras $R=\prod_{z\in \frak{P}}R_z$.
The factors $R_z$ are of two different types as follows: 
we have  $\frak{P}=\frak{M}\uplus \frak{Z}$ with  
$R_z\cong k[x]_{h_z}$,  for 
$z\in \frak{M}$, and $R_z\cong k$, for $z\in \frak{Z}$.
 Moreover, if $1=\sum_{z\in \frak{P}}\frak{f}_z$ is the corresponding decomposition of the unit in $R$ as a sum of primitive orthogonal idempotents,  we consider the factors $R_z$ as subspaces of $R$ through the isomorphisms $R_z\cong k[x]_{h_z}\frak{f}_z$, for $z\in \frak{M}$, and $R_z\cong k\frak{f}_z$, for $z\in \frak{Z}$.  

The elements of $\frak{P}$ are called \emph{the points of} ${\cal B}$ and the elements of $\frak{M}$ are called the \emph{marked points of} ${\cal B}$. 

Given $z\in \frak{M}$, the indecomposable finite-dimensional $k[x]_{h_z}$-modules are, up to isomorphism, of the form $k[x]_{h_z}/(x-\lambda)^m$, where $m\in \hueca{N}$ and $\lambda\in k$ with $h_z(\lambda)\not=0$. In this situation, $h_z$ acts invertibly on the  $k[x]$-module $k[x]/(x-\lambda)^m$, so the latter is naturally a $k[x]_{h_z}$-module, an $R_z$-module, and an $R$-module. We have an isomorphism of $R_z$-modules  $k[x]_{h_z}/(x-\lambda)^m\cong k[x]/(x-\lambda)^m=[k[x]/(x-\lambda)^m]\frak{f}_z$. 
So, the indecomposable finite-dimensional ${\cal B}$-modules are, up to isomorphism, of the form  
$[k[x]/(x-\lambda)^m]\frak{f}_z$, with $z\in \frak{M}$, $\lambda\in k$ with  $h_z(\lambda)\not=0$, and $m\in \hueca{N}$, which we call the \emph{marked indecomposables}, and $S_{\frak{f}_z}=k\frak{f}_z$, with $z\in \frak{Z}$, see \cite[(32.3)]{BSZ}. The \emph{marked simple  ${\cal B}$-modules based at $z$} are     $S_{z,\lambda}=[k[x]/(x-\lambda)]\frak{f}_z$, for $z\in \frak{M}$ and $h_z(\lambda)\not=0$. The generic ${\cal B}$-modules are, up to isomorphism,  the fields of fractions $K_z$ of $R\frak{f}_z$, for $z\in \frak{M}$, with their  natural structure of $R\frak{f}_z$-module. Thus, $\frak{f}_z$ acts centrally on $K_z$, and we can identify it with $k(x)\frak{f}_z$.  We say that a ${\cal B}$-module $N$ is \emph{based at the point $z$} if $\frak{f}_zN=N$. 
\end{remark}

The argument for the proof of (\ref{T: gen-bricks vs bricks}) relies  on the following fundamental fact, 
see \cite{CB2} and \cite[(29.9)]{BSZ}. 

\begin{theorem}\label{T: gens for tame algs}
 Let $\Lambda$ be a finite-dimensional tame algebra  and let $d\in \hueca{N}$. Then, there is a minimal ditalgebra ${\cal B}$ and a full functor  which preserves indecomposables and reflects isomorphism classes 
 $$T:{\cal B}\g\Mod\rightmap{}\Lambda\g\Mod,$$ 
 such that its composition with the canonical embedding $L_{\cal B}:B\g\Mod\rightmap{}{\cal B}\g\Mod$ is isomorphic to the tensor product $Z\otimes_B-$ by some $\Lambda\g B$-bimodule $Z$, where $B=R$ denotes the subalgebra of degree-zero elements of the underlying tensor algebra of ${\cal B}$. Moreover, $Z$ is finitely generated by the right and such that:
\begin{enumerate}
\item Every generic $\Lambda$-module with endolength $\leq d$ is isomorphic to $Z\otimes_BK$, for some generic ${\cal B}$-module $K$ of ${\cal B}$.
\item  Every $\Lambda$-module of dimension $\leq d$ is isomorphic to $Z\otimes_BN$ for some $N\in {\cal B}\g\mod$.  
\end{enumerate}
\end{theorem}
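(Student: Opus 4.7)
The plan is to start from Drozd's ditalgebra ${\cal D}$ associated to $\Lambda$, together with the equivalence $\Xi_\Lambda:{\cal D}\g\Mod\rightmap{}{\cal P}^1(\Lambda)$, and to apply the standard reduction algorithm for layered ditalgebras in order to produce a minimal ditalgebra ${\cal B}$ and a composition of reduction functors $H:{\cal B}\g\Mod\rightmap{}{\cal D}\g\Mod$. The functor is then defined as $T:=\Coker\circ\Xi_\Lambda\circ H$. Because $\Lambda$ is tame and the bound $d$ is fixed, the reduction procedure (built from edge reductions, absorptions, regularizations and unravellings) should terminate after finitely many steps with a minimal ditalgebra ${\cal B}$ that sees every $\Lambda$-module of dimension $\leq d$ and every generic $\Lambda$-module of endolength $\leq d$.

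Next, I would verify the categorical properties of $T$. Each reduction functor is full and faithful and preserves indecomposability, so the composition $H$ enjoys the same properties; and $\Xi_\Lambda$ is an equivalence. The functor $\Coker$ is full; although it is not faithful, its failure of faithfulness is controlled precisely by the special morphisms studied in the preceding section. Since the image of $\Xi_\Lambda\circ H$ lies in the full subcategory of ${\cal P}^1(\Lambda)$ consisting of minimal projective presentations, two non-isomorphic objects there have non-isomorphic cokernels, and an indecomposable such object produces an indecomposable cokernel. Thus $T$ is full, preserves indecomposability, and reflects isomorphism classes.

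To exhibit $Z$, I would use that every reduction functor, when restricted to the underlying algebra of its layer, is given by tensoring with a suitable bimodule. Composing these bimodules step by step shows that $H\circ L_{\cal B}$ is isomorphic to tensoring with a ${\cal D}\g B$-bimodule; passing through $\Xi_\Lambda$ and $\Coker$ transforms this bimodule into a single $\Lambda\g B$-bimodule $Z$. Finite generation of $Z$ as a right $B$-module follows because only finitely many indecomposable projective $\Lambda$-modules occur in the presentation used. The naturality of each step yields the required isomorphism $T\circ L_{\cal B}\cong Z\otimes_B-$.

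The hard part is establishing (1) and (2): showing that every finite-dimensional $\Lambda$-module of dimension $\leq d$, and every generic $\Lambda$-module of endolength $\leq d$, lies in the essential image of $T$. This is where tameness enters genuinely, by guaranteeing that after finitely many reductions no ``wild'' piece remains below the chosen bound, so that every module in question is captured by the minimal ditalgebra ${\cal B}$. Controlling the termination uniformly in $d$, and bookkeeping the passage from generic ${\cal B}$-modules $Q_z=k(x)\frak{f}_z$ to generic $\Lambda$-modules with the correct endolength, is the most delicate technical point, and corresponds to Crawley-Boevey's argument reworked ditalgebraically in \cite{BSZ}(29.9).
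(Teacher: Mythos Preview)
The paper does not prove this theorem at all; it is stated as a known result with the citation ``see \cite{CB2} and \cite{BSZ}(29.9)'', and the subsequent Remark~(\ref{R: first choice of min ditalg}) only unpacks the shape of $T$ as the composition $\Coker\circ\Xi_\Lambda\circ H$ in order to use it. Your outline is consistent with that cited approach and with what the remark records.

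That said, one step in your sketch is not right as written. You argue that $T$ preserves indecomposables and reflects isomorphism classes because ``the image of $\Xi_\Lambda\circ H$ lies in the full subcategory of ${\cal P}^1(\Lambda)$ consisting of minimal projective presentations''. This is false in the very setting the paper works in: the simples $S_{\frak{f}_{z_t}}$ are sent by $\Xi_\Lambda H$ to $S(\Lambda e_t)=(\Lambda e_t,0,0)$, and the simples $S_{\frak{f}_{l_t}}$, $S_{\frak{f}_{r_t}}$ to $L(\Lambda e_t)$, $R(\Lambda e_t)$, none of which are minimal projective presentations of nonzero modules (the first has cokernel $0$; the last has a redundant first term). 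So several non-isomorphic indecomposable ${\cal B}$-modules are sent to $0$ by $T$, and your ``non-isomorphic presentations give non-isomorphic cokernels'' argument collapses. The actual proof in \cite{BSZ}\S22 and \S29 does not proceed this way: the properties of $\Coker\circ\Xi_\Lambda$ are established directly (it is full and dense, its kernel on morphisms is identified, and indecomposability/isomorphism reflection are handled for nonzero images via the structure of ${\cal P}^1(\Lambda)$), rather than by claiming the image consists of minimal presentations. If you want a correct sketch, replace that paragraph by an appeal to the explicit description of the kernel of $\Coker$ on ${\cal P}^1(\Lambda)$ (as in \cite{BSZ}(22.18) and related results), which is exactly what the paper's \S2 is reproving in the special cases it needs.
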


\begin{remark}\label{R: first choice of min ditalg} Assume that $G$ is a  generic $\Lambda$-module  with endolength $d_0$. Given $d\in \hueca{N}$, the functor $T$ of the last theorem is obtained as a composition of functors 
$${\cal B}\g\Mod\rightmap{H}{\cal D}\g\Mod\rightmap{\Xi_\Lambda}{\cal P}^1(\Lambda)\rightmap{ \ \Coker \ }\Lambda\g\Mod,$$
where ${\cal D}$ is the Drozd's ditalgebra of $\Lambda$,  $\Xi_\Lambda$ is the usual equivalence, see \cite[\S19]{BSZ}, and $H$ is a composition of  full and faithful reduction functors. There, $H$ is such that any $N\in {\cal D}\g\Mod$ with endolength $\leq (1+\dim_k\Lambda)\times d$ or with $\dim_kN\leq d$, are of the form $H(M)\cong N$, see the prof of \cite[(29.9)]{BSZ}. Using \cite[(29.5)]{BSZ}, we see that taking $d>d_0$ big enough, we get that $L(\Lambda e_t)$,  $R(\Lambda e_t)$, and $S(\Lambda e_t)$, for $t\in [1,n]$,  are isomorphic to some objects in the image of $\Xi_\Lambda H$, and also $G\cong Z\otimes_{R_{z_0}} k(x)\frak{f}_{z_0}$, for some marked point $z_0$ of ${\cal B}$.  So, $\Xi_\Lambda H(L_{l_t})\cong L(\Lambda e_t)$, $\Xi_\Lambda H(L_{r_t})\cong R(\Lambda e_t)$, $\Xi_\Lambda H(L_{z_t})\cong S(\Lambda e_t)$, for appropriate $L_{l_t}, L_{r_t}, L_{z_t}\in {\cal B}\g\mod$, and $\Xi_\Lambda H(k(x)\frak{f}_{z_0})\cong \hat{G}$, where $\Coker (\hat{G})\cong G$. 

The bimodule $Z$ of the last theorem is $\Coker\Xi_\Lambda H(B)$. 
From (\ref{T: gens for tame algs}), we know that the generic $\Lambda$-module $G$ has the form $G\cong Z\otimes_BK_{z_0}$, for some generic ${\cal B}$-module $K_{z_0}=k(x)\frak{f}_{z_0}$ of ${\cal B}$. Following Crawley-Boevey's argument for the proof of (\ref{T: CB-realizations}), we know that the $\Lambda\g B\frak{f}_{z_0}$-bimodule $M:=Z\frak{f}_{z_0}$ is a realization  of $G\cong Z\otimes_BK_{z_0}\cong M\otimes_{B\frak{f}_{z_0}}k(x)\frak{f}_{z_0}$, over the rational algebra $\Gamma:=B\frak{f}_{z_0}$. 
\end{remark}

\begin{remark}\label{R: variar parametrizaciones} From now on, 
$G$ denotes a fixed generic $\Lambda$-module  with endolength $d_0$. Assume that $T'$ is a functor  given as a composition 
of functors 
$${\cal B}'\g\Mod\rightmap{H'}{\cal D}\g\Mod\rightmap{\Xi_\Lambda}{\cal P}^1(\Lambda)\rightmap{ \ \Coker \ }\Lambda\g\Mod,$$
where ${\cal B}'$ is a minimal ditalgebra, $H'$ is a composition of full and faithful reduction functors. Suppose that 
 there are $L_{l_t}, L_{r_t}, L_{z_t}\in {\cal B}'\g\mod$, such that $\Xi_\Lambda H'(L_{l_t})\cong L(\Lambda e_t)$, $\Xi_\Lambda H'(L_{r_t})\cong R(\Lambda e_t)$, and $\Xi_\Lambda H(L_{z_t})\cong S(\Lambda e_t)$, for $t\in [1,n]$. Furthermore, assume that there is a marked point $z_0$ of ${\cal B}'$ such that $\Xi_\Lambda H'(k(x)\frak{f}_{z_0})\cong \hat{G}$, where $\Coker (\hat{G})\cong G$. Then, as before, we get a $\Lambda\g B'$-bimodule $Z'=\Coker \Xi_\Lambda H'(B')$ and a realization $M'=Z'\frak{f}_{z_0}$ of $G$. 
 
 In the following, we will make some convenient adjustments to the minimal ditalgebra ${\cal B}$ of (\ref{R: first choice of min ditalg}), to obtain another minimal ditalgebra ${\cal B}'$ satisfying the requirements specified in the preceding paragraph. This, will permit us to 
study the relation of morphisms between relevant  indecomposables in ${\cal B}'\g\Mod$ and ${\cal P}^1(\Lambda)$ using the new realization $M'$ of $G$. 

We start by fixing the minimal ditalgebra ${\cal B}$ and the context  presented in (\ref{R: first choice of min ditalg}), including the realization $M$ of $G$  which appeared there, but we will adjust it conveniently in the next paragraphs.  
\end{remark}

In the following, if ${\cal L}$ is a finite set of non-isomorphic indecomposables in ${\cal B}\g\mod$, where ${\cal B}$ is a minimal ditalgebra, we denote by $m({\cal L})$ the number of marked indecomposables in ${\cal L}$. In the next lemma, ${\cal B}^X$ denotes the ditalgebra constructed from ${\cal B}$, using  an admissible $R$-module $X$, see \cite[\S12]{BSZ}.

\begin{lemma}\label{L reemplazando marcados} Let ${\cal B}$ be a minimal ditalgebra and ${\cal L}$ a finite set of non-isomorphic indecomposables in ${\cal B}\g\mod$ with $m({\cal L})\geq 1$. Then there is a complete admissible $R$-module $X$ such that  ${\cal B}^X$ is a minimal ditalgebra and the associated  
full and faithful functor 
$$F^X: {\cal B}^X\g\Mod\rightmap{}{\cal B}\g\Mod$$ 
satisfies: 
\begin{enumerate}
\item  There is a finite set ${\cal L}^X$ of indecomposables in ${\cal B}^X\g\mod$ with $m({\cal L}^X)<m({\cal L})$, such that $F^X$ induces a bijection from ${\cal L}^X$ onto ${\cal L}$.
\item There is a bijection between marked points $\sigma:\frak{M}\rightmap{}\frak{M}^X$ such that:  
 $$F^X(k(x)\frak{f}_{\sigma(z)})\cong k(x)\frak{f}_z, \hbox{ for all } z\in \frak{M}.$$ 
\end{enumerate} 
\end{lemma}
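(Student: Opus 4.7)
The plan is to reduce $m({\cal L})$ by at least one through a single admissible-module reduction targeted at a chosen marked indecomposable. Since $m({\cal L})\geq 1$, pick a marked $L_0\in {\cal L}$, which by (\ref{L: ditalg minimales}) has the form $L_0\cong k[x]/(x-\lambda_0)^{m_0}\frak{f}_{z_0}$ for some $z_0\in\frak{M}$, some $\lambda_0\in k$ with $h_{z_0}(\lambda_0)\not=0$, and some $m_0\geq 1$. I would then build $X=\bigoplus_{z\in\frak{P}}X_z$ by letting $X_z=R_z$ whenever $z\not=z_0$, and by defining
$$X_{z_0}:=\Bigl(\bigoplus_{i=1}^{m_0}k[x]/(x-\lambda_0)\,\frak{f}_{z_0}\Bigr)\;\oplus\; k[x]_{h_{z_0}(x-\lambda_0)}\frak{f}_{z_0}.$$
Since every $X_z$ is a direct sum of a localization of $R_z$ with finitely many $R_z$-simples supported outside the zero set of $h_z$, this $X$ is a complete admissible $R$-module in the sense of \cite{BSZ}, and the algebra $R^X=\End_R(X)^{op}$ decomposes, up to Morita equivalence, as the product of the unchanged $R_z$ for $z\not=z_0$, the further localization $k[x]_{h_{z_0}(x-\lambda_0)}$, and $m_0$ copies of $k$. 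Hence ${\cal B}^X$ is again a minimal ditalgebra, equipped with a full and faithful reduction functor $F^X:{\cal B}^X\g\Mod\rightmap{}{\cal B}\g\Mod$.

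To establish (2), define $\sigma:\frak{M}\rightmap{}\frak{M}^X$ by $\sigma(z):=z$ for $z\not=z_0$ and by letting $\sigma(z_0)$ be the marked point of ${\cal B}^X$ produced by the summand $k[x]_{h_{z_0}(x-\lambda_0)}\frak{f}_{z_0}$; the standard compatibility of admissible-module reductions with fraction fields then yields $F^X(k(x)\frak{f}_{\sigma(z)})\cong k(x)\frak{f}_z$ for every $z\in\frak{M}$. For (1), the fact that $F^X$ is full and faithful ensures that each $L\in {\cal L}$ lifts, uniquely up to isomorphism, to an indecomposable $\tilde L\in {\cal B}^X\g\mod$ with $F^X(\tilde L)\cong L$. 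Any such $L$ which is marked based at $z\not=z_0$ lifts to a marked indecomposable based at $\sigma(z)$, and any marked indecomposable based at $z_0$ different from $L_0$ (necessarily of the form $k[x]/(x-\lambda')^{m'}\frak{f}_{z_0}$ with $\lambda'\not=\lambda_0$) lifts to a marked indecomposable based at $\sigma(z_0)$, since $h_{z_0}(x-\lambda_0)$ still fails to vanish at $\lambda'$. Crucially, however, $L_0$ itself lifts to an indecomposable $\tilde L_0$ supported only on the new trivial points produced by the $k[x]/(x-\lambda_0)\frak{f}_{z_0}$-summands (reassembled into the Jordan-block structure through the surviving arrows of $W_1^X$), so $\tilde L_0$ is not marked. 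Setting ${\cal L}^X:=\{\tilde L\mid L\in {\cal L}\}$ yields $m({\cal L}^X)\leq m({\cal L})-1$.

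The main obstacle is the bookkeeping needed to verify that ${\cal B}^X$ really is a minimal ditalgebra with a triangular seminested layer and that the described lift $\tilde L_0$ exists with the asserted support. The delicate point is the case $m_0>1$: one must confirm that the surviving arrows of $W_1^X$ suffice to reassemble $L_0$ from $m_0$ trivial simples into a single indecomposable, and that no other element of ${\cal L}$ accidentally collapses in a way that would undermine the bijection ${\cal L}^X\to {\cal L}$. Both facts reduce to explicit calculations with the reduction functor and the classification of indecomposables over minimal ditalgebras in \cite{BSZ}(\S23, 32.3).
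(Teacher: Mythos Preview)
Your construction has a genuine gap when $m_0>1$, and it is precisely the ``delicate point'' you flag at the end. The $m_0$ summands $k[x]/(x-\lambda_0)\,\frak{f}_{z_0}$ you place in $X_{z_0}$ are all isomorphic as $R$-modules, so $\End_R(X)^{op}$ picks up an $M_{m_0}(k)$ block; up to Morita equivalence this contributes a single copy of $k$, not $m_0$ copies. More importantly, since ${\cal B}$ is minimal one has $W_0^X=0$, so the objects of ${\cal B}^X\g\Mod$ are exactly the $S$-modules and $F^X$ acts on objects as $X\otimes_S-$. The bimodule $W_1^X$ contributes only to \emph{morphisms}, never to the module structure; there are no ``surviving arrows of $W_1^X$'' available to glue $m_0$ trivial simples into a Jordan block. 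Consequently the essential image of $F^X$ consists of those ${\cal B}$-modules whose underlying $R$-module lies in $\add(X)$. With your $X$, the indecomposable $R$-module $k[x]/(x-\lambda_0)^{m_0}$ (for $m_0>1$) is neither a summand of a power of $k[x]/(x-\lambda_0)$ nor a module over the further localization, so $L_0$ is simply not in the essential image of $F^X$ and admits no lift $\tilde L_0$ at all.

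The paper repairs this by taking instead
$$X_{z_0}=(R\frak{f}_{z_0})_{x-\lambda_0}\oplus\Bigl(\bigoplus_{s=1}^{r}k[x]/(x-\lambda_0)^{s}\Bigr),$$
i.e.\ one summand for each power $s=1,\ldots,r$, where $r$ is chosen \emph{maximal} among the exponents of marked indecomposables in ${\cal L}$ based at $z_0$. These summands are pairwise non-isomorphic, so each produces a genuine new non-marked point $\frak{f}''_s$, and $L_0\cong X\otimes_S k\frak{f}''_{m_0}$ lifts to the simple at that point. The maximality of $r$ also guarantees that any other marked member of ${\cal L}$ at $(z_0,\lambda_0)$ lifts (to another such simple), which your choice of an arbitrary $L_0$ would not ensure.
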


\begin{proof} Take any marked indecomposable $N\in {\cal L}$, so there are a marked point $z_0\in \frak{M}$, some $\lambda_0\in k$ with $h_{z_0}(\lambda_0)\not=0$, and $r\in \hueca{N}$, with $$N\cong [k[x]/(x-\lambda_0)^r]\frak{f}_{z_0}.$$
We can choose $N\in {\cal L}$ such that 
$$r=\max\{\dim_k \hat{N}\mid \hat{N}\in {\cal L} \hbox{ is marked and based at }z_0\}.$$
Consider the complete admissible $B$-module 
$$X:=R(1-\frak{f}_{z_0})\oplus (R\frak{f}_{z_0})_{x-\lambda_0}\oplus (\oplus_{s=1}^rk[x]/(x-\lambda_0)^s).$$  
Then, we have the minimal ditalgebra ${\cal B}^X$ with layer $(S,W^X)$ such that 
$$S=\End_R(X)^{op}/\rad\End_R(X)^{op} =R(1-\frak{f}_{z_0})\times (R\frak{f}_{z_0})_{x-\lambda_0}\times (\times_{s=1}^r k\frak{f}''_s),$$
with marked points $\frak{M}^X=\frak{M}$ and non-marked points $\frak{Z}^X=\frak{Z}\bigcup\{1,\ldots,r\}$, where each idempotent $\frak{f}''_s$ corresponds to the indecomposable direct summand $k[x]/(x-\lambda_0)^s$ of $X$. Denote by  $\frak{f}_{z_0}'$  the idempotent corresponding to the direct factor $(R\frak{f}_{z_0})_{x-\lambda_0}$ of $S$. We have chosen a notation such that we can take $\sigma$ as the identity map.

The functor $F^X:{\cal B}^X\g\Mod\rightmap{}{\cal B}\g\Mod$ is full and faithful because $X$ is  admissible and complete, see \cite[\S12 and \S13]{BSZ}. 

Consider the finite set ${\cal L}^X$ of ${\cal B}$-modules in ${\cal B}^X\g\mod$ formed by the indecomposables $M\frak{f}_z$ in ${\cal L}$ based at points $z\not= z_0$; the indecomposables of the form $k\frak{f}''_s$ where $[k[x]/(x-\lambda_0)^s]\frak{f}_{z_0}$ belongs to ${\cal L}$; and the indecomposables  of the form $[k[x]/(x-\lambda)^m]\frak{f}'_{z_0}$, with $m\in \hueca{N}$ and $\lambda\not=\lambda_0$, such that $[k[x]/(x-\lambda)^m]\frak{f}_{z_0}$ belongs to ${\cal L}$. So, clearly, $m({\cal L}^X)<m({\cal L})$. 

We recall that the finite-dimensional 
${\cal B}$-modules $Y$ of the form $F^X(Y')$, for some $Y'\in {\cal B}^X\g\Mod$ are those ${\cal B}$-modules  $Y$ such that, as an $R$-module, are of the form $Y\cong X\otimes_SY'$, for some $S$-module $Y'$, see \cite[(16.1)]{BSZ}. 

Here, we have  $N\cong f''_r(X)\cong X\frak{f}''_r\cong X\otimes_Sk\frak{f}''_r$, where $\frak{f}''_r$ is the idempotent corresponding to the direct summand 
$k[x]/(x-\lambda_0)^r$ of $X$. Thus, $F^X(kf''_r)\cong N$. Similarly, we get that $F^X(k\frak{f}''_s)\cong [k[x]/(x-\lambda_0)^s]\frak{f}_{z_0}$, whenever this last module belongs to ${\cal L}$.    
For $m\in \hueca{N}$ and $\lambda\not=\lambda_0$, we have 
$$X\otimes_S [k[x]/(x-\lambda)^m]\frak{f}'_{z_0}\cong (R\frak{f}_{z_0})_{x-\lambda_0}\otimes_S[k[x]/(x-\lambda)^m]\frak{f}'_{z_0}\cong [k[x]/(x-\lambda)^m]\frak{f}_{z_0},$$
so $F^X([k[x]/(x-\lambda)^m]\frak{f}'_{z_0})\cong [k[x]/(x-\lambda)^m]\frak{f}_{z_0}$. 
 Similarly, for any indecomposable $M\frak{f}_z\in {\cal L}^X$ with $z\not=z_0$, we have $F^X(M\frak{f}_z)\cong M\frak{f}_z$. So,   $F^X$ induces a bijective map from ${\cal L}^X$ to ${\cal L}$ and $(1)$  holds. 

We have $k(x)\frak{f}_{z_0}\cong (R\frak{f}_{z_0})_{x-\lambda_0}\otimes_S k(x)\frak{f}'_{z_0}\cong X\otimes_Sk(x)\frak{f}'_{z_0}$, which implies that $F^X(k(x)\frak{f}'_{z_0})\cong k(x)\frak{f}_{z_0}$.   
Similarly, one shows that $F^X(k(x)\frak{f}_z)\cong k(x)\frak{f}_z$ when $z\in \frak{M}\setminus\{z_0\}$. 
\end{proof}

\begin{proposition}\label{P: ditalg min simplificada} Let ${\cal B}$ be a minimal ditalgebra, ${\cal L}$ a finite set of non-isomorphic indecomposables in ${\cal B}\g\mod$, and $K_{z_0}$ a generic ${\cal B}$-module, based at the marked point $z_0$. Then there is a minimal ditalgebra ${\cal B}'$ and a full and faithful functor  
$$F':{\cal B}'\g\Mod\rightmap{}{\cal B}\g\Mod$$
such that:
\begin{enumerate}
\item ${\cal B}'$ has a unique marked point $y_0$, thus a unique generic module $K_{y_0}'$. 
\item There is a finite set ${\cal L}'$ of non-marked  indecomposables in ${\cal B}'\g\mod$  such that $F'$ induces a bijection from ${\cal L}'$ onto ${\cal L}$.
\item We have $F'(K'_{y_0})\cong K_{z_0}$. 
\end{enumerate}
\end{proposition}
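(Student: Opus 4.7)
The plan is a two-stage reduction whose component full and faithful functors I will compose at the end.

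In the first stage, I would iterate Lemma \ref{L reemplazando marcados}. Each application strictly decreases $m(\cdot)$, so after finitely many steps I obtain a minimal ditalgebra ${\cal B}_1$, a composition of full and faithful reduction functors $F_1:{\cal B}_1\g\Mod\to{\cal B}\g\Mod$, and a finite set ${\cal L}_1\subset{\cal B}_1\g\mod$ with $m({\cal L}_1)=0$ that $F_1$ bijects onto ${\cal L}$. By Remark \ref{L: ditalg minimales}, every element of ${\cal L}_1$ is a non-marked simple $k\frak{f}_y$, i.e.\ $y\in\frak{Z}_1$. The bijections of marked points furnished by part (2) of the lemma compose to a bijection $\frak{M}\cong\frak{M}_1$, and in particular there is a marked point $y_1\in\frak{M}_1$ with $F_1(k(x)\frak{f}_{y_1})\cong Q_{z_0}$. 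After this stage the set ${\cal L}_1$ is clean, but ${\cal B}_1$ may still carry several marked points.

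In the second stage, I would use a single admissible-module reduction to collapse every marked point of ${\cal B}_1$ other than $y_1$. Choose scalars $\mu_z\in k$ with $h_z(\mu_z)\neq 0$ for each $z\in\frak{M}_1\setminus\{y_1\}$, and set
$$X:=R_1\frak{f}_{y_1}\oplus\bigoplus_{y\in\frak{Z}_1}R_1\frak{f}_y\oplus\bigoplus_{z\in\frak{M}_1\setminus\{y_1\}}k[x]/(x-\mu_z)\cdot\frak{f}_z.$$
Each summand is of a type permitted for admissible $R_1$-modules in \cite{BSZ}, and since there is an indecomposable summand based at every point of $R_1$, the module $X$ is complete. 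A direct computation of $S=\End_{R_1}(X)^{op}/\rad\End_{R_1}(X)^{op}$, in the same spirit as in the proof of Lemma \ref{L reemplazando marcados}, shows that its factor at $y_1$ remains $R_1\frak{f}_{y_1}\cong k[x]_{h_{y_1}}$ while every other factor collapses to a copy of $k$. Therefore ${\cal B}':={\cal B}_1^X$ is a minimal ditalgebra with a unique marked point $y_0$ corresponding to $y_1$.

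Setting $F':=F_1\circ F^X$, where $F^X:{\cal B}'\g\Mod\to{\cal B}_1\g\Mod$ is the full and faithful functor supplied by the admissibility and completeness of $X$, gives the desired full and faithful functor. Because $R_1\frak{f}_{y_1}$ is itself a summand of $X$, the generic $Q'_{y_0}=k(x)\frak{f}_{y_0}$ satisfies $F^X(Q'_{y_0})\cong k(x)\frak{f}_{y_1}$, and hence $F'(Q'_{y_0})\cong Q_{z_0}$. For every simple $k\frak{f}_y\in{\cal L}_1$ the summand $R_1\frak{f}_y$ of $X$ produces a non-marked simple ${\cal B}'$-module that $F^X$ sends back to $k\frak{f}_y$; collecting these lifts gives a set ${\cal L}'$ of non-marked indecomposables in ${\cal B}'\g\mod$ that $F'$ puts in bijection with ${\cal L}$. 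The main obstacle I expect is the verification in the second stage that $X$ is complete admissible in the technical sense of \cite{BSZ} and that $W_0^X=0$ (so ${\cal B}'$ is genuinely minimal); both follow the pattern of the calculation in the proof of Lemma \ref{L reemplazando marcados}, only now with several blocks being modified in parallel rather than only one.
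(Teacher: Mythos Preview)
Your first stage coincides with the paper's: iterate Lemma~\ref{L reemplazando marcados} until $m(\cdot)=0$, obtaining a minimal ditalgebra ${\cal B}_1$, a full and faithful composite $F_1$, a set ${\cal L}_1$ of non-marked simples mapped bijectively onto ${\cal L}$, and a marked point $y_1$ with $F_1(k(x)\frak{f}_{y_1})\cong Q_{z_0}$.

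The second stage is where you diverge. The paper does something much simpler than your admissible-module reduction: it takes ${\cal B}':={\cal B}_1^d$, the ditalgebra obtained by \emph{deletion of the idempotents} $\frak{f}_y$ for all marked points $y\neq y_1$, and sets $F'=F_1\circ F^d$. Deletion is a one-line operation: only the marked point $y_1$ survives, the non-marked points (hence ${\cal L}_1$) are untouched, $k(x)\frak{f}_{y_1}$ lies in the image of $F^d$, and $F^d$ is automatically full and faithful. No verification of admissibility or of $W_0^X=0$ is needed.

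Your route---replacing each unwanted marked point by a non-marked one via $F^X$---should also succeed, but at the cost of the technical checks you yourself flag (that your $X$ is complete admissible in the sense of \cite{BSZ}\S12--13; note that your justification ``there is a summand at every point'' is not quite the definition of completeness there), and it leaves behind superfluous non-marked points, one for each $z\in\frak{M}_1\setminus\{y_1\}$, that play no role in the conclusion. The paper's deletion shortcut sidesteps both issues entirely.
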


\begin{proof} Applying repeatedly (\ref{L reemplazando marcados}), we obtain a minimal ditalgebra ${\cal B}^u$ and a full and faithful functor $F^u:{\cal B}^u\g\Mod\rightmap{}{\cal B}\g\Mod$ such that: 
\begin{enumerate}
\item The functor $F^u$ induces a bijection between a set of non-marked indecomposables ${\cal L}^u$ in ${\cal B}^u\g\mod$ and ${\cal L}$. 
\item There is a marked vertex $y_0$ of ${\cal B}^u$ such that the generic ${\cal B}^u$-module $K_{y_0}^u$ based at $y_0$ satisfies that $F^u(K^{u}_{y_0})\cong K_{z_0}$.   
\end{enumerate}
Now, consider the minimal ditalgebra ${\cal B}':={\cal B}^{ud}$ obtained from ${\cal B}^u$ by deletion of the idempotents $\frak{f}_y$ corresponding to marked points $y$ of ${\cal B}^u$ with $y\not=y_0$. Denote by $F'$ the composition 
$${\cal B}^{ud}\g\Mod\rightmap{F^d}{\cal B}^u\g\Mod\rightmap{F^u}{\cal B}\g\Mod.$$
Then, the minimal ditalgebra ${\cal B}'$ and the functor $F'$ satisfy the conditions of the proposition. 
\end{proof}

Consider the family ${\cal L}$ of objects in ${\cal P}^1(\Lambda)$ formed by a complete set of representatives of the isomorphism classes of the objects $L(\Lambda e_t)$, $R(\Lambda e_t)$, $S(\Lambda e_t)$, with $t\in [1,n]$. 

\begin{proposition}\label{P: el funtor F}
There is a minimal ditalgebra 
${\cal B}'$, 
with a unique marked point $z_0$, and a full and faithful functor $\Xi_\Lambda H':  {\cal B}'\g\Mod \rightmap{} {\cal P}^1(\Lambda)$ as in (\ref{R: variar parametrizaciones})
with the following properties:
\begin{enumerate}
\item ${\cal B}'$ has a layer 
$(R', W_1)$ with $R' = k[x]_h\frak{f}_{z_0} \times\left( \times_{z\in \frak{Z}} k\frak{f}_z\right)$.
\item For each $X\in {\cal L}$, there is some $z\in \frak{Z}$ such that $\Xi_\Lambda H'(S_{\frak{f}_z})\cong X$.
\item  $ \Xi_\Lambda H'(k(x)\frak{f}_{z_ 0}) \cong \hat{G}$. 
\end{enumerate}
\end{proposition}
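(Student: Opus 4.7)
The plan is to combine the starting point in Remark (\ref{R: first choice of min ditalg}) with the trimming process of Proposition (\ref{P: ditalg min simplificada}). First, fix the minimal ditalgebra ${\cal B}$ and the full and faithful functor $\Xi_\Lambda H: {\cal B}\g\Mod \rightmap{} {\cal P}^1(\Lambda)$ provided by (\ref{R: first choice of min ditalg}), with $d>d_0$ chosen large enough so that there exist modules $L_{l_t}, L_{r_t}, L_{z_t} \in {\cal B}\g\mod$ satisfying $\Xi_\Lambda H(L_{l_t})\cong L(\Lambda e_t)$, $\Xi_\Lambda H(L_{r_t})\cong R(\Lambda e_t)$, $\Xi_\Lambda H(L_{z_t})\cong S(\Lambda e_t)$ for each $t\in[1,n]$, together with a marked point $z_0$ of ${\cal B}$ such that $\Xi_\Lambda H(k(x)\frak{f}_{z_0})\cong \hat{G}$. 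Let ${\cal L}_0$ be a complete set of representatives of the isomorphism classes of the indecomposables $L_{l_t}, L_{r_t}, L_{z_t}$ for $t\in[1,n]$; this is a finite set in ${\cal B}\g\mod$.

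Next, apply Proposition (\ref{P: ditalg min simplificada}) to the pair $({\cal L}_0, Q_{z_0})$, where $Q_{z_0}=k(x)\frak{f}_{z_0}$ is the generic ${\cal B}$-module based at $z_0$. This yields a minimal ditalgebra ${\cal B}'$ with a unique marked point, which I relabel $z_0$, a full and faithful functor $F':{\cal B}'\g\Mod \rightmap{} {\cal B}\g\Mod$ obtained as a composition of admissible-module constructions from Lemma (\ref{L reemplazando marcados}) followed by a deletion of the extraneous marked idempotents, a finite set ${\cal L}'$ of non-marked indecomposables in ${\cal B}'\g\mod$ that $F'$ maps bijectively onto ${\cal L}_0$, and an isomorphism $F'(k(x)\frak{f}_{z_0})\cong Q_{z_0}$, where $k(x)\frak{f}_{z_0}$ is the unique generic ${\cal B}'$-module. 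Since $F'$ is itself a composition of full and faithful reduction functors, setting $H':=H\circ F'$ produces a functor $\Xi_\Lambda H'$ that still fits the framework of Remark (\ref{R: variar parametrizaciones}).

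To verify the three items, for (1) I invoke Remark (\ref{L: ditalg minimales}): the base algebra of a minimal ditalgebra with a single marked point has precisely the shape $R'=k[x]_h\frak{f}_{z_0}\times(\times_{z\in\frak{Z}}k\frak{f}_z)$ for the appropriate polynomial $h=h_{z_0}$, and $W_0=0$ by minimality. For (2), Remark (\ref{L: ditalg minimales}) also tells us that every non-marked indecomposable in ${\cal B}'\g\mod$ is a simple module $S_{\frak{f}_z}$ for some $z\in\frak{Z}$; combining the bijection ${\cal L}'\leftrightarrow{\cal L}_0$ with $\Xi_\Lambda H$, we obtain, for each $X\in{\cal L}$, a non-marked point $z$ of ${\cal B}'$ such that $\Xi_\Lambda H'(S_{\frak{f}_z})\cong X$. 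For (3), we compute $\Xi_\Lambda H'(k(x)\frak{f}_{z_0})=\Xi_\Lambda H(F'(k(x)\frak{f}_{z_0}))\cong \Xi_\Lambda H(Q_{z_0})\cong \hat{G}$. The only delicate point in the assembly is controlling the iteration of Lemma (\ref{L reemplazando marcados}): at each stage the invariant $m(\cdot)$ strictly decreases, which guarantees termination in finitely many steps with ${\cal L}'$ entirely non-marked, after which the final deletion eliminates all surplus marked idempotents while preserving $z_0$ together with its generic module.
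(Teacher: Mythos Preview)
Your proposal is correct and follows essentially the same approach as the paper: start from the minimal ditalgebra of Remark (\ref{R: first choice of min ditalg}), apply Proposition (\ref{P: ditalg min simplificada}) to the finite family of indecomposables mapping to the objects in ${\cal L}$ together with the generic $Q_{z_0}$, and set $H':=H\circ F'$. The additional remarks you include about termination of the iteration and the classification of non-marked indecomposables via Remark (\ref{L: ditalg minimales}) are sound elaborations of points the paper leaves implicit.
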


\begin{proof} Adopt the notation of (\ref{R: first choice of min ditalg}). Applying (\ref{P: ditalg min simplificada}) to the finite family ${\cal L}'$ formed by the objects $L_{l_t}, L_{r_t}, L_{z_t}$ corresponding to the isoclasses of $L(\Lambda e_t), R(\Lambda e_t), S(\Lambda e_t)$ which conform ${\cal L}$, we have a minimal ditalgebra ${\cal B}'$ with a unique marked point $y_0$ and a full and  faithful functor  
$F'$ such that the composition 
$${\cal B}'\g\Mod\rightmap{F'}{\cal B}\g\Mod\rightmap{H}{\cal D}\g\Mod\rightmap{\Xi}{\cal P}^1(\Lambda)$$
has the following properties: there are non-marked points $l_t, r_t, z_t$ of ${\cal B}'$, indexed by points $t\in [1,n]$, such that
$$\begin{matrix}\Xi HF'(S_{\frak{f}_{l_t}})&\cong& \Xi H(L_{l_t})\cong L(\Lambda e_t),\hfill\\  
\Xi HF'(S_{\frak{f}_{r_t}})&\cong& \Xi H(L_{r_t})\cong R(\Lambda e_t),\hfill\\ 
\Xi HF'(S_{\frak{f}_{z_t}})&\cong& \Xi H(L_{z_t})\cong S(\Lambda e_t),\hfill\\
\Xi HF'(k(x)\frak{f}_{y_0})&\cong& \Xi H (k(x)\frak{f}_{z_0})\cong \hat{G}.\hfill
\end{matrix}$$
Thus, taking $H'=HF'$, we get what we wanted.  
\end{proof}

As remarked in (\ref{R: variar parametrizaciones}), we can assume that ${\cal B}$ already has the properties without primes, required in the last  proposition for ${\cal B}'$ with primes. We will denote by $F$ the composition $\Xi_\Lambda H:{\cal B}\g\Mod\rightmap{}{\cal P}^1(\Lambda)$ and again by $M$ the new associated realization of $G$.   

We are interested in the study of the relation between $\rad \End_\Lambda(G)$ and the families $\rad\End_\Lambda(M(\lambda))$. We will achieve this by reducing to the same study of the corresponding precedents $k(x)\frak{f}_{z_0}$ and $S_{z_0,\lambda}$ in ${\cal B}\g\Mod$ of $G$ and $M(\lambda)$, respectively.  

\section{Radical morphisms of ${\cal B}$-modules} 

From now on, we adopt the setting of (\ref{P: el funtor F}).  So ${\cal B}$ is a minimal ditalgebra with a unique marked point $z_0$ and a full and faithful functor $F:{\cal B}\g\Mod\rightmap{}{\cal P}^1(\Lambda)$ satisfying the unprimed conditions of that proposition. So, the layer $(R,W)$ of ${\cal B}$ has the form $R = k[x]_h\frak{f}_{z_0} \times\left( \times_{z\in \frak{Z}} k\frak{f}_z\right)$ and we have the idempotents $\frak{f}_{l_t}$ with  
$\hueca{L}_t:=F (S_{\frak{f}_{l_t}})\cong  L(\Lambda e_t)$, $\frak{f}_{r_t}$ with 
$\hueca{R}_t:=
F (S_{\frak{f}_{r_t}})
\cong  
 R(\Lambda e_t)$, 
 $\frak{f}_{z_t}$ with  
$\hueca{S}_t:=
F (S_{\frak{f}_{z_t}}) 
\cong S(\Lambda e_t)$. So, we have families $\{id_{\hueca{S}_t}\}_t$ and $\{\gamma'_t:\hueca{L}_t\rightmap{}\hueca{R}_t\}_t$ that are a kernel-generating system for $\Coker:{\cal P}^1(\Lambda)\rightmap{}\Lambda\g\Mod$. For the sake of simplicity, we will use the symbol $\gamma_t$ instead of $\gamma'_t$. Since ${\cal B}$ has only one marked point $z_0$, we simplify the notation $S_\lambda:=S_{z_0,\lambda}$, thus $\Coker F(S_\lambda)\cong M(\lambda)$, for 
 all $\lambda\in k$ with $h(\lambda)\not=0$, where $M$ is the realization of $G$ associated to this setting. Moreover,  we define  $\frak{f}_0:=\frak{f}_{z_0}$ and  $D(h):=\{\lambda\in k\mid h(\lambda)\not=0\}$,  and  we denote by $\frak{F}$ the whole set of primitive idempotents 
$\{\frak{f}_0\}\cup \{\frak{f}_z\mid z\in \frak{Z}\}$ of $R$.

\begin{remark}\label{R: morfismos en B-Mod}
Since ${\cal B}$ is minimal,  the $R$-$R$-bimodule  $W_1$ is  freely generated by a finite directed subset, see \cite[(23.2)]{BSZ}. As remarked in  \cite[(15.7)]{BSZ}, this is equivalent to the existence of an isomorphism of $R$-$R$-bimodules $W_1\cong R\otimes_S\hat{W}_1\otimes_SR$, for some finite-dimensional $S$-$S$-subbimodule $\hat{W}_1$ of $W_1$, where $S$ is the subalgebra  
 $$S=k\frak{f}_0\times \left(\times_{z\in \frak{Z}}k\frak{f}_z\right) \subseteq R.$$

Given $X,Y\in {\cal B}\g\Mod$, any morphism $\hat{h}^1\in \Hom_{R\g R}(W_1,\Hom_k(X,Y))$ determines a morphism $(0,\hat{h}^1)\in \Hom_{\cal B}(X,Y)$. Thus, any bimodule morphism  
$h^1\in \Hom_{S\g S}(\hat{W}_1,\Hom_k(X,Y))$ determines a morphism $(0,\hat{h}^1)\in \Hom_{\cal B}(X,Y)$,  where $\hat{h}^1$ denotes the extension of $h^1$ in $\Hom_{R\g R}(W_1,\Hom_k(X,Y))$. In order to simplify the notation, we will abuse of the language and use the same symbol $h^1$ to denote $h^1$ and its extension   $\hat{h}^1$: the context will always avoid any possible confusion. For the sake of precision, we will break this convention
in some definitions. 
\end{remark}

 We need to study some hom-spaces $\Hom_{\cal B}(X,Y)$ and compare them with $\Hom_{{\cal P}^1(\Lambda)}(F(X),F(Y))$ using the functor $F$, for special ${\cal B}$-modules $X$ and $Y$. So, we need to give a practical description of these morphisms. For this the following will be useful. 

\begin{lemma}\label{L: isos entre duales}
Consider the left dual  
$\,^*\hat{W}_1:=\Hom_S(_S\hat{W}_1,S)$ and, for $\frak{f},\frak{f}'\in \frak{F}$, the usual dual spaces $D(\frak{f}'\hat{W}_1\frak{f})=\Hom_k(\frak{f}'\hat{W}_1\frak{f},k)$. Then, 
for any $\frak{f},\frak{f}'\in \frak{F}$, we have the linear  isomorphism  
$$
D(\frak{f}'\hat{W}_1 \frak{f})=\Hom_k(\frak{f}'\hat{W}_1 \frak{f},k)\rightmap{\overline{\,\cdot\,}}\frak{f}\,\Hom_S(\hat{W}_1,S)\frak{f}'=
\frak{f}(\,^*\hat{W}_1)\frak{f}',$$
which maps each linear map $\xi\in D(\frak{f}'\hat{W}_1 \frak{f})$ onto  $\overline{\xi}\in \,^*\hat{W}_1$ defined by $\overline{\xi}(w)=\xi(\frak{f}'w\frak{f})\frak{f}'$, for $w\in \hat{W}_1$.  
\end{lemma}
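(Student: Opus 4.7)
The plan is to verify the lemma by a direct bimodule computation, exploiting the fact that $S$ is a commutative semisimple algebra (a product of copies of $k$ indexed by $\frak{F}$) so that every element $\frak{f}(\,^*\hat{W}_1)\frak{f}'$ is pinned down by a single scalar at each argument. Before anything else, I would spell out the bimodule structure on $\,^*\hat{W}_1=\Hom_S({}_S\hat{W}_1,S)$: the left $S$-action comes from the right $S$-action on $\hat{W}_1$ via $(s\cdot f)(w)=f(ws)$, and the right $S$-action comes from the right $S$-action on the target $S$ via $(f\cdot s)(w)=f(w)s$. This way the subspace $\frak{f}(\,^*\hat{W}_1)\frak{f}'$ is characterized as the set of $S$-linear $f:\hat{W}_1\to S$ with $f(w)=f(w\frak{f})\frak{f}'$.

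Next I would check that the proposed map $\xi\mapsto\overline{\xi}$ is well-defined into this subspace. Left $S$-linearity of $\overline{\xi}$ is immediate because $S$ is commutative: writing $s=\sum_{\frak{g}}a_{\frak{g}}\frak{g}$ we get $s\frak{f}'=a_{\frak{f}'}\frak{f}'$, and both $\overline{\xi}(sw)$ and $s\overline{\xi}(w)$ reduce to $a_{\frak{f}'}\xi(\frak{f}'w\frak{f})\frak{f}'$. The identities $(\frak{f}\cdot\overline{\xi})(w)=\overline{\xi}(w\frak{f})=\xi(\frak{f}'w\frak{f})\frak{f}'=\overline{\xi}(w)$ and $(\overline{\xi}\cdot\frak{f}')(w)=\overline{\xi}(w)\frak{f}'=\overline{\xi}(w)$ then show $\overline{\xi}\in\frak{f}(\,^*\hat{W}_1)\frak{f}'$. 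Linearity in $\xi$ is obvious.

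For injectivity, if $\overline{\xi}=0$ then $\xi(\frak{f}'w\frak{f})\frak{f}'=0$ for all $w\in\hat{W}_1$; restricting to $w\in\frak{f}'\hat{W}_1\frak{f}$ forces $\xi=0$. For surjectivity, given $f\in\frak{f}(\,^*\hat{W}_1)\frak{f}'$, I would use left $S$-linearity together with the fact that $f(w)\in S\frak{f}'=k\frak{f}'$ to deduce $f(w)=\frak{f}'f(w)=f(\frak{f}'w)$; combined with the defining property $f(w)=f(w\frak{f})\frak{f}'$, this yields $f(w)=f(\frak{f}'w\frak{f})\frak{f}'$. Then defining $\xi:\frak{f}'\hat{W}_1\frak{f}\to k$ by the rule $f(w)=\xi(w)\frak{f}'$ for $w\in\frak{f}'\hat{W}_1\frak{f}$ produces a linear functional with $\overline{\xi}=f$.

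I do not expect a serious obstacle; the only subtle point is keeping the bimodule conventions straight on $\,^*\hat{W}_1$, since the left $S$-action on the dual comes from the right action on $\hat{W}_1$ (and symmetrically). Once that is fixed, the scalar-valued description of $f\in\frak{f}(\,^*\hat{W}_1)\frak{f}'$ as a single linear functional on $\frak{f}'\hat{W}_1\frak{f}$ (multiplied by $\frak{f}'$) makes the construction of the inverse essentially tautological.
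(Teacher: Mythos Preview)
Your proof is correct and follows essentially the same approach as the paper: both verify that $\overline{\xi}$ is left $S$-linear and lands in $\frak{f}(\,^*\hat{W}_1)\frak{f}'$, then establish bijectivity. The only cosmetic difference is that the paper exhibits the inverse as the composite of the standard isomorphisms $\frak{f}\Hom_S(\hat{W}_1,S)\frak{f}'\cong\Hom_S(\hat{W}_1\frak{f},S\frak{f}')\cong\Hom_S(\frak{f}'\hat{W}_1\frak{f},S\frak{f}')\cong\Hom_k(\frak{f}'\hat{W}_1\frak{f},k)$, whereas you unwind this into an explicit injectivity/surjectivity argument; the content is the same.
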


\begin{proof} Given $\xi\in D(\frak{f}'\hat{W}_1\frak{f})$, $w\in \hat{W}_1$, and $s\in S$, we have 
$$\overline{\xi}(sw)=\xi(\frak{f}'sw\frak{f})\frak{f}'=\xi(s\frak{f}'w\frak{f})\frak{f}'=
s\xi(\frak{f}'w\frak{f})\frak{f}'=s\overline{\xi}(w), $$
where the third equality holds because $s\frak{f}'=\mu \frak{f}'$, for a scalar $\mu\in k$. So, indeed we have $\overline{\xi}\in \,^*\hat{W}_1$.  Since $(\frak{f}\overline{\xi}\frak{f}')(w)=\overline{\xi}(w\frak{f})\frak{f}'=\xi(\frak{f}'w\frak{f})\frak{f}'=\overline{\xi}(w)$, for all $w$, we see that, indeed,  $\overline{\xi}\in \frak{f}(\,^*\hat{W}_1)\frak{f}'$.

The linear map $\overline{ \,\cdot\,  }$ is bijective with inverse given by the composition of 
 the following chain of isomorphisms
$$\begin{matrix}\frak{f}(\,^*\hat{W}_1)\frak{f}'&=&\frak{f}\Hom_S(\hat{W}_1,S)\frak{f}'
\cong \Hom_S(\hat{W}_1\frak{f},S\frak{f}')\hfill\\ 
&\cong& \Hom_S(\frak{f}'\hat{W}_1\frak{f},S\frak{f}')\cong \Hom_k(\frak{f}'\hat{W}_1\frak{f},k).\\\end{matrix}$$
\end{proof}

\begin{remark}\label{R: base de hatW para descripcion de morfismos en B-mod} The notation given in the following will be kept throughout the whole paper. 
For any  $\frak{f}',\frak{f}\in \frak{F}$ consider a $k$-basis 
$\hueca{B}_{\frak{f}',\frak{f}}$ of   $\frak{f}'\hat{W}_1 \frak{f}$, and consider also the $k$-basis 
$\hueca{B} = \bigcup_{\frak{f}',\frak{f}\in \frak{F}} \hueca{B}_{\frak{f}',\frak{f}}$ of the vector space $\hat{W}_1$. 

For each $v\in \hueca{B}_{\frak{f}',\frak{f}}$ we consider the element $v^*\in D(\frak{f}'\hat{W}_1\frak{f})$ defined by $v^*(w)=0$, for all $w\in \hueca{B}_{\frak{f}',\frak{f}}\setminus\{v\}$, and $v^*(v)=1$. Thus, $\{v^*\mid v\in \hueca{B}_{\frak{f}',\frak{f}}\}$ is a $k$-basis for $D(\frak{f}'\hat{W}\frak{f})$ and, by the last lemma, $\{\overline{v^*}\mid v\in \hueca{B}_{\frak{f}',\frak{f}}\}$ is a $k$-basis for $\frak{f}(\,^*\hat{W}_1)\frak{f}'$.  Since, 
$\,^*\hat{W}_1=\bigoplus_{\frak{f}',\frak{f}\in \frak{F}}\frak{f}(\,^*\hat{W}_1)\frak{f}'$, we have the $k$-basis $\{\overline{v^*}\mid v\in \hueca{B}\}$ for $\,^*\hat{W}_1$.   

For each $v\in \hueca{B}_{\frak{f}',\frak{f}}$, the basis  element 
$\hat{v}:=\overline{v^*}\in  \,^*\hat{W}_1$  satisfies  
$\hat{v}(w)=0$ for all $w\in \hueca{B}\setminus{\{v\}} $ and $\hat{v}(v)=\frak{f}'$. Notice that $\hat{v}\in \frak{f}(^*\hat{W}_1)\frak{f}'$. 

 The $k$-basis $\{\hat{v}\mid v\in \hueca{B}\}$ 
 for $^*\hat{W}_1$, formed by directed elements, will be very useful in the descriptions of some  especial morphisms in ${\cal B}\g\Mod$. 
\end{remark}

\begin{remark}\label{R: el radical} For any additive $k$-category ${\cal C}$, 
we will denote by $\rad_{\cal C}$ \emph{the radical} of  ${\cal C}$, see for instance \cite[(2.10)]{Kr}. Thus, 
$f \in \Hom_{\cal C}(X,Y)$  belongs to the radical if and only if $id_Y - fg$ has a right inverse for every $g \in \Hom_{\cal C}(Y, X)$. Moreover, for each $X\in {\cal C}$, $\rad_{\cal C}(X,X)$ coincides with the Jacobson radical of the algebra $\End_{\cal C}(X)$.  We simply write $\rad_{\cal B}$ for the radical of the category ${\cal B}\g\Mod$. 
\end{remark}

\begin{lemma}\label{L: descripcion de rad(N,N')}
For any simple modules $N,N'\in {\cal B}\g\mod$ and $f=(f^0,f^1)$ with $f^0\in \Hom_R(N,N')$ and $f^1\in \Hom_{R\g R}(W_1,\Hom_k(N,N'))$, we have that   $f=(f^0,f^1)\in \rad_{\cal B}(N,N')$ iff $f^0=0$. 
\end{lemma}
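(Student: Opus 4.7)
The plan is to split the argument according to whether $N\cong N'$ as ${\cal B}$-modules. Since $W_0=0$, the underlying algebra of ${\cal B}$ is $A=R$ and the simple ${\cal B}$-modules are exactly the simple $R$-modules, all of which are one-dimensional over $k$ by the description of $R$. A preliminary observation I would use is that for a minimal ditalgebra $\delta\vert_R=0$ (which is also implicit in the hypothesis of the lemma, since it is what guarantees $f^0\in\Hom_R(N,N')$). In particular, the composition law in ${\cal B}\g\Mod$ gives $(gf)^0=g^0\circ f^0$, so the zeroth-component assignment $f\mapsto f^0$ is multiplicative; when $N=N'$ it becomes a $k$-algebra homomorphism $\pi:\End_{\cal B}(N)\to\End_R(N)=k$.

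First I would treat the case $N\not\cong N'$. Here $\Hom_R(N,N')=0$, so $f^0=0$ automatically. For the converse direction in this case, any $g=(g^0,g^1):N'\to N$ in ${\cal B}\g\Mod$ also has $g^0=0$, hence $(fg)^0=f^0\circ g^0=0$ and $(id_{N'}-fg)^0=id_{N'}$. I would then build a right inverse of $id_{N'}-fg$ by taking its zeroth component to be $id_{N'}$ and solving recursively for the first component, using the composition law in ${\cal B}\g\Mod$ and the triangularity of the layer $(R,W)$ to guarantee that the recursion terminates. Thus $f\in\rad_{\cal B}(N,N')$.

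Next, for $N=N'$, the equality $\End_R(N)=k$ forces $f^0=\lambda\cdot id_N$ for some $\lambda\in k$. If $\lambda=0$, the argument of the previous paragraph applies to any $g\in\End_{\cal B}(N)$ and yields $f\in\rad_{\cal B}(N,N)$. If $\lambda\ne 0$, the same inductive procedure (now starting from zeroth part $\lambda^{-1}id_N$) produces a two-sided inverse of $f$ in $\End_{\cal B}(N)$, so $f$ is a unit and therefore $f\notin\rad_{\cal B}(N,N)$. Equivalently, the kernel of $\pi$ is precisely the Jacobson radical of the local algebra $\End_{\cal B}(N)$. Combining both cases yields the equivalence.

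The main obstacle is the recursive construction of the inverse (or right inverse) in ${\cal B}\g\Mod$. This requires the explicit composition law for ditalgebra morphisms, including the correction terms coming from $\delta$ applied to elements of $W_1$, and leans on the triangular seminested structure to ensure termination of the recursion. Both ingredients are standard in the ditalgebra framework and can be invoked directly from \cite{BSZ}.
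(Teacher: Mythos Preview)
Your proposal is correct and follows essentially the same approach as the paper. The paper avoids your case split and handles both directions uniformly: for $f^0=0$ it observes that $id_{N'}-fg=(id,0)-(0,h^1)$ and cites \cite{BSZ}(5.4) directly for the nilpotency of $(0,h^1)$ (which is exactly what your recursive-inverse construction via triangularity amounts to); for $f^0\neq 0$ it takes $g=((f^0)^{-1},0)$ and notes that $id_{N'}-fg$ is then of the form $(0,h^1)$, hence nilpotent, contradicting the existence of a right inverse.
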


\begin{proof}  Consider a pair $f=(0,f^1)\in \Hom_{\cal B}(N,N')$ and take any $g\in \Hom_{\cal B}(N',N)$. Then we have that $id_{N'}-fg=id_{N'}-(0,h^1)$, for some $(0,h^1)\in\End_{\cal B}(N')$. From \cite[(5.4)]{BSZ}, the morphism $(0,h^1)$ is nilpotent in the algebra $\End_{\cal B}(N')$. So,  $id_{N'}-fg$ is invertible, and $f=(0,f^1)\in \rad_{\cal B}(N,N')$. 
 
Reciprocally, assume that $f\in \rad_{\cal B}(N,N')$ with $f^0\not=0$. Since $N,N'$ are 
one-dimensional,  $f^0:N\rightmap{}N'$ is an $R$-linear isomorphism. Let $g^0:N'\rightmap{}N$ be the inverse of $f^0$, then $g=(g^0,0)\in \Hom_{\cal B}(N',N)$ is such that $id_{N'}- fg=(0,h^1)$, for some $h^1\in \Hom_{R\g R}(W_1,\Hom_k(N',N'))$. This  is not possible because  $id_{N'}-fg$ is a retraction. 
\end{proof}

\begin{lemma}\label{L: para construir morfismos entre simples} Assume $N,N'\in \{S_\lambda\mid \lambda\in D(h)\}\cup \{S_{\frak{f}}\mid \frak{f}\not=\frak{f}_0\}$. Then,  as vector spaces, we have $N=k\frak{f}$ and $N'=k\frak{f}'$, for some  $\frak{f},\frak{f}'\in \frak{F}$. 
We have  the following  linear isomorphism 
$$\theta:D(\frak{f}'\hat{W}_1\frak{f})\rightmap{}\Hom_{S\g S}(\hat{W}_1,\Hom_k(N,N')),$$
which maps each linear map $\xi\in D(\frak{f}'\hat{W}_1\frak{f})$ onto $\theta(\xi)$ defined by $\theta(\xi)(w)[\frak{f}]=\xi(\frak{f}'w\frak{f})\frak{f}'$, for $w\in \hat{W}_1$. 
\end{lemma}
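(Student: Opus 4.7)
The plan is to identify the $S$-$S$-bimodule structure of $\Hom_k(N,N')$, and then reduce the $S$-$S$-bilinearity condition to an evaluation at a single pair of idempotents. Since $N=k\frak{f}$ and $N'=k\frak{f}'$ as one-dimensional $S$-modules (with $\frak{f}$ acting as identity on $N$ and every other primitive idempotent acting as $0$, and likewise for $N'$), the vector space $\Hom_k(N,N')$ is one-dimensional, spanned by the map $\phi_0\colon\frak{f}\mapsto\frak{f}'$. Moreover, a direct check shows that $\frak{g}\cdot\phi_0\cdot\frak{g}'=\phi_0$ when $(\frak{g},\frak{g}')=(\frak{f}',\frak{f})$ and vanishes otherwise; equivalently, $\Hom_k(N,N')=\frak{f}'\Hom_k(N,N')\frak{f}$.

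First, I would verify that the formula defines a well-posed map $\theta$. Fix $\xi\in D(\frak{f}'\hat{W}_1\frak{f})$. The assignment $w\mapsto\theta(\xi)(w)$ where $\theta(\xi)(w)[\frak{f}]:=\xi(\frak{f}'w\frak{f})\frak{f}'$ (and extended $k$-linearly on $N$) clearly gives a $k$-linear map $\hat{W}_1\to\Hom_k(N,N')$. To check $S$-$S$-bilinearity it suffices, by linearity and the primitive decomposition of $S$, to test with idempotents $\frak{g},\frak{g}'\in\frak{F}$: we must compare $\theta(\xi)(\frak{g}w\frak{g}')$ with $\frak{g}\theta(\xi)(w)\frak{g}'$. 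Both sides vanish unless $\frak{g}=\frak{f}'$ and $\frak{g}'=\frak{f}$ — the left because $\frak{f}'\frak{g}w\frak{g}'\frak{f}=0$ in that case, the right because of the concentration of $\Hom_k(N,N')$ described above — and when $(\frak{g},\frak{g}')=(\frak{f}',\frak{f})$ both reduce to $\xi(\frak{f}'w\frak{f})\frak{f}'$. Linearity of $\theta$ in $\xi$ is immediate from the defining formula.

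For injectivity, if $\theta(\xi)=0$ then $\xi(\frak{f}'w\frak{f})=0$ for every $w\in\hat{W}_1$, which upon choosing $w\in\frak{f}'\hat{W}_1\frak{f}$ forces $\xi=0$. For surjectivity, given any $\phi\in\Hom_{S\g S}(\hat{W}_1,\Hom_k(N,N'))$, the bimodule property and the concentration of the target yield $\phi(w)=\frak{f}'\phi(w)\frak{f}=\phi(\frak{f}'w\frak{f})$; thus $\phi$ is determined by its restriction to $\frak{f}'\hat{W}_1\frak{f}$, and on such $w$ the value $\phi(w)\in\Hom_k(N,N')=k\phi_0$ produces a unique scalar $\xi(w)\in k$ with $\phi(w)(\frak{f})=\xi(w)\frak{f}'$. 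The resulting $k$-linear $\xi\in D(\frak{f}'\hat{W}_1\frak{f})$ satisfies $\theta(\xi)=\phi$.

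No serious obstacle appears — the argument is a direct bookkeeping of idempotent actions. The only subtle point is keeping track of the fact that, although $N$ may come from a marked simple $S_\lambda$ or a non-marked $S_{\frak{f}}$, in either case its underlying $S$-module is one-dimensional and supported at a single primitive idempotent of $S$, which is what makes $\Hom_k(N,N')$ concentrated at $(\frak{f}',\frak{f})$ and hence makes the whole correspondence go through cleanly.
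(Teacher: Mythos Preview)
Your proof is correct and follows essentially the same approach as the paper: both verify that $\theta$ lands in $S$-$S$-bimodule maps by exploiting that $S$ acts through scalars on the one-dimensional modules $N,N'$, and then establish bijectivity. The only cosmetic difference is that the paper packages bijectivity as a chain of canonical isomorphisms $\Hom_{S\text{-}S}(\hat{W}_1,\Hom_k(k\frak{f},k\frak{f}'))\cong\Hom_{S\frak{f}'\text{-}S\frak{f}}(\frak{f}'\hat{W}_1\frak{f},\Hom_k(k\frak{f},k\frak{f}'))\cong\Hom_k(\frak{f}'\hat{W}_1\frak{f},k)$, whereas you argue injectivity and surjectivity directly.
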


\begin{proof} This is similar to the proof of 
(\ref{L: isos entre duales}). Given $\xi\in D(\frak{f}'\hat{W}_1\frak{f})$, $w\in \hat{W}_1$, and $s,s'\in S$, we have 
$$\begin{matrix}
\theta(\xi)(sws')[\frak{f}]
&=&
\xi(\frak{f}'sws'\frak{f})\frak{f}'=\xi(s\frak{f}'w\frak{f}s')\frak{f}'\hfill\\ &=&s\xi(\frak{f}'w\frak{f})s'\frak{f}'=
s\theta(\xi)(w)[s'\frak{f}]=(s\theta(\xi)(w)s')[\frak{f}]\hfill\\ 
\end{matrix}$$
where the third equality holds because $s\frak{f}'=\mu \frak{f}'$ and $\frak{f}s'=\mu'\frak{f}$, for some  scalars $\mu,\mu'\in k$. So, indeed $\theta$ is a well defined map.  
The linear map $\theta$ is bijective with inverse given by the composition of 
 the following chain of isomorphisms
 $$\begin{matrix}
\Hom_{S\g S}(\hat{W}_1,\Hom_k(N,N'))&\cong&
\Hom_{S\g S}(\hat{W}_1,\Hom_k(k\frak{f},k\frak{f}'))\hfill\\
&\cong&
\Hom_{S\frak{f}'\g S\frak{f}}(\frak{f}'\hat{W}_1\frak{f},\Hom_k(k\frak{f},k\frak{f}'))\hfill\\
&\cong&
\Hom_k(\frak{f}'\hat{W}_1\frak{f},k)=D(\frak{f}'\hat{W}_1\frak{f}).\hfill\\
\end{matrix}$$
 \end{proof}

\begin{lemma}\label{L: bases para rad entre simples} Whenever $\frak{f},\frak{f}'$ are non-marked idempotents  and  $\lambda\in D(h)$, we have the following:  
\begin{enumerate}
\item  For each $v\in \hueca{B}_{\frak{f}',\frak{f}}$, consider the morphism  $f_v:=(0,\hat{\psi}_v)\in \rad_{\cal B}(S_{\frak{f}},S_{\frak{f}'})$, where   $\hat{\psi}_v\in \Hom_{R\g R}(W_1,\Hom_k(S_{\frak{f}},S_{\frak{f}'}))$ extends the homomorphism $\psi_v\in \Hom_{S\g S}(\hat{W}_1,\Hom_k(S_{\frak{f}},S_{\frak{f}'}))$, given by  $\psi_v(w)[\frak{f}]=\hat{v}(w)\frak{f}'$, for $w\in \hat{W}_1$. Then,  
$$\{f_v\mid v\in \hueca{B}_{\frak{f}',\frak{f}}\}\hbox{ is a $k$-basis for }
\rad_{\cal B}(S_{\frak{f}},S_{\frak{f}'}).$$   
 \item  For each $v\in \hueca{B}_{\frak{f}_0,\frak{f}_0}$, consider the morphism  $f^\lambda_v:=(0,\hat{\psi}^\lambda_v)\in \rad_{\cal B}(S_\lambda,S_{\lambda})$, where   $\hat{\psi}^\lambda_v\in \Hom_{R\g R}(W_1,\Hom_k(S_\lambda,S_{\lambda}))$ extends the homomorphism  $\psi^\lambda_v\in \Hom_{S\g S}(\hat{W}_1,\Hom_k(S_\lambda,S_{\lambda}))$, given by  $\psi^\lambda_v(w)[\frak{f}_0]=\hat{v}(w)\frak{f}_0$, for $w\in \hat{W}_1$. Then,  
$$\{f^\lambda_v\mid v\in \hueca{B}_{\frak{f}_0,\frak{f}_0}\}\hbox{ is a $k$-basis for }\rad_{\cal B}(S_\lambda,S_{\lambda}).$$ 
\item For each $v\in \hueca{B}_{\frak{f},\frak{f}_0}$, consider the morphism  $f^\lambda_v:=(0,\hat{\psi}^\lambda_v)\in \rad_{\cal B}(S_\lambda,S_{\frak{f}})$, where   $\hat{\psi}^\lambda_v\in \Hom_{R\g R}(W_1,\Hom_k(S_\lambda,S_{\frak{f}}))$ extends the homomorphism  $\psi^\lambda_v\in \Hom_{S\g S}(\hat{W}_1,\Hom_k(S_\lambda,S_{\frak{f}}))$, given by  $\psi^\lambda_v(w)[\frak{f}_0]=\hat{v}(w)\frak{f}$, for $w\in \hat{W}_1$. Then,  
$$\{f^\lambda_v\mid v\in \hueca{B}_{\frak{f},\frak{f}_0}\}\hbox{ is a $k$-basis for }\rad_{\cal B}(S_\lambda,S_{\frak{f}}).$$ 
 \item For each $v\in \hueca{B}_{\frak{f}_0,\frak{f}}$, consider the morphism  $f^\lambda_v:=(0,\hat{\psi}^\lambda_v)\in \rad_{\cal B}(S_{\frak{f}},S_\lambda)$, where
 $\hat{\psi}^\lambda_v\in \Hom_{R\g R}(W_1,\Hom_k(S_{\frak{f}},S_\lambda))$ extends the homomorphism  $\psi^\lambda_v\in \Hom_{S\g S}(\hat{W}_1,\Hom_k(S_{\frak{f}},S_\lambda))$, given by  $\psi^\lambda_v(w)[\frak{f}]=\hat{v}(w)\frak{f}_0$, for $w\in \hat{W}_1$. Then,  
$$\{f^\lambda_v\mid v\in \hueca{B}_{\frak{f}_0,\frak{f}}\}\hbox{ is a $k$-basis for }
\rad_{\cal B}(S_{\frak{f}},S_\lambda).$$  
\end{enumerate}
\end{lemma}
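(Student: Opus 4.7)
The plan is to handle all four items uniformly, since in each case both $N$ and $N'$ are one-dimensional as vector spaces, isomorphic to $k\frak{f}$ and $k\frak{f}'$ for appropriate idempotents $\frak{f},\frak{f}'\in\frak{F}$. The reduction proceeds in three steps. First, by Lemma \ref{L: descripcion de rad(N,N')}, any morphism in $\rad_{\cal B}(N,N')$ has the form $(0,f^1)$ with $f^1\in\Hom_{R\g R}(W_1,\Hom_k(N,N'))$, and conversely every such pair is radical. Second, since $W_1\cong R\otimes_S\hat W_1\otimes_SR$ as $R$-$R$-bimodules (Remark \ref{R: morfismos en B-Mod}), restriction to $\hat W_1$ induces a linear isomorphism
$$\Hom_{R\g R}(W_1,\Hom_k(N,N'))\cong\Hom_{S\g S}(\hat W_1,\Hom_k(N,N')),$$
whose inverse sends an $S$-$S$-bimodule map $h^1$ to its $R$-$R$-bimodule avatar $\hat h^1$. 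Third, Lemma \ref{L: para construir morfismos entre simples} gives an isomorphism $\theta:D(\frak{f}'\hat W_1\frak{f})\to\Hom_{S\g S}(\hat W_1,\Hom_k(N,N'))$. Composing these three steps produces an isomorphism $\rad_{\cal B}(N,N')\cong D(\frak{f}'\hat W_1\frak{f})$, under which any basis of the latter transfers to a basis of the former.

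The natural candidate basis is the dual basis $\{v^*\mid v\in\hueca{B}_{\frak{f}',\frak{f}}\}$ of $D(\frak{f}'\hat W_1\frak{f})$, dual to the basis $\hueca{B}_{\frak{f}',\frak{f}}$ of $\frak{f}'\hat W_1\frak{f}$ fixed in Remark \ref{R: base de hatW para descripcion de morfismos en B-mod}. To check that under the composition above each $v^*$ is sent precisely to the prescribed morphism $f_v$ (respectively $f_v^\lambda$), I would combine the explicit formula for $\theta$ with the identity $\hat v(w)=v^*(\frak{f}'w\frak{f})\frak{f}'$ from the same remark, which together yield
$$\theta(v^*)(w)[\frak{f}]=v^*(\frak{f}'w\frak{f})\frak{f}'=\hat v(w)=\hat v(w)\frak{f}',$$
matching the defining formula for $\psi_v$ (resp.\ $\psi_v^\lambda$) in each item. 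Extending the result to all of $W_1$ as an $R$-$R$-bimodule map and prepending the zero first component gives exactly $f_v=(0,\hat\psi_v)$ (resp.\ $f_v^\lambda=(0,\hat\psi_v^\lambda)$), so the bijection is established.

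It then only remains to read off $\frak{f}$ and $\frak{f}'$ case by case: in (1) they are the given non-marked idempotents; in (2), since $S_\lambda=k\frak{f}_0$ as a vector space, one takes $\frak{f}=\frak{f}'=\frak{f}_0$; in (3) the source is $\frak{f}_0$ and the target is the non-marked $\frak{f}$, so the relevant dual space is $D(\frak{f}\hat W_1\frak{f}_0)$ with basis indexed by $\hueca{B}_{\frak{f},\frak{f}_0}$; and (4) is symmetric, with source $\frak{f}$ and target $\frak{f}_0$. I do not anticipate any substantive obstacle: Lemmas \ref{L: descripcion de rad(N,N')} and \ref{L: para construir morfismos entre simples}, together with the bimodule freeness of $W_1$, already do the heavy lifting, and what is left is careful bookkeeping about which idempotent sits on which side of $\hat W_1$ and about the tacit identification of each $h^1$ with its extension $\hat h^1$.
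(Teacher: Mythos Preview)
Your proposal is correct and follows essentially the same route as the paper: the paper also composes the isomorphisms from Lemmas~\ref{L: descripcion de rad(N,N')} and~\ref{L: para construir morfismos entre simples} with the extension map to identify $\rad_{\cal B}(N,N')$ with $D(\frak{f}'\hat W_1\frak{f})$, and then checks via the same computation $\theta(v^*)(w)[\frak{f}]=v^*(\frak{f}'w\frak{f})\frak{f}'=\hat v(w)\frak{f}'$ that the dual basis $\{v^*\}$ is sent to the prescribed morphisms. Your explicit case-by-case identification of the idempotents is slightly more detailed than the paper's treatment, but the argument is the same.
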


\begin{proof}  In each case, we have to describe $\rad_{\cal B}(N,N')$ with $N$ and $N'$ simple ${\cal B}$-modules. Consider the idempotents $\frak{f}, \frak{f}'\in \frak{F}$ such that, as vector spaces,  $N=k\frak{f}$ and $N'=k\frak{f}'$, and take $v\in \hueca{B}_{\frak{f}',\frak{f}}$.  We have the isomorphisms:
$$\begin{matrix}
D(\frak{f}'\hat{W}_1\frak{f})
&\cong &
\Hom_{S\g S}(\hat{W}_1,\Hom_k(N,N'))\hfill\\
&\cong&
\Hom_{R\g R}(W_1,\Hom_k(N,N'))\hfill\\
&\cong&
\rad_{\cal B}(N,N'),
\hfill\\
\end{matrix}$$
where the first one is given in (\ref{L: para construir morfismos entre simples}), the second  is the extension, and the third one follows from (\ref{L: descripcion de rad(N,N')}). Now, we notice that the $k$-basis $\{v^*\mid v\in \hueca{B}_{\frak{f}',\frak{f}}\}$ of $D(\frak{f}'\hat{W}_1\frak{f})$ is mapped by this composition to the corresponding proposed basis of radical morphisms. In order to verify this, we  notice that, for $w\in \hat{W}_1$, we have  
$$\theta(v^*)(w)[\frak{f}]=v^*(\frak{f}'w\frak{f})\frak{f}'=\hat{v}(w)\frak{f}'.$$
\end{proof}

\begin{remark}[{\bf Choice of a special basis $\hueca{B}$}]\label{R: choice of special basis} Let us show that we can choose the basis $\hueca{B}$ in such a way that, for each $t\in [1,n]$, the linear isomorphism   
$$F_\vert:\rad_{\cal B}(S_{\frak{f}_{l_t}},S_{\frak{f}_{r_t}})\rightmap{}\rad_{{\cal P}^1(\Lambda)}(\hueca{L}_t,\hueca{R}_t),$$ 
induced by $F:\Hom_{\cal B}(S_{\frak{f}_{l_t}},S_{\frak{f}_{r_t}})\rightmap{}\Hom_{{\cal P}^1(\Lambda)}(\hueca{L}_t,\hueca{R}_t)$ 
is such that, with the notation of (\ref{L: bases para rad entre simples}), we have  $F(f_{u_t})=\gamma_t$, for a suitable $u_t\in \hueca{B}_{\frak{f}_{r_t},\frak{f}_{l_t}}$. Notice that $F_\vert$ is indeed an isomorphism because $F$ is full and faithful. 
As we know, $\hueca{L}_i\not\cong \hueca{L}_j$ and $\hueca{R}_i\not\cong \hueca{R}_j$, for any different $i,j\in [1,n]$. 

Recall that $\hueca{B}=\bigcup_{\frak{f}',\frak{f}\in \frak{F}}\hueca{B}_{\frak{f}',\frak{f}}$. We will only replace the sets  
$\hueca{B}_{\frak{f}_{r_t},\frak{f}_{l_t}}$, for $t\in [1,n]$. 
For a fixed $t\in [1,n]$, assume that $F(g_t)=\gamma_t$ with $g_t\in \rad_{\cal B}(S_{f_{l_t}},S_{f_{r_t}})$. 
The isomorphism $\phi:D(\frak{f}_{r_t}\hat{W}_1\frak{f}_{l_t})\rightmap{}\rad_{\cal B}(S_{\frak{f}_{l_t}},S_{\frak{f}_{r_t}})$ maps some 
$\xi_t\in D(\frak{f}_{r_t}\hat{W}_1\frak{f}_{l_t})$ onto $\phi(\xi_t)=g_t=(0,\hat{g}^1_t)$. So the morphism of $R$-$R$-bimodules $\hat{g}^1_t$ extends the morphism $g^1_t\in \Hom_{S\g S}(\hat{W}_1,\Hom_k(S_{\frak{f}_{l_t}},S_{\frak{f}_{r_t}}))$ such that 
$g^1_t(v)[\frak{f}_{l_t}]
=
\theta(\xi_t)(v)[\frak{f}_{l_t}]
=
\xi_t(\frak{f}_{r_t}v\frak{f}_{l_t})\frak{f}_{r_t}$, for $v\in \hat{W}_1$. Recall that $\gamma_t\not=0$, so $\xi_t\not=0$. Consider the vector space $K_t:=\Ker (\xi_t)\leq \frak{f}_{r_t}\hat{W}_1\frak{f}_{l_t}$ and an element $u_t\in \frak{f}_{r_t}\hat{W}_1\frak{f}_{l_t}$ such that $\xi_t(u_t)=1$.  Hence we can consider a $k$-basis 
$\hueca{B}'_{\frak{f}_{r_t},\frak{f}_{l_t}}$ for $\frak{f}_{r_t}\hat{W}_1\frak{f}_{l_t}$ formed by a $k$-basis of $K_t$ together with $u_t$. With this basis in mind, we have 
$u_t^*=\xi_t$.  
So we get $g^1_t(v)[\frak{f}_{l_t}]
=\hat{u}_t(v)\frak{f}_{r_t}$. Hence, replacing 
$\hueca{B}_{\frak{f}_{r_t},\frak{f}_{l_t}}$ by $\hueca{B}'_{\frak{f}_{r_t},\frak{f}_{l_t}}$, we get, with the notation of (\ref{L: bases para rad entre simples}) relative to the new basis $\hueca{B}'$, that $F(f_{u_t})=\gamma_t$. 

We make such replacements for each $t\in [1,n]$ and we obtain what we wanted. From now on, we work with this new basis, which we denote again by $\hueca{B}$. 
\end{remark}

\begin{lemma}\label{L: radEnd(k(x)f0}
We have the following vector space decomposition of the radical of the endomorphism algebra $\End_{\cal B}(k(x)\frak{f}_0)$:
$$\rad\End_{\cal B}(k(x)\frak{f}_0)=\bigoplus_{v\in \hueca{B}_{\frak{f}_0,\frak{f}_0}}{\cal H}_v$$
where ${\cal H}_v$ is the space of morphisms $f_{v,q}=(0,\hat{\psi}_{v,q})$ with $q\in \End_k(k(x)\frak{f}_0)$ and $\psi_{v,q}(w)[z]=\hat{v}(w)q(z)$, for $w\in \hat{W}_1$ and $z\in k(x)\frak{f}_0$. 
\end{lemma}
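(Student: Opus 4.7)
The plan is to split the argument into two stages: first identify $\rad\End_{\cal B}(k(x)\frak{f}_0)$ with the subspace of morphisms whose $R$-linear component vanishes, and then decompose that subspace along the basis $\hueca{B}_{\frak{f}_0,\frak{f}_0}$ exactly in the spirit of Lemma~\ref{L: bases para rad entre simples}.

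For the first stage, the key observation is that $R\frak{f}_0=k[x]_h\frak{f}_0$ acts on $k(x)\frak{f}_0$ by scalar multiplication, so $\End_R(k(x)\frak{f}_0)\cong k(x)$ is a field. Since ${\cal B}$ is minimal we have $W_0=0$, and the composition rule gives $(gf)^0=g^0f^0$; hence sending $(f^0,f^1)$ to the scalar representing $f^0$ defines a surjective $k$-algebra homomorphism $\pi:\End_{\cal B}(k(x)\frak{f}_0)\to k(x)$ whose kernel $\{(0,f^1)\}$ is therefore a maximal two-sided ideal containing the Jacobson radical. To obtain the reverse inclusion I plan to mimic the argument of Lemma~\ref{L: descripcion de rad(N,N')}: for $f=(0,f^1)$ and arbitrary $g\in\End_{\cal B}(k(x)\frak{f}_0)$ one has $(fg)^0=0$, so $id-fg=(id,\eta)$ for some $\eta\in\Hom_{R\g R}(W_1,\End_k(k(x)\frak{f}_0))$. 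A right inverse $(id,h^1)$ must satisfy a recursion of the form $h^1(w)=-\eta(w)-\sum \eta(w_{(1)})h^1(w_{(2)})$ coming from the composition formula, where $\delta(w)=\sum w_{(1)}\otimes w_{(2)}$; by the triangular seminested structure of the layer of ${\cal B}$ the tensor factors $w_{(1)},w_{(2)}$ involve only basis elements strictly below $w$ in the ordered basis $\hueca{B}$ of $\hat{W}_1$, so the recursion solves uniquely and $(id,\eta)$ admits a right inverse. Hence $(0,f^1)\in\rad\End_{\cal B}(k(x)\frak{f}_0)$, and together with the maximality of $\ker\pi$ this yields the desired equality.

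For the second stage I will use the isomorphism $W_1\cong R\otimes_S\hat{W}_1\otimes_SR$ of Remark~\ref{R: morfismos en B-Mod} to identify an $R$-$R$-bilinear map $f^1$ with its restriction $\hat{f}^{\,1}\in\Hom_{S\g S}(\hat{W}_1,\End_k(k(x)\frak{f}_0))$. The $S$-$S$-bimodule $\End_k(k(x)\frak{f}_0)$ is concentrated on its $(\frak{f}_0,\frak{f}_0)$-component because $\frak{f}_z\cdot k(x)\frak{f}_0=0$ for every primitive $\frak{f}_z\neq\frak{f}_0$, so $\hat{f}^{\,1}$ vanishes on $\frak{f}'\hat{W}_1\frak{f}$ unless $\frak{f}=\frak{f}'=\frak{f}_0$; and on $\frak{f}_0\hat{W}_1\frak{f}_0$ the two-sided action of $S$ is by scalars, so $S$-$S$-bilinearity reduces to plain $k$-linearity. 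Expanding $\hat{f}^{\,1}$ against the basis $\hueca{B}_{\frak{f}_0,\frak{f}_0}$, and using that $\hat{v}(v)=\frak{f}_0$ while $\hat{v}(w)=0$ for the remaining basis elements $w\in\hueca{B}$ (Remark~\ref{R: base de hatW para descripcion de morfismos en B-mod}), I obtain the expansion $f=\sum_v f_{v,q_v}$ with $q_v:=\hat{f}^{\,1}(v)\in\End_k(k(x)\frak{f}_0)$; independence of the summands against this basis gives the direct sum.

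The main obstacle I anticipate lies in the first stage: the simple-module analogue Lemma~\ref{L: descripcion de rad(N,N')} relied on nilpotency of $(0,h^1)$ in the local finite-dimensional algebra $\End_{\cal B}(N')$, whereas $(0,h^1)\in\End_{\cal B}(k(x)\frak{f}_0)$ need not be nilpotent here, so invertibility of $(id,\eta)$ must instead be extracted from the triangular seminested structure of the layer via the recursive solution described above.
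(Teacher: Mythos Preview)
Your proposal is correct and the second stage is essentially identical to the paper's argument (you recover $q_v=\hat f^{\,1}(v)$ exactly as the paper does with $q_v=\psi(v)$).

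The first stage is where you diverge. The paper simply asserts that $(0,f^1)$ is nilpotent in $\End_{\cal B}(k(x)\frak{f}_0)$, so $E/\{(0,f^1)\}\cong k(x)$ and the kernel equals $\rad E$ because the quotient is a field and the kernel consists of nilpotents. Your concern that nilpotency might fail in the infinite-dimensional setting is unwarranted: the nilpotency of $(0,f^1)$ in \cite{BSZ}(5.4) comes from the triangular structure of the layer, not from finite-dimensionality of the module. Concretely, the $n$-fold composite $(0,f^1)^n$ has second component built from the iterated differential $(\delta\otimes 1\otimes\cdots)\cdots(\delta\otimes 1)\delta$ on $W_1$, and triangularity of the seminested layer forces this to vanish once $n$ exceeds the length of the filtration of the finite-dimensional space $\hat W_1$; the module plays no role. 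So the paper's route is shorter. Your recursive construction of a right inverse is not wrong---it is essentially a hands-on unwinding of the same triangularity---but it is more work than needed, and in fact the termination of your recursion is equivalent to the nilpotency you were trying to avoid.
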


\begin{proof} Define $E:=\End_{\cal B}(k(x)\frak{f}_0)$  and  recall that any morphism $f\in E$ is a sum $f=(f^0,f^1)=(f^0,0)+(0,f^1)$ of morphisms in $E$. Here $f^0\in \End_R(k(x)\frak{f}_0)\cong k(x)$ and $(0,f^1)$ is nilpotent. 
Thus non-invertible elements $f$ of $E$ are such that $f^0=0$.  Thus the elements in $\rad E$ are those of the form $(0,\hat{\psi})$, where $\hat{\psi}$ is the extension of some $\psi\in \Hom_{S\g S}(\hat{W}_1,\Hom_k(k(x)\frak{f}_0,k(x)\frak{f}_0))$.  

For each linear transformation $q$, we have a linear map 
$$\theta_q:D(\frak{f}_0\hat{W}_1\frak{f}_0)\rightmap{}\Hom_{S\g S}(\hat{W}_1,\Hom_k(k(x)\frak{f}_0,k(x)\frak{f}_0)),$$
  such that $\theta_q(\xi)(w)[z]=\xi(\frak{f}_0w\frak{f}_0)q(z)$, for $\xi\in D(\frak{f}_0\hat{W}_1\frak{f}_0)$, $w\in \hat{W}_1$ and $z\in k(x)\frak{f}_0$. For $v\in \hueca{B}_{\frak{f}_0,\frak{f}_0}$, 
  we have $\theta_q(v^*)(w)[z]=\hat{v}(w)q(z)$, so $\theta_q(v^*)=\psi_{v,q}$. 
Thus, as remarked at the beginning of this proof, the morphisms $f_{v,q}$ belong to $\rad E$. If $q=\lambda q_1+q_2$, then $f_{v,q}=\lambda f_{v,q_1}+f_{v,q_2}$, so ${\cal H}_v$ is indeed a vector subspace. 

Now, assume that $\sum_{i=1}^mf_{v_i,q_i}=0$, for some different elements $v_1,\ldots,v_m\in \hueca{B}_{\frak{f}_0,\frak{f}_0}$ and some non-zero linear maps $q_1,\ldots,q_m$. If $q_1(z_1)\not=0$,  evaluating the second components of the morphisms $f_{v_i,q_i}$ in $v_1$, we obtain 
$0\not=q_1(z_1)=\hat{v}_1(v_1)q_1(z_1)=\psi_{v_1,q_1}(v_1)[z_1]$, but
 $\sum_{i=2}^m\psi_{v_i,q_i}(v_1)[z_1]
=\sum_{i=2}^m\hat{v}_i(v_1)q_i(z_1)=0$, a contradiction.  

It remains to show that each $f=(0,\psi)\in \rad E$ is a finite sum of elements of the form  $f_{v,q}$. This is the case because we have 
$$\psi=\sum_{v\in \hueca{B}_{\frak{f}_0,\frak{f}_0}}f_{v,q_v},$$
where $q_v:=\psi(v)$, for $v\in \hueca{B}_{\frak{f}_0,\frak{f}_0}$. 
Indeed, notice that, for $w\in \hat{W}_1$, we have $\psi(w)=\psi(\frak{f}_0w\frak{f}_0)$. So, $\psi$  is determined by the values $\psi(w)$, with $w\in \hueca{B}_{\frak{f}_0,\frak{f}_0}$. For $w\in \hueca{B}_{\frak{f}_0,\frak{f}_0}$ and $z\in k(x)\frak{f}_0$, we have 
$$\sum_v\psi_{v,q_v}(w)[z]=\sum_v\hat{v}(w)q_v(z)=q_w(z)=\psi(w)[z].$$
From this, we get the wanted equality.
\end{proof}

\begin{lemma}\label{L: decompos of HomB(gen,sumadesimples}
We have the following vector space decompositions:
\begin{enumerate}
\item For any non-marked idempotent $\frak{f}$ and $V\in k\g\Mod$, we have 
$$\Hom_{\cal B}
(k(x)\frak{f}_0,S_{\frak{f}}\otimes_kV)
=
\bigoplus_{v\in \hueca{B}_{\frak{f},\frak{f}_0}}{\cal H}_v^a$$
where ${\cal H}^a_v$ is the vector subspace formed by the morphisms $f_{v,q}=(0,\psi_{v,q})$, for some $q\in \Hom_k(k(x)\frak{f}_0,S_{\frak{f}}\otimes_kV)$, with $\psi_{v,q}(w)[z]=\hat{v}(w)q(z)$. 
\item For any non-marked idempotent $\frak{f}$ and $V\in k\g\Mod$, we have 
$$\Hom_{\cal B}
(S_{\frak{f}}\otimes_kV,k(x)\frak{f}_0)
=
\bigoplus_{v\in \hueca{B}_{\frak{f}_0},\frak{f}}{\cal H}_v^b$$
where ${\cal H}^b_v$ is the vector subspace formed by the morphisms $f_{v,q}=(0,\psi_{v,q})$, for some $q\in \Hom_k(S_{\frak{f}}\otimes_kV,k(x)\frak{f}_0)$, with $\psi_{v,q}(w)[z]=\hat{v}(w)q(z)$. 
\end{enumerate}
\end{lemma}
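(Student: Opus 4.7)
The plan is to mimic the argument of Lemma~\ref{L: radEnd(k(x)f0}, adapted to the asymmetric Hom-spaces here. I will spell out part (1); part (2) is entirely symmetric, with source and target swapped and $\hueca{B}_{\frak{f},\frak{f}_0}$ replaced by $\hueca{B}_{\frak{f}_0,\frak{f}}$. Write $X:=k(x)\frak{f}_0$ and $Y:=S_{\frak{f}}\otimes_k V$. First I would observe that any $f=(f^0,f^1)\in \Hom_{\cal B}(X,Y)$ has $f^0=0$: the source satisfies $X=\frak{f}_0 X$ and the target $Y=\frak{f}Y$ with $\frak{f}\neq \frak{f}_0$, since $\frak{f}_0$ is the unique marked idempotent of ${\cal B}$, so $f^0(x)=f^0(\frak{f}_0 x)=\frak{f}_0 f^0(x)\in \frak{f}_0 Y=0$. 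Hence by the freeness of $W_1$ as an $R$-$R$-bimodule (Remark~\ref{R: morfismos en B-Mod}), the problem reduces to describing $\Hom_{S\g S}(\hat{W}_1,\Hom_k(X,Y))$.

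For any $\psi\in \Hom_{S\g S}(\hat{W}_1,\Hom_k(X,Y))$ and $w\in \frak{f}'\hat{W}_1\frak{f}''$, the bimodule constraint gives $\psi(w)(x)=\frak{f}'\psi(w)(\frak{f}''x)$, which vanishes identically unless $(\frak{f}',\frak{f}'')=(\frak{f},\frak{f}_0)$. So $\psi$ is determined by the family $\{\psi(v)\mid v\in \hueca{B}_{\frak{f},\frak{f}_0}\}$, and I only need to see these can be freely prescribed. The candidate is $\psi_{v,q}(w)[z]:=\hat{v}(w)q(z)$ for $v\in \hueca{B}_{\frak{f},\frak{f}_0}$ and $q\in \Hom_k(X,Y)$. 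Left $S$-linearity is immediate from $\hat{v}\in \Hom_S({}_S\hat{W}_1,S)$. For right $S$-linearity I would decompose any $s'\in S$ into the primitive idempotents of $\frak{F}$ and check $\hat{v}(w\frak{f}')q(z)=\hat{v}(w)q(\frak{f}'z)$ for each $\frak{f}'$: using that $\hat{v}\in \frak{f}_0({}^*\hat{W}_1)\frak{f}$ with the bimodule structure of Lemma~\ref{L: isos entre duales}, $\hat{v}(w\frak{f}')=0$ when $\frak{f}'\neq \frak{f}_0$, and likewise $q(\frak{f}'z)=0$ because $z\in \frak{f}_0 X$; when $\frak{f}'=\frak{f}_0$ both sides equal $\hat{v}(w)q(z)$. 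Extending $\psi_{v,q}$ to $W_1$ yields a genuine morphism $f_{v,q}\in \Hom_{\cal B}(X,Y)$, and ${\cal H}^a_v$ is a subspace since $q\mapsto f_{v,q}$ is linear.

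To conclude, for any $\psi$ set $q_v:=\psi(v)$ for $v\in \hueca{B}_{\frak{f},\frak{f}_0}$: evaluating $\sum_v \psi_{v,q_v}$ on a basis vector $v'\in \hueca{B}$ and $z\in X$ produces the single surviving term $\hat{v}(v')q_v(z)$ at $v=v'\in \hueca{B}_{\frak{f},\frak{f}_0}$, which equals $\psi(v')(z)$; both sides vanish otherwise. For directness, a relation $\sum_{i=1}^m f_{v_i,q_i}=0$ with distinct $v_i\in \hueca{B}_{\frak{f},\frak{f}_0}$ and some $q_1\neq 0$ is ruled out by picking $z\in X$ with $q_1(z)\neq 0$ and evaluating the second-component formula at $v_1\in \hat{W}_1$, as in Lemma~\ref{L: radEnd(k(x)f0}. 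The delicate point, and the only real obstacle, is the right $S$-linearity verification for $\psi_{v,q}$ with arbitrary $q$: it succeeds precisely because $X$ and $Y$ are each concentrated at a single primitive idempotent, so the potential off-diagonal contributions on the two sides of the identity vanish simultaneously.
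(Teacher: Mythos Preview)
Your proof is correct and follows essentially the same route as the paper's. The paper argues that $\Hom_R(k(x)\frak{f}_0,S_{\frak{f}}\otimes_kV)=0$ to kill the first component, then packages the construction of $\psi_{v,q}$ via a map $\theta_q:D(\frak{f}\hat{W}_1\frak{f}_0)\to\Hom_{S\g S}(\hat{W}_1,\Hom_k(X,Y))$ analogous to (\ref{L: para construir morfismos entre simples}), and defers the spanning and directness arguments to the proof of (\ref{L: radEnd(k(x)f0}); your direct verification of $S$-$S$-bilinearity and explicit spanning/independence check amount to the same thing unpacked.
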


\begin{proof} (1): Since $\Hom_R(k(x)\frak{f}_0,S_{\frak{f}}\otimes_kV)=0$, the first component of the morphisms in $\Hom_{\cal B}
(k(x)\frak{f}_0,S_{\frak{f}}\otimes_kV)$ is zero. 
 Thus the elements in this space are those of the form $(0,\hat{\psi})$, where $\hat{\psi}$ is the extension of some morphism $\psi\in \Hom_{S\g S}(\hat{W}_1,\Hom_k(k(x)\frak{f}_0,S_{\frak{f}}\otimes_kV))$.  

For each linear map $q\in \Hom_k(k(x)\frak{f}_0,S_{\frak{f}}\otimes_kV))$, we have a linear map 
$$\theta_q:D(\frak{f}\hat{W}_1\frak{f}_0)\rightmap{}\Hom_{S\g S}(\hat{W}_1,\Hom_k(k(x)\frak{f}_0,S_{\frak{f}}\otimes_kV)),$$
  such that $\theta_q(\xi)(w)[z]=\xi(\frak{f}w\frak{f}_0)q(z)$, for $\xi\in D(\frak{f}\hat{W}_1\frak{f}_0)$, $w\in \hat{W}_1$, and $z\in k(x)\frak{f}_0$. 
  We have $\theta_q(v^*)(w)[z]=\hat{v}(w)q(z)$, so $\theta_q(v^*)=\psi_{v,q}$. 
Thus, as remarked at the beginning of this section,  $f_{v,q}\in \Hom_{\cal B}(k(x)\frak{f}_0,S_{\frak{f}}\otimes_kV)$. The rest of the proof is similar to the proof of (\ref{L: radEnd(k(x)f0}). 

The proof of (2) is similar. 
\end{proof}

\section{Factoring morphisms of ${\cal B}$-modules}

In the following, we keep the notation of the last section. We will denote by $\delta:T_R(W)\rightmap{}T_R(W)$ the differential 
of the minimal ditalgebra ${\cal B}$.

\begin{remark}\label{R: compos de segundas comp en B-Mod} Assume that we have composable morphisms in ${\cal B}\g\Mod$ of the form 
$(0,\psi_1):N_1\rightmap{}N_2$ and $(0,\psi_2):N_2\rightmap{}N_3$.
That is 
$$\psi_1\in \Hom_{R\g R}(W_1,\Hom_k(N_1,N_2)) \hbox{ \ and \ } \psi_2\in \Hom_{R\g R}(W_1,\Hom_k(N_2,N_3)).$$
By definition of the composition in ${\cal B}\g\Mod$, we see that $(0,\psi_2)(0,\psi_1)=(0,\psi)$, with $\psi\in \Hom_{R\g R}(W_1,\Hom_k(N_1,N_3))$  such that 
$\psi(v)=\pi (\psi_2\otimes \psi_1)(\delta(v))$, for any $v\in W_1$, where $\pi$ denotes the composition in  
$$W_1\otimes_RW_1\rightmap{ \ \psi_2\otimes\psi_1 \ }\Hom_k(N_2,N_3)\otimes_R\Hom_k(N_1,N_2)\rightmap{\pi}\Hom_k(N_1,N_3).$$
Similarly, if we have another morphism of the form $(0,\psi_3): N_3\rightmap{}N_4$, then the triple  composition has the form 
$(0,\psi_3)(0,\psi_2)(0,\psi_1)=(0,\psi')$, where 
$\psi'(v)=\pi(\psi_3\otimes\psi_2\otimes\psi_1)(\delta\otimes 1)\delta(v)$, for $v\in \hueca{B}$, where 
$$W_1\otimes_RW_1\otimes_RW_1\rightmap{ \  \psi_3\otimes  \psi_2\otimes\psi_1 \ }\Hom_k(N_3,N_4)\otimes_R\Hom_k(N_2,N_3)\otimes_R\Hom_k(N_1,N_2)$$
and $\pi$ is the composition map to $\Hom_k(N_1,N_4)$, see \cite[(2.3)]{BSZ}. 

In order to work with the compositions of some special morphisms in ${\cal B}\g\Mod$, we have to deal with the differential $\delta$ of ${\cal B}$. We will need descriptions for the values $\delta(v)$ and $(1\otimes \delta)\delta(v)=(\delta\otimes 1)\delta(v)$, for some $v\in \hueca{B}$. 
\end{remark}

\begin{remark}\label{R: descriptions of delta and}
 We have $R_0=R\frak{f}_0=k[x]_{h(x)}\frak{f}_0$ and then $R_0\otimes_kR_0\cong k[x,y]_{h(x)h(y)}$. So the $R_0\g R_0$-bimodule $\frak{f}_0W_1\frak{f}_0$ is a $k[x,y]_{h(x)h(y)}$-module.  

We will consider the $R_0\g R_0$-bimodules 
$$\frak{f}_0W_1\otimes_R W_1\frak{f}_0=[\bigoplus_{z\in \frak{Z}}\frak{f}_0W_1\frak{f}_z\otimes_R\frak{f}_zW_1\frak{f}_0]\oplus H, $$
where  $H=\frak{f}_0W_1\frak{f}_0\otimes_{R}\frak{f}_0W_1\frak{f}_0$,
 and 
$$\frak{f}_0W_1\otimes_RW_1\otimes_RW_1\frak{f}_0=
[\bigoplus_{z,z'\in \frak{Z}}\frak{f}_0W_1\frak{f}_{z'}\otimes_R\frak{f}_{z'}W_1\frak{f}_z\otimes_R \frak{f}_zW_1\frak{f}_0 ]\oplus H',$$
where $H'$ is the direct sum of the bimodules 
$
\frak{f}_0W_1\frak{f}_0\otimes_{R}\frak{f}_0W_1\frak{f}_0\otimes_{R}\frak{f}_0W_1\frak{f}_0$,  
$$\bigoplus_{z\in \frak{Z}}\frak{f}_0W_1\frak{f}_0\otimes_{R}\frak{f}_0W_1\frak{f}_z\otimes_R\frak{f}_zW_1\frak{f}_0,  
\hbox{ and }
\bigoplus_{z\in \frak{Z}}\frak{f}_0W_1\frak{f}_z\otimes_R\frak{f}_zW_1\frak{f}_0\otimes_{R}\frak{f}_0W_1\frak{f}_0.
$$ 
Given $v_n,\ldots,v_1\in \hueca{B}$, with $v_1\in \hueca{B}_{\frak{f}_1,\frak{f}_0}, v_2\in \hueca{B}_{\frak{f}_2,\frak{f}_1},\ldots, v_n\in \hueca{B}_{\frak{f}_0,\frak{f}_{n-1}}$, 
an element $\tau\in R\frak{f}_0(v_n\otimes\cdots\otimes v_1)R\frak{f}_0$ can be written as $$\tau=c_{v_1,\ldots,v_n}(x,y)(v_n\otimes\cdots\otimes v_1),$$
where  $c_{v_1,\ldots,v_n}(x,y)\in k[x,y]_{h(x)h(y)}$. Each polynomial $c(x,y)\in k[x,y]$, has the form $c(x,y)=\sum_{i=0}^ma_i(y)x^i$, with $a_i(y)\in k[y]$, and for $v_1\in \hueca{B}_{\frak{f},\frak{f}_0}$ and $v_2\in \hueca{B}_{\frak{f}_0,\frak{f}}$,  we have 
$$c(x,y)(v_2\otimes v_1)=\sum_{i=0}^ma_i(x)v_2\otimes v_1 x^i.$$
For $v_1\in \hueca{B}_{\frak{f},\frak{f}_0}$, $v_2\in \hueca{B}_{\frak{f}',\frak{f}}$, and $v_3\in \hueca{B}_{\frak{f}_0,\frak{f}'}$, we have 
$$c(x,y)(v_3\otimes v_2\otimes v_1)=\sum_{i=0}^ma_i(x)v_3\otimes v_2\otimes v_1 x^i.$$

Let us define  
$\hueca{B}^a:=\bigcup_{z\in \frak{Z}}
\hueca{B}_{\frak{f}_z,\frak{f}_0}$ and 
$\hueca{B}^b:=\bigcup_{z\in \frak{Z}}
\hueca{B}_{\frak{f}_0,\frak{f}_z}$. Then, notice that  replacing  each $v \in \hueca{B}_{\frak{f}_0,\frak{f}_0}$ by $h(x)^mvh(x)^m$, for a suitable $m\in \hueca{N}$, we may assume that we have  for each $v\in \hueca{B}_{\frak{f}_0,\frak{f}_0}$ polynomials $c^v_{v_2,v_1}(x,y), c^v_{v_3,v_2,v_1}(x,y)\in k[x,y]$ such that
$$\delta(v)=\sum_{v_2\in \hueca{B}^b, v_1\in \hueca{B}^a}c^v_{v_2,v_1}(x,y)(v_2\otimes v_1)+r(v),$$
where $r(v)\in H$, and, if we set $\hueca{B}_{\frak{Z}}:=\bigcup_{z,z'\in \frak{Z}}\hueca{B}_{\frak{f}_{z'},\frak{f}_z}$,  
$$(1\otimes \delta)\delta(v)=(\delta\otimes 1)\delta(v)
=
\sum_{v_3\in\hueca{B}^b, 
v_2\in \hueca{B}_{\frak{Z}}, 
v_1\in \hueca{B}^a }
c^v_{v_3,v_2,v_1}(x,y)(v_3\otimes v_2\otimes v_1)+r'(v) , 
$$
where $r'(v)\in H'$. We will keep this notation from now on. 
\end{remark}

\begin{lemma}\label{L: de las lambda-compos}
Given $\lambda\in D(h)$ and $\frak{f},\frak{f}'\not=\frak{f}_0$,  we have:
\begin{enumerate}
\item For $v_1\in \hueca{B}_{\frak{f},\frak{f}_0}$ and $v_2\in \hueca{B}_{\frak{f}_0,\frak{f}}$, we have 
$$f_{v_2}^\lambda f_{v_1}^\lambda=(0,\psi_{v_2,v_1}^\lambda), \hbox{ with } \psi_{v_2,v_1}^\lambda(v)=c^v_{v_2,v_1}(\lambda,\lambda)id_{S_\lambda}, \hbox{ for }v\in \hueca{B}_{\frak{f}_0,\frak{f}_0}.$$
\item For $v_1\in \hueca{B}_{\frak{f},\frak{f}_0}$, 
 $v_2\in \hueca{B}_{\frak{f}',\frak{f}}$, and $v_3\in \hueca{B}_{\frak{f}_0,\frak{f}'}$ we have 
$$f^\lambda_{v_3} f_{v_2} f_{v_1}^\lambda=(0,\psi_{v_3,v_2,v_1}^\lambda), \hbox{ with } \psi_{v_3,v_2,v_1}^\lambda(v)=c^v_{v_3,v_2,v_1}(\lambda,\lambda)id_{S_\lambda}, \hbox{ for }v\in \hueca{B}_{\frak{f}_0,\frak{f}_0}.$$
\end{enumerate}
\end{lemma}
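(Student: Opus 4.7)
The strategy is a direct computation, using the composition rule for morphisms in ${\cal B}\g\Mod$ recalled in (\ref{R: compos de segundas comp en B-Mod}), together with the decompositions of $\delta(v)$ and $(\delta\otimes 1)\delta(v)$ set up in (\ref{R: descriptions of delta and}). The input is a carefully recorded fact about each $\psi^\lambda_{u}$ (resp.\ $\psi_u$): being a bimodule extension of a functional on $\hat{W}_1$ that is supported on the single basis element $u$, it vanishes on every element of $\hat{W}_1$ outside the $R$-$R$-sub-bimodule generated by $u$. In particular, if $u\in\hueca{B}_{\frak{f}',\frak{f}}$ with $\frak{f},\frak{f}'\neq\frak{f}_0$ (the case $u=v_2$ below) or one of $\frak{f},\frak{f}'$ different from $\frak{f}_0$ (the cases $u=v_1,v_2,v_3$), then $\psi^\lambda_u$ and $\psi_u$ vanish on every element of $\frak{f}_0W_1\frak{f}_0$ and, more generally, on elements of the form $rwr'$ with $w\in\hueca{B}_{\frak{f}_0,\frak{f}_0}$.

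For part (1), by (\ref{R: compos de segundas comp en B-Mod}), $f^\lambda_{v_2}f^\lambda_{v_1}=(0,\psi^\lambda_{v_2,v_1})$ with $\psi^\lambda_{v_2,v_1}(v)=\pi(\psi^\lambda_{v_2}\otimes \psi^\lambda_{v_1})(\delta(v))$ for every $v\in\hueca{B}_{\frak{f}_0,\frak{f}_0}$. Insert the decomposition
$\delta(v)=\sum_{u_2,u_1}c^{v}_{u_2,u_1}(x,y)(u_2\otimes u_1)+r(v)$ from (\ref{R: descriptions of delta and}). The remainder $r(v)\in H=\frak{f}_0W_1\frak{f}_0\otimes_{R_0}\frak{f}_0W_1\frak{f}_0$ dies because the second tensor factor is annihilated by $\psi^\lambda_{v_1}$, by the observation above. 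In the principal sum, only the term $(u_2,u_1)=(v_2,v_1)$ survives, since $\psi^\lambda_{v_1},\psi^\lambda_{v_2}$ are non-zero only on the basis vectors $v_1,v_2$. Writing $c^v_{v_2,v_1}(x,y)=\sum_i a_i(y)x^i$, one expands
$$c^v_{v_2,v_1}(x,y)(v_2\otimes v_1)=\sum_i a_i(x)\,v_2\otimes v_1 x^i,$$
and evaluates term by term using the bimodule equivariance: $\psi^\lambda_{v_1}(v_1x^i)$ sends $\frak{f}_0\mapsto \lambda^i\frak{f}$ (because $x$ acts as $\lambda$ on $S_\lambda$), $\psi^\lambda_{v_2}(a_i(x)v_2)$ sends $\frak{f}\mapsto a_i(\lambda)\frak{f}_0$. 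Composing and summing yields $\big(\sum_i a_i(\lambda)\lambda^i\big)\,id_{S_\lambda}=c^v_{v_2,v_1}(\lambda,\lambda)\,id_{S_\lambda}$, proving~(1).

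Part (2) is entirely parallel but applied to $(\delta\otimes 1)\delta(v)$ in place of $\delta(v)$. Using (\ref{R: descriptions of delta and}) again, write
$(1\otimes\delta)\delta(v)=\sum_{u_3,u_2,u_1} c^v_{u_3,u_2,u_1}(x,y)(u_3\otimes u_2\otimes u_1)+r'(v)$
with $r'(v)\in H'$. Each of the three summands of $H'$ has at least one tensor factor lying in $\frak{f}_0W_1\frak{f}_0$ at an extreme position (the first or the last), and the corresponding $\psi^\lambda_{v_1}$ or $\psi^\lambda_{v_3}$ annihilates it by the key observation. Thus $\pi(\psi^\lambda_{v_3}\otimes\psi_{v_2}\otimes\psi^\lambda_{v_1})(r'(v))=0$. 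In the principal sum only the single triple $(v_3,v_2,v_1)$ contributes, and writing $c^v_{v_3,v_2,v_1}(x,y)=\sum_i a_i(y)x^i$ one has
$$c^v_{v_3,v_2,v_1}(x,y)(v_3\otimes v_2\otimes v_1)=\sum_i a_i(x)\,v_3\otimes v_2\otimes v_1 x^i.$$
Chasing an element $\frak{f}_0\in S_\lambda$ through the three factors gives $\frak{f}_0\mapsto\lambda^i\frak{f}\mapsto\lambda^i\frak{f}'\mapsto \lambda^i a_i(\lambda)\frak{f}_0$, summing to $c^v_{v_3,v_2,v_1}(\lambda,\lambda)\,id_{S_\lambda}$, which is the required formula.

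I expect no conceptual obstacle; the main care is the bookkeeping of the $R$-$R$-bimodule action on $\Hom_k(-,-)$ spaces and the verification, using the precise structure of $H$ and $H'$ described in (\ref{R: descriptions of delta and}), that the error terms indeed vanish. The key saving feature is that the relevant $\psi^\lambda_{v_i}$ (with $i=1,2,3$) are supported on a basis element outside $\hueca{B}_{\frak{f}_0,\frak{f}_0}$, which kills every $R$-$R$-bimodule expression drawn from the central part of the tensor power.
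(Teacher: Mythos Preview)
Your proposal is correct and follows essentially the same approach as the paper's own proof: both use the composition rule from (\ref{R: compos de segundas comp en B-Mod}), insert the decompositions of $\delta(v)$ and $(1\otimes\delta)\delta(v)$ from (\ref{R: descriptions of delta and}), argue that the remainder terms $r(v)\in H$ and $r'(v)\in H'$ are killed because the relevant $\psi^\lambda_{v_i}$ vanish on $\frak{f}_0W_1\frak{f}_0$, and then evaluate the single surviving summand by expanding $c(x,y)=\sum_i a_i(y)x^i$ term by term. The only cosmetic difference is that the paper first isolates a general claim about $\pi(\psi^\lambda_{v_2}\otimes\psi^\lambda_{v_1})[c(x,y)(w_2\otimes w_1)]$ for arbitrary $c$, $w_1$, $w_2$ before specializing, whereas you specialize to the surviving term first and then compute; the content is identical.
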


\begin{proof} (1): We have
$$\psi^\lambda_{v_1}\in \Hom_{R\g R}(W_1,\Hom_k(S_\lambda,S_{\frak{f}})) \hbox{ \ and \ } \psi^\lambda_{v_2}\in \Hom_{R\g R}(W_1,\Hom_k(S_{\frak{f}},S_\lambda)).$$
We claim that, for  $w_1\in \hueca{B}^a$, $w_2\in \hueca{B}^b$, and  $c(x,y)\in k[x,y]$, we have 
$$\pi (\psi^\lambda_{v_2}\otimes\psi^\lambda_{v_1})[c(x,y)(w_2\otimes w_1)]=c(\lambda,\lambda)\hat{v}_2(w_2)\hat{v}_1(w_1)id_{S_\lambda}.$$  
Indeed, if $c(x,y)=\sum_{i=0}^ma_i(y)x^i$, we have
$$\begin{matrix}
\pi (\psi^\lambda_{v_2}\otimes\psi^\lambda_{v_1})[c(x,y)(w_2\otimes w_1)](\frak{f}_0)
&=&
\sum_{i=0}^m\pi(\psi^\lambda_{v_2}[a_i(x)w_2]\otimes \psi^\lambda_{v_1}[w_1x^i])(\frak{f}_0)\hfill\\
&=&
\sum_{i=0}^ma_i(\lambda)\lambda^i\hat{v}_2(w_2)\hat{v}_1(w_1)\frak{f}_0\hfill\\
&=&
c(\lambda,\lambda)\hat{v}_2(w_2)\hat{v}_1(w_1)\frak{f}_0.\hfill\\
\end{matrix}$$
Then, as remarked in (\ref{R: compos de segundas comp en B-Mod}), with the description of $\delta(v)$ given  in (\ref{R: descriptions of delta and}), we have 
$$\begin{matrix} 
\psi^\lambda_{v_2,v_1}(v)&=&\pi (\psi^\lambda_{v_2}\otimes\psi^\lambda_{v_1})(\delta(v))\hfill\\
&=&
\sum_{w_1\in \hueca{B}^a,w_2\in \hueca{B}^b}\pi (\psi^\lambda_{v_2}\otimes\psi^\lambda_{v_1})(c^v_{w_2,w_1}(x,y)(w_2\otimes w_1))\hfill\\
&&+\,\pi (\psi^\lambda_{v_2}\otimes\psi^\lambda_{v_1})(r(v)).\hfill\\
\end{matrix}$$
Since $r(v)\in H =\frak{f}_0W_1\frak{f}_0\otimes_{R}\frak{f}_0W_1\frak{f}_0$, it is a sum of tensor products of the form $d_3w_2\otimes d_2 w_1d_1$, where $w_1,w_2\in \hueca{B}_{\frak{f}_0,\frak{f}_0}$ and $d_1,d_2,d_3\in R_0$. This expression in terms of  basis  elements in $\hueca{B}$ does not contain the elements $v_1$ and $v_2$, thus  $(\psi^\lambda_{v_2}\otimes\psi^\lambda_{v_1})(r(v))=0$.  Then (1) follows from our claim and the definition of the maps $\psi_{v_1}^\lambda$ and $\psi_{v_2}^\lambda$. 
\medskip

\noindent(2): This is similar to the preceding one. We have 
$$\psi^\lambda_{v_1}\in 
\Hom_{R\g R}(W_1,\Hom_k(S_\lambda,S_{\frak{f}})), 
\hbox{ \  \ } \psi_{v_2}\in 
\Hom_{R\g R}(W_1,\Hom_k(S_{\frak{f}},S_{\frak{f}'})),$$
and 
$\psi^\lambda_{v_3}\in 
\Hom_{R\g R}(W,\Hom_k(S_{\frak{f}'},S_\lambda))$. 
Now, we claim that, for any $w_1\in \hueca{B}^a$, $w_2\in \hueca{B}_{\frak{Z}}$, $w_3\in \hueca{B}^b$, and   $c(x,y)\in k[x,y]$, we have 
$$\pi (\psi^\lambda_{v_3}\otimes\psi_{v_2}\otimes\psi^\lambda_{v_1})[c(x,y)(w_3\otimes w_2\otimes w_1)]=c(\lambda,\lambda)\hat{v}_3(w_3)\hat{v}_2(w_2)\hat{v}_1(w_1)id_{S_\lambda}.$$ 
This is easily verified, as before.   
Now, as remarked in (\ref{R: compos de segundas comp en B-Mod}), with the description of $(1\otimes\delta)\delta(v)$ given  in (\ref{R: descriptions of delta and}), we have 

\vbox{$$
\psi^\lambda_{v_3,v_2,v_1}(v)
=\pi (\psi^\lambda_{v_3}\otimes\psi_{v_2}\otimes\psi^\lambda_{v_1})((1\otimes\delta)\delta(v))=$$
$$
\sum_{w_3\in \hueca{B}^b,w_2\in \hueca{B}_{\frak{Z}},w_1\in \hueca{B}^a}\pi (\psi^\lambda_{v_3}\otimes\psi_{v_2}\otimes\psi^\lambda_{v_1})(c^v_{w_3,w_2,w_1}(x,y)(w_3\otimes w_2\otimes w_1))$$
$$+\,\,\pi (\psi^\lambda_{v_3}\otimes\psi_{v_2}\otimes\psi^\lambda_{v_1})(r'(v)).$$}
Here, $r'(v)\in H'=\frak{f}_0W_1\frak{f}_0\otimes_R W_1\otimes_RW_1\frak{f}_0+
\frak{f}_0W_1\otimes_RW_1\otimes_R\frak{f}_0W_1\frak{f}_0$, so its expression as a combination of  tensor products involving  basis  elements in $\hueca{B}$ does not contain elements of the form $x^iv_3\otimes v_2\otimes v_1x^j$, so the last term in the preceding equality is zero.  Now, (2) follows from the preceding claim and the definition of the maps $\psi^\lambda_{v_3}$, $\psi_{v_2}$, and $\psi^\lambda_{v_1}$.  
\end{proof}

\begin{proposition}\label{P: facts de un endomorf radical de Slambda}
Take $\lambda\in D(h)$ and $f=(0,\psi)\in \rad\End_{\cal B}(S_\lambda)$. Then, $f$ is a $k$-linear combination of morphisms that factor through some morphism $f_{u_t}$ or some object $S_{\frak{f}_{z_t}}$ with $t\in [1,n]$ iff 
$$\psi=\sum_{(v_2,t,v_1)\in \hueca{I}_3}
d_{v_2,t,v_1}\psi^\lambda_{v_2,u_t,v_1}
+
\sum_{(v_2,v_1)\in \hueca{I}_2}d_{v_2,v_1}\psi^\lambda_{v_2,v_1}, $$
where $$\hueca{I}_3=\{(v_2,t,v_1)\mid v_1\in \hueca{B}_{\frak{f}_{l_t},\frak{f}_0}, v_2\in \hueca{B}_{\frak{f}_0,\frak{f}_{r_t}},  t\in [1,n] \}$$ and 
$$\hueca{I}_2=\{(v_2,v_1)\mid 
v_1\in\hueca{B}_{\frak{f}_{z_t},\frak{f}_0}, v_2\in \hueca{B}_{\frak{f}_0,\frak{f}_{z_t}}, t\in [1,n]\},$$
for some scalars $d_{v_2,t,v_1}$ and $d_{v_2,v_1}$ in $k$.  
\end{proposition}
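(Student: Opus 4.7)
The plan is to read off both directions from the already-available explicit basis descriptions of the radical hom-spaces between simple $\mathcal{B}$-modules (Lemma \ref{L: bases para rad entre simples}) combined with the composition formulas (Lemma \ref{L: de las lambda-compos}), so the proof is essentially linear bookkeeping.

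First I would dispatch the ``if'' direction, which is immediate. Suppose $\psi$ has the stated form. For each triple $(v_2,t,v_1)\in\hueca{I}_3$, part~(2) of Lemma \ref{L: de las lambda-compos} identifies $(0,\psi^\lambda_{v_2,u_t,v_1})$ with the triple composition $f^\lambda_{v_2}\,f_{u_t}\,f^\lambda_{v_1}$ in $\End_{\cal B}(S_\lambda)$, which manifestly factors through the morphism $f_{u_t}:S_{\frak{f}_{l_t}}\to S_{\frak{f}_{r_t}}$. Similarly, for each $(v_2,v_1)\in\hueca{I}_2$, part~(1) of the same lemma identifies $(0,\psi^\lambda_{v_2,v_1})$ with the double composition $f^\lambda_{v_2}\,f^\lambda_{v_1}$, which factors through the object $S_{\frak{f}_{z_t}}$. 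Assembling these with the scalars $d_{v_2,t,v_1}$ and $d_{v_2,v_1}$ exhibits $f=(0,\psi)$ as the required $k$-linear combination.

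For the ``only if'' direction, by linearity it suffices to handle a single summand of each type. A factoring $f=g\,f_{u_t}\,h$ in ${\cal B}\g\Mod$ has $h\in \Hom_{\cal B}(S_\lambda,S_{\frak{f}_{l_t}})$ and $g\in\Hom_{\cal B}(S_{\frak{f}_{r_t}},S_\lambda)$; since $S_\lambda$ and $S_{\frak{f}_{l_t}},S_{\frak{f}_{r_t}}$ are pairwise non-isomorphic simple $R$-modules, we have $\Hom_R(S_\lambda,S_{\frak{f}_{l_t}})=0=\Hom_R(S_{\frak{f}_{r_t}},S_\lambda)$, so the first components of $h$ and $g$ vanish and both morphisms lie in the respective radical spaces. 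Parts (3) and (4) of Lemma \ref{L: bases para rad entre simples} then give $h=\sum_{v_1\in\hueca{B}_{\frak{f}_{l_t},\frak{f}_0}}a_{v_1}f^\lambda_{v_1}$ and $g=\sum_{v_2\in\hueca{B}_{\frak{f}_0,\frak{f}_{r_t}}}b_{v_2}f^\lambda_{v_2}$. Substituting and distributing,
\[
g\,f_{u_t}\,h = \sum_{v_1,v_2} a_{v_1}b_{v_2}\,f^\lambda_{v_2}\,f_{u_t}\,f^\lambda_{v_1} = \Bigl(0,\sum_{v_1,v_2} a_{v_1}b_{v_2}\,\psi^\lambda_{v_2,u_t,v_1}\Bigr),
\]
by Lemma \ref{L: de las lambda-compos}(2). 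Setting $d_{v_2,t,v_1}:=a_{v_1}b_{v_2}$ contributes the required $\hueca{I}_3$-indexed part. A factoring through $S_{\frak{f}_{z_t}}$ is handled identically, but using parts (3) and (4) of Lemma \ref{L: bases para rad entre simples} with codomain/domain $S_{\frak{f}_{z_t}}$ and using Lemma \ref{L: de las lambda-compos}(1) in place of (2); this produces the $\hueca{I}_2$-indexed part.

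The only thing requiring attention is making sure the index sets produced by the expansion match $\hueca{I}_3$ and $\hueca{I}_2$ exactly: the intermediate simples $S_{\frak{f}_{l_t}},S_{\frak{f}_{r_t}},S_{\frak{f}_{z_t}}$ dictate which $\hueca{B}_{\frak{f}',\frak{f}_0}$ and $\hueca{B}_{\frak{f}_0,\frak{f}}$ are used, and these are precisely the index sets of $\hueca{I}_3$ and $\hueca{I}_2$. There is no genuine analytic obstacle here; the work is entirely combinatorial, reorganizing a sum of compositions via the formulas of Lemma \ref{L: de las lambda-compos}.
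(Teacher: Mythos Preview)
Your proposal is correct and takes essentially the same approach as the paper: both arguments reduce the factorization condition to an expansion in terms of the basic compositions $f^\lambda_{v_2}f_{u_t}f^\lambda_{v_1}$ and $f^\lambda_{v_2}f^\lambda_{v_1}$ using the $k$-bases of Lemma~\ref{L: bases para rad entre simples}, and then identify the second components via Lemma~\ref{L: de las lambda-compos}. Your version is slightly more explicit in justifying why the intermediate morphisms $h,g$ lie in the radical (via $\Hom_R=0$ between simples at distinct idempotents), whereas the paper simply invokes the basis description directly, but the substance is identical.
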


\begin{proof} From (\ref{L: bases para rad entre simples}), we have $k$-basis
$$\begin{matrix}
\{f^\lambda_{v_1}\}_{v_1\in \hueca{B}_{\frak{f}_{l_t},\frak{f}_0}} &\hbox{ for }& \rad_{\cal B}(S_\lambda,S_{\frak{f}_{l_t}}),\hfill\\
\{f^\lambda_{v_2}\}_{v_2\in \hueca{B}_{\frak{f}_0,\frak{f}_{r_t}}} &\hbox{ for }& \rad_{\cal B}(S_{\frak{f}_{r_t}},S_\lambda),\hfill\\
\{f^\lambda_{v_1}\}_{v_1\in \hueca{B}_{\frak{f}_{z_t},\frak{f}_0}} &\hbox{ for }& \rad_{\cal B}(S_\lambda,S_{\frak{f}_{z_t}}),\hfill\\
\{f^\lambda_{v_2}\}_{v_2\in \hueca{B}_{\frak{f}_0,\frak{f}_{z_t}}} &\hbox{ for }& \rad_{\cal B}(S_{\frak{f}_{z_t}},S_\lambda). \hfill\\
\end{matrix}$$
Thus, $f$ is a linear combination as described in the hypothesis iff there are scalars $d_{v_2,t,v_1}$ and $d_{v_2,v_1}$ in $k$ such that 
$$f=\sum_{(v_2,t,v_1)\in \hueca{I}_3}
d_{v_2,t,v_1}f^\lambda_{v_2}f_{u_t}f^\lambda_{v_1}
+
\sum_{(v_2,v_1)\in \hueca{I}_2}
d_{v_2,v_1}f^\lambda_{v_2}f^\lambda_{v_1}. $$
This is equivalent, by (\ref{L: de las lambda-compos}), to the equality
$$(0,\psi)=\sum_{(v_2,t,v_1)\in \hueca{I}_3}
d_{v_2,t,v_1}(0,\psi^\lambda_{v_2,u_t,v_1})
+
\sum_{(v_2,v_1)\in \hueca{I}_2}d_{v_2,v_1}(0,\psi^\lambda_{v_2,v_1}).$$
\end{proof}

\begin{proposition}\label{P: facts de endomorfs radicales de Slambda}
Take $\lambda\in D(h)$. Then, every morphism   $f=(0,\psi)\in \rad\End_{\cal B}(S_\lambda)$ is a linear combination of morphisms that factor through some morphism $f_{u_t}$ or some  object $S_{\frak{f}_{z_t}}$ with $t\in [1,n]$ iff for each $w\in \hueca{B}_{\frak{f}_0,\frak{f}_0}$ we have 
$$\psi^{\lambda}_w=\sum_{(v_2,t,v_1)\in \hueca{I}_3}
d^w_{v_2,t,v_1}\psi^\lambda_{v_2,u_t,v_1}
+
\sum_{(v_2,v_1)\in \hueca{I}_2}d^w_{v_2,v_1}\psi^\lambda_{v_2,v_1}, $$
where $\hueca{I}_3$ and $\hueca{I}_2$ are the index sets defined in the previous proposition, 
for some scalars $d^w_{v_2,t,v_1}$ and $d^w_{v_2,v_1}$ in $k$. Moreover, the last condition is equivalent to the existence of scalars  $d^w_{v_2,t,v_1}$ and $d^w_{v_2,v_1}$ such that, for all $v,w\in \hueca{B}_{\frak{f}_0,\frak{f}_0}$, the following equation is  satisfied 
\begin{equation}\tag{$E(\lambda)_{v,w}$}
\sum_{(v_2,t,v_1)\in \hueca{I}_3}
d^w_{v_2,t,v_1}c^v_{v_2,u_t,v_1}(\lambda,\lambda)
+
\sum_{(v_2,v_1)\in \hueca{I}_2}
d^w_{v_2,v_1}c^v_{v_2,v_1}(\lambda,\lambda)=\delta_{v,w}
\end{equation}
where $\delta_{v,w}$ denotes the Kronecker  delta map.
\end{proposition}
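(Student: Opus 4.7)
The plan is to use the basis description of $\rad\End_{\cal B}(S_\lambda)$ coming from Lemma \ref{L: bases para rad entre simples}(2), which gives $\{f^\lambda_w \mid w \in \hueca{B}_{\frak{f}_0,\frak{f}_0}\}$ as a $k$-basis. Since the property of being a linear combination of morphisms factoring through the specific objects $S_{\frak{f}_{z_t}}$ or through the morphisms $f_{u_t}$ is preserved by $k$-linear combinations, the statement ``every $f\in\rad\End_{\cal B}(S_\lambda)$ is such a linear combination'' holds if and only if each basis element $f^\lambda_w$ enjoys this factorization property. This essentially reduces the first equivalence to one application of the previous Proposition \ref{P: facts de un endomorf radical de Slambda} for each fixed $w\in\hueca{B}_{\frak{f}_0,\frak{f}_0}$.

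For the first equivalence, I would argue as follows. Write a general $f=(0,\psi)\in\rad\End_{\cal B}(S_\lambda)$ as $f=\sum_w a_w f^\lambda_w$ with $a_w\in k$, using the basis above. Then $f$ admits a factorization decomposition through the $f_{u_t}$ and $S_{\frak{f}_{z_t}}$ if and only if each $f^\lambda_w$ does (take scalar multiples and add; conversely, extract the coefficients). Applying Proposition \ref{P: facts de un endomorf radical de Slambda} to $f^\lambda_w$ then yields exactly the required expansion of $\psi^\lambda_w$ as $\sum d^w_{v_2,t,v_1}\psi^\lambda_{v_2,u_t,v_1}+\sum d^w_{v_2,v_1}\psi^\lambda_{v_2,v_1}$.

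For the ``moreover'' part, the key point is that any element of $\Hom_{R\g R}(W_1,\Hom_k(S_\lambda,S_\lambda))$ is an $R$-$R$-bimodule map, so on a basis element $v\in\hueca{B}_{\frak{f}',\frak{f}}$ we have $\psi(v)=\frak{f}'\psi(v)\frak{f}$, which vanishes as a linear endomorphism of $S_\lambda$ whenever $\frak{f}\neq\frak{f}_0$ or $\frak{f}'\neq\frak{f}_0$. Hence equality of two such bimodule maps into $\Hom_k(S_\lambda,S_\lambda)$ is tested solely on the elements $v\in\hueca{B}_{\frak{f}_0,\frak{f}_0}$. By the very definition of $\psi^\lambda_w$ in Lemma \ref{L: bases para rad entre simples}(2), we have $\psi^\lambda_w(v)=\hat{w}(v)\,id_{S_\lambda}=\delta_{v,w}\,id_{S_\lambda}$ on this set. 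By Lemma \ref{L: de las lambda-compos}(1) and (2), the right-hand side of the expansion of $\psi^\lambda_w$ evaluated at $v\in\hueca{B}_{\frak{f}_0,\frak{f}_0}$ reads $\bigl(\sum d^w_{v_2,t,v_1}c^v_{v_2,u_t,v_1}(\lambda,\lambda)+\sum d^w_{v_2,v_1}c^v_{v_2,v_1}(\lambda,\lambda)\bigr)\,id_{S_\lambda}$. Setting these equal for all $v,w$ yields precisely the system $E(\lambda)_{v,w}$.

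I do not expect a major obstacle here: the argument is essentially a bookkeeping exercise combining (i) the basis of $\rad\End_{\cal B}(S_\lambda)$, (ii) the preceding proposition applied basis-wise, and (iii) the explicit formulas of Lemma \ref{L: de las lambda-compos}. The only place where care is needed is to notice that bimodule maps into $\Hom_k(S_\lambda,S_\lambda)$ are determined by their values on $\hueca{B}_{\frak{f}_0,\frak{f}_0}$ (rather than on all of $\hueca{B}$); once this observation is made, the translation of the equation for $\psi^\lambda_w$ into the scalar identities $E(\lambda)_{v,w}$ is immediate.
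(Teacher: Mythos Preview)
Your proposal is correct and follows essentially the same path as the paper's own proof: use the $k$-basis $\{f^\lambda_w\}$ of $\rad\End_{\cal B}(S_\lambda)$ from Lemma~\ref{L: bases para rad entre simples}(2) to reduce the first equivalence to one application of Proposition~\ref{P: facts de un endomorf radical de Slambda} per basis element, and then evaluate both sides of the resulting identity for $\psi^\lambda_w$ at each $v\in\hueca{B}_{\frak{f}_0,\frak{f}_0}$ via Lemma~\ref{L: de las lambda-compos} to obtain the scalar equations $E(\lambda)_{v,w}$. Your added remark that an $R$-$R$-bimodule map into $\Hom_k(S_\lambda,S_\lambda)$ necessarily vanishes on basis elements outside $\hueca{B}_{\frak{f}_0,\frak{f}_0}$ makes explicit something the paper leaves implicit, but otherwise the arguments match.
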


\begin{proof} From (\ref{L: bases para rad entre simples}), we know that 
$\{f_w^\lambda\mid w\in \hueca{B}_{\frak{f}_0,\frak{f}_0}\}$ is a $k$-basis for $\rad_{\cal B}(S_\lambda,S_\lambda)$. Thus, every morphism $f\in \rad_{\cal B}(S_\lambda,S_\lambda)$ is a linear combination as in the first statement of our proposition  iff every morphism $f_w^\lambda$ with $w\in \hueca{B}_{\frak{f}_0,\frak{f}_0}$ is a linear combination of the same type. From (\ref{P: facts de un endomorf radical de Slambda}),  this occurs  if and only if there are scalars  $d^w_{v_2,t,v_1}$ and $d^w_{v_2,v_1}$ such that 
$$\psi^{\lambda}_w=\sum_{(v_2,t,v_1)\in \hueca{I}_3}
d^w_{v_2,t,v_1}\psi^\lambda_{v_2,u_t,v_1}
+
\sum_{(v_2,v_1)\in \hueca{I}_2}d^w_{v_2,v_1}\psi^\lambda_{v_2,v_1}.$$
From (\ref{L: de las lambda-compos}), we get that this last equation holds iff $E(\lambda)_{v,w}$ holds 
for every $v\in \hueca{B}_{\frak{f}_0,\frak{f}_0}$. Indeed, we have 
$$\psi^\lambda_w(v)=\hat{w}(v)id_{S_\lambda}=\delta_{v,w}id_{S_\lambda}, \hbox{ and }$$
\vbox{$$\sum_{(v_2,t,v_1)\in \hueca{I}_3}
d^w_{v_2,t,v_1}\psi^\lambda_{v_2,u_t,v_1}(v)
+
\sum_{(v_2,v_1)\in \hueca{I}_2}d^w_{v_2,v_1}\psi^\lambda_{v_2,v_1}(v)\hbox{\hskip4cm}$$
$$\hbox{\hskip1cm}=\sum_{(v_2,t,v_1)\in \hueca{I}_3}
d^w_{v_2,t,v_1}c^v_{v_2,u_t,v_1}(\lambda,\lambda)id_{S_\lambda}
+
\sum_{(v_2,v_1)\in \hueca{I}_2}
d^w_{v_2,v_1}c^v_{v_2,v_1}(\lambda,\lambda)id_{S_\lambda}.$$}
\end{proof}

\begin{corollary}\label{C: caract de que End(Mlambda)=k casi siempre} Given $\lambda\in D(h)$, we have that $\End_\Lambda(M(\lambda))=k$ iff ${\cal B}$ satisfies equations of type  $E(\lambda)_{v,w}$, for all $v,w\in \hueca{B}_{\frak{f}_0,\frak{f}_0}$. 
\end{corollary}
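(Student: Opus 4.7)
The plan is to chain Propositions \ref{P: caract de u con cokeru=0} and \ref{P: facts de endomorfs radicales de Slambda} together, using the full and faithful functor $F$ and the fullness of the cokernel functor as the bridge.

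First I would observe that $M(\lambda) = \Coker F(S_\lambda)$ is a finite-dimensional indecomposable $\Lambda$-module (since $F$ preserves indecomposables and $\Coker$ of a ${\cal P}^1(\Lambda)$-object associated to a simple ${\cal B}$-module is indecomposable), so $\End_\Lambda(M(\lambda))$ is a finite-dimensional local $k$-algebra with residue field $k$. Thus $\End_\Lambda(M(\lambda)) = k$ iff $\rad\End_\Lambda(M(\lambda))=0$. Since $F$ is full and faithful and $\Coker:{\cal P}^1(\Lambda)\rightmap{}\Lambda\g\Mod$ is full (projective presentations lift), the composition $\Coker F$ gives a surjective $k$-algebra map $\End_{\cal B}(S_\lambda)\twoheadrightarrow\End_\Lambda(M(\lambda))$ sending $id_{S_\lambda}$ to $id_{M(\lambda)}$. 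As any quotient of a local ring is local, $\rad\End_{\cal B}(S_\lambda)$ surjects onto $\rad\End_\Lambda(M(\lambda))$, and hence $\End_\Lambda(M(\lambda))=k$ iff $\Coker F(f)=0$ for every $f\in\rad\End_{\cal B}(S_\lambda)$.

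Next I would transfer this condition across $F$ via Proposition \ref{P: caract de u con cokeru=0}. Because $F(S_\lambda)$ has finite-dimensional underlying projectives, that proposition applied to $u=F(f)$ characterizes $\Coker F(f)=0$ as $F(f)$ being a finite sum of morphisms factoring either through some $S(\Lambda e_t)$ or through some $\gamma_t:L(\Lambda e_t)\rightmap{}R(\Lambda e_t)$. Using the identifications $F(S_{\frak{f}_{l_t}})\cong L(\Lambda e_t)$, $F(S_{\frak{f}_{r_t}})\cong R(\Lambda e_t)$, $F(S_{\frak{f}_{z_t}})\cong S(\Lambda e_t)$ (from Proposition \ref{P: el funtor F}) and $F(f_{u_t})=\gamma_t$ (from Remark \ref{R: choice of special basis}), the fullness of $F$ lifts each factor morphism back to ${\cal B}\g\Mod$, and faithfulness reconstitutes the resulting equation there. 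Hence such an expression for $F(f)$ exists iff $f$ itself is a finite sum of morphisms that factor either through some $S_{\frak{f}_{z_t}}$ or through some $f_{u_t}$.

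Assembling the chain: $\End_\Lambda(M(\lambda))=k$ iff every $f\in\rad\End_{\cal B}(S_\lambda)$ admits such a factoring expression in ${\cal B}\g\Mod$, which by Proposition \ref{P: facts de endomorfs radicales de Slambda} is precisely the condition that the equations $E(\lambda)_{v,w}$ hold for all $v,w\in\hueca{B}_{\frak{f}_0,\frak{f}_0}$. The only technical care required is the summand-by-summand lifting across $F$, which is a routine consequence of full faithfulness; no fresh computation with $\delta$ is needed, since those computations have already been packaged into Proposition \ref{P: facts de endomorfs radicales de Slambda}.
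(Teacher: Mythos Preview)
Your proposal is correct and follows essentially the same route as the paper: reduce $\End_\Lambda(M(\lambda))=k$ to vanishing of the radical, use fullness of $\Coker$ (and full faithfulness of $F$) to pass between $\rad\End_{\cal B}(S_\lambda)$ and $\rad\End_\Lambda(M(\lambda))$, invoke Proposition~\ref{P: caract de u con cokeru=0} to characterize the kernel of $\Coker$, and finish with Proposition~\ref{P: facts de endomorfs radicales de Slambda}. The only cosmetic difference is that the paper introduces $X_\lambda:=F(S_\lambda)$ and phrases the intermediate step in $\rad_{{\cal P}^1(\Lambda)}(X_\lambda,X_\lambda)$ rather than composing $\Coker F$ at once, but the logical content is identical.
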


\begin{proof} Since $k$ is algebraically closed, $\End_\Lambda(M(\lambda))=k\oplus \rad\End_\Lambda(M(\lambda))$. If we define  $X_\lambda:=F(S_\lambda)$, we know that $\Coker(X_\lambda)=\Coker (F(S_\lambda))\cong M\otimes_\Gamma S_\lambda=M(\lambda)$. Since $\Coker:{\cal P}^1(\Lambda)\rightmap{}\Lambda\g\Mod$ is a full functor and $M(\lambda)$ is indecomposable, it induces a surjection 
$$\Coker: \rad_{{\cal P}^1(\Lambda)}(X_\lambda,X_\lambda)\rightmap{}\rad_\Lambda(M(\lambda),M(\lambda)).$$

Then, from (\ref{P: facts de endomorfs radicales de Slambda}), ${\cal B}$ satisfies equations of type  $E(\lambda)_{v,w}$, for all $v,w\in \hueca{B}_{f_0,f_0}$, iff every morphism in $\rad_{\cal B}(S_\lambda,S_\lambda)$ is a $k$-linear combination of 
morphisms that factor through some morphism $f_{u_t}$ or some object $S_{\frak{f}_{z_t}}$  with $t\in [1,n]$. Since $F$ is full and faithful, $F(f_{u_t})=\gamma_t$, and $F(S_{\frak{f}_{z_t}})=\hueca{S}_t$, this is equivalent to the fact that every morphism 
$u\in \rad_{{\cal P}^1(\Lambda)}(X_\lambda,X_\lambda)$ is a linear combination of 
morphisms that  factor through some morphism $\gamma_t$ or some object $\hueca{S}_t$  with  $t\in [1,n]$.

Then, from (\ref{P: caract de u con cokeru=0}) and (\ref{R: vale Prop para las gammas(t)}), if ${\cal B}$ satisfies equations of type  $E(\lambda)_{v,w}$, for all $v,w\in \hueca{B}_{\frak{f}_0,\frak{f}_0}$, we get that every morphism  $g\in\rad_\Lambda(M(\lambda),M(\lambda))$ is of the form $g=\Coker(u)=0$. Conversely, if 
$\rad_\Lambda(M(\lambda),M(\lambda))=0$, we get that every $u\in \rad_{{\cal P}^1(\Lambda)}(X_\lambda,X_\lambda)$ satisfies that $\Coker(u)=0$. Thus, from (\ref{P: caract de u con cokeru=0}) and (\ref{R: vale Prop para las gammas(t)}), $u$ is a linear combination of morphisms  that factor through some morphism $\gamma_t$  or some object $\hueca{S}_t$, for some $t\in [1,n]$. So ${\cal B}$ satisfies the specified equations. 
\end{proof}

\begin{proposition}\label{P: la eq E(x)}
For infinitely many $\lambda\in D(h)$ we have 
$\End_\Lambda(M(\lambda))=k$ iff there are elements  $d^w_{v_2,t,v_1}(x)$ and $d^w_{v_2,v_1}(x)$ in $k(x)$ such that for every $v,w\in \hueca{B}_{\frak{f}_0,\frak{f}_0}$, the following equation holds
 
\begin{equation}\tag{$E(x)_{v,w}$}
\sum_{(v_2,t,v_1)\in \hueca{I}_3}
d^w_{v_2,t,v_1}(x)c^v_{v_2,u_t,v_1}(x,x)
+
\sum_{(v_2,v_1)\in \hueca{I}_2}
d^w_{v_2,v_1}(x)c^v_{v_2,v_1}(x,x)=\delta_{v,w}
\end{equation}
\end{proposition}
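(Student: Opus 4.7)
\begin{proofof}{P: la eq E(x)}
By the preceding Corollary (\ref{C: caract de que End(Mlambda)=k casi siempre}), the left-hand side of the equivalence is the statement that, for infinitely many $\lambda\in D(h)$, the scalar system $\{E(\lambda)_{v,w}\}_{v,w\in \hueca{B}_{\frak{f}_0,\frak{f}_0}}$ has a solution over $k$ in the unknowns $d^w_{v_2,t,v_1}$ and $d^w_{v_2,v_1}$. Thus the proposition reduces to a purely linear-algebraic statement comparing solvability of a linear system over $k(x)$ with solvability of its specializations over $k$.

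The plan is to fix $w\in \hueca{B}_{\frak{f}_0,\frak{f}_0}$ and view $\{E(\lambda)_{v,w}\}_v$ as a linear system with coefficient matrix $A(x)$ having entries $c^v_{v_2,u_t,v_1}(x,x)$ and $c^v_{v_2,v_1}(x,x)$ in $k[x]$ (these are genuine polynomials by (\ref{R: descriptions of delta and})), and constant right-hand side $b_w$ (a column with entries $\delta_{v,w}\in\{0,1\}$). The system $\{E(x)_{v,w}\}_v$ is the same system, now considered over $k(x)$. By standard linear algebra, $\{E(x)_{v,w}\}_v$ has a solution in $k(x)$ iff $\rank_{k(x)}A(x)=\rank_{k(x)}[A(x)\mid b_w]$, and the scalar system $\{E(\lambda)_{v,w}\}_v$ has a solution in $k$ iff $\rank_kA(\lambda)=\rank_k[A(\lambda)\mid b_w]$. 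Moreover, since the entries of $A(x)$ and $[A(x)\mid b_w]$ are polynomial in $x$, both ranks drop on the proper closed subset of $k$ cut out by the vanishing of the relevant minors, so there is a cofinite set $U_w\subseteq k$ on which $\rank_kA(\lambda)=\rank_{k(x)}A(x)$ and $\rank_k[A(\lambda)\mid b_w]=\rank_{k(x)}[A(x)\mid b_w]$ simultaneously.

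For the implication ($\Leftarrow$), assume we have $d^w_{v_2,t,v_1}(x),d^w_{v_2,v_1}(x)\in k(x)$ solving $E(x)_{v,w}$ for all $v,w$. Let $h_0(x)\in k[x]$ be a common denominator of these finitely many rational functions; then for every $\lambda$ in the cofinite set $\{\lambda\in D(h)\mid h_0(\lambda)\not=0\}$, specialization $x\mapsto \lambda$ yields scalar solutions of $E(\lambda)_{v,w}$ for all $v,w$, so by the Corollary $\End_\Lambda(M(\lambda))=k$ for infinitely many $\lambda$.

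For the implication ($\Rightarrow$), assume $\End_\Lambda(M(\lambda))=k$ for infinitely many $\lambda\in D(h)$. Fix $w$; then, by the Corollary, the system $\{E(\lambda)_{v,w}\}_v$ has a solution in $k$ for infinitely many $\lambda$, hence for some $\lambda\in U_w$. For such $\lambda$ we have $\rank_{k(x)}A(x)=\rank_kA(\lambda)=\rank_k[A(\lambda)\mid b_w]=\rank_{k(x)}[A(x)\mid b_w]$, so $\{E(x)_{v,w}\}_v$ is solvable over $k(x)$. Doing this for each $w\in \hueca{B}_{\frak{f}_0,\frak{f}_0}$ (a finite set) produces the required rational functions $d^w_{v_2,t,v_1}(x)$ and $d^w_{v_2,v_1}(x)$. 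The only point requiring care is the bookkeeping of the two types of unknowns and verifying that the coefficients really are polynomials in $x$ after setting $y=x$; this is where (\ref{R: descriptions of delta and}), together with the choice of basis in (\ref{R: choice of special basis}) and the rescaling $v\mapsto h(x)^mvh(x)^m$ there, does the work.
\end{proofof}
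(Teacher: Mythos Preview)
Your proof is correct and follows essentially the same approach as the paper: both reduce via Corollary~\ref{C: caract de que End(Mlambda)=k casi siempre} to a linear-algebra statement about ranks of a polynomial matrix over $k(x)$ versus its specializations. The only cosmetic difference is that the paper packages all $w$ at once into a single matrix equation $D(x)C(x)=I_{c_0}$ (so the condition becomes simply $\rank C(x)=c_0$), whereas you handle each $w$ separately via the augmented-matrix rank criterion; both routes rest on the same genericity-of-rank observation recorded in Remark~\ref{L: matrices racionales invertibles}.
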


\begin{proof} Consider the cardinalities  $c_0:=\vert \hueca{B}_{\frak{f}_0,\frak{f}_0}\vert $ and $c_1:=\vert \hueca{I}_3\cup \hueca{I}_2\vert$. Then, we have the matrix 
$$C(x):=\begin{pmatrix}
\begin{matrix} c^v_{v_2,u_t,v_1}(x,x)
\end{matrix}\\
-----\\
\begin{matrix}
c_{v_2,v_1}^v(x,x)\\ 
\end{matrix}
\end{pmatrix}\in M_{c_1\times c_0}(k(x)). $$
 Then, the existence of the elements $d^w_{v_2,t,v_1}(x)$ and $d^w_{v_2,v_1}(x)$ in $k(x)$ as in the equations  $E(x)_{v,w}$ is equivalent to the existence of a matrix $D(x)$ of the form 
$$D(x)=\begin{pmatrix}
\begin{matrix} d^w_{v_2,t,v_1}(x)
\end{matrix}\,\vert\,
\begin{matrix}
d_{v_2,v_1}^w(x)\\ 
\end{matrix}
\end{pmatrix}\in M_{c_0\times c_1}(k(x)) $$
 such that $D(x)C(x)=I_{c_0}$. This is equivalent to the fact that $C(x)$ has rank $c_0$. 

Now, from the last corollary, we know that  
$\End_\Lambda(M(\lambda))=k$ iff  the equation $E(\lambda)_{v,w}$ holds for every $v,w\in \hueca{B}_{\frak{f}_0,\frak{f}_0}$. That is iff  there is a  matrix $D_\lambda\in M_{c_0\times c_1}(k)$ with $D_\lambda C(\lambda)=I_{c_0}$, or, equivalently, that the matrix $C(\lambda)$ has rank $c_0$. 

Our statement follows from the fact that $C(x)$ has rank $c_0$ iff $C(\lambda)$ has rank $c_0$ for infinitely many $\lambda\in k$. This   is a consequence of the next remark (\ref{L: matrices racionales invertibles}).   
\end{proof}

With the notation of the preceding proof, we have the following. 

\begin{corollary}\label{C: caract de M(lambda) brics iff C(x) with rank c0} $\End_\Lambda(M(\lambda))=k$, for infinitely $\lambda\in D(h)$, if and only if the matrix $C(x)$ has rank $c_0$.  
\end{corollary}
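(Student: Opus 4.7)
The plan is to observe that this corollary is an immediate repackaging of what has already been established in the proof of Proposition \ref{P: la eq E(x)}. In that proof, after organizing the polynomials $c^v_{v_2,u_t,v_1}(x,x)$ and $c^v_{v_2,v_1}(x,x)$ into the coefficient matrix $C(x)\in M_{c_1\times c_0}(k(x))$, it is shown that the existence of rational functions $d^w_{v_2,t,v_1}(x)$ and $d^w_{v_2,v_1}(x)$ in $k(x)$ satisfying equation $E(x)_{v,w}$ for all pairs $v,w\in \hueca{B}_{\frak{f}_0,\frak{f}_0}$ is equivalent to the existence of a matrix $D(x)\in M_{c_0\times c_1}(k(x))$ with $D(x)C(x)=I_{c_0}$, which is in turn equivalent to $C(x)$ having rank $c_0$ over $k(x)$.

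Thus, combining this linear-algebraic equivalence with the main assertion of Proposition \ref{P: la eq E(x)}, which says that $\End_\Lambda(M(\lambda))=k$ for infinitely many $\lambda\in D(h)$ if and only if the equations $E(x)_{v,w}$ are solvable in $k(x)$, yields the stated corollary. No further computation is required; one simply reads off the chain of equivalences already displayed.

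Since the content is essentially a bookkeeping remark, there is no real obstacle. The only subtlety worth flagging explicitly is that one direction uses the rational-matrix fact that the rank of $C(x)$ over $k(x)$ agrees with the rank of the specialization $C(\lambda)$ for all but finitely many $\lambda\in k$, which is invoked in the proof of the preceding proposition and need not be reproved here.
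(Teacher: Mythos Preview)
Your proposal is correct and matches the paper's approach exactly: the corollary has no separate proof in the paper and is stated as an immediate consequence of the chain of equivalences displayed in the proof of Proposition~\ref{P: la eq E(x)}, precisely as you describe.
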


\begin{remark}\label{L: matrices racionales invertibles} Consider any matrix $A(x)\in M_{n\times n}(k(x))$. Then, $A(x)$ is invertible in $M_{n\times n}(k(x))$
iff $A(\lambda)$ is invertible in $M_{n\times n}(k)$ for infinitely many $\lambda\in k$. 
This  is easy to prove.
\end{remark}

\section{Radical morphisms of ${\cal B}\g k(x)$-bimodules}

We start with some  remarks on proper bimodules over layered ditalgebras.

\begin{remark}\label{R: reminder de bimods, ext y forget-embed}
Given a ditalgebra ${\cal A}$ with layer $(R,W=W_0\oplus W_1)$, 
 we recall that an \emph{${\cal A}\g k(x)$-bimodule}  $X$ is an object in ${\cal A}\g \Mod$ together with a morphism of algebras $\alpha_X:k(x)\rightmap{}\End_{\cal A}(X)^{op}$, and it is a \emph{proper bimodule} if $\alpha_X$ is of the form $\alpha_X=(\alpha^0_X,0)$. Given ${\cal A}\g k(x)$-bimodules $X$ and $Y$,  a \emph{morphism of ${\cal A}\g k(x)$-bimodules} $f:X\rightmap{}Y$ is a morphism in ${\cal A}\g\Mod$ such that $f\alpha_X(d(x))=\alpha_Y(d(x))f$, for all $d(x)\in k(x)$. The category of ${\cal A}\g k(x)$-bimodules is denoted by ${\cal A}\g k(x)\g\Mod$. In the following, we consider mainly proper bimodules and  the corresponding full subcategory ${\cal A}\g k(x)\g\Mod_p$ of ${\cal A}\g k(x)\g\Mod$.  Recall that any functor $F:{\cal A}'\g\Mod\rightmap{}{\cal A}\g\Mod$ induces a functor $F^{k(x)}:{\cal A}'\g k(x)\g \Mod\rightmap{}{\cal A}\g k(x)\g\Mod$. The latter restricts to proper bimodules whenever $F$ is a reduction functor. 
 
  We will assume some familiarity with bimodules, see \cite{CB2} or \cite{BSZ}. 
 \end{remark}
 
 \begin{remark}\label{R: morfismos de cal(A)-k(x)-bimods}
 Recall that any ${\cal A}\g k(x)$-bimodule $X$ has a natural structure of $R\g k(x)$-bimodule, with $md(x)=\alpha^0_X(d(x))[m]$, for $m\in X$ and $d(x)\in k(x)$.   
 
 Given two proper ${\cal A}\g k(x)$-bimodules $X$ and $Y$, the morphisms of ${\cal A}\g k(x)$-bimodules $f=(f^0,f^1):X\rightmap{}Y$ are given by pairs $(f^0,f^1)$ such that $f^0:X\rightmap{}Y$ is a morphism of $R\g k(x)$-bimodules and $f^1\in \Hom_{R\g R}(W_1,\Hom_k(X,Y))$ is such that, for any $w\in W_1$, the map $f^1(w):X\rightmap{}Y$ is a morphism of $k(x)$-modules. 
 
 Thus a morphism of proper ${\cal A}\g k(x)$-bimodules 
 $(f^0,f^1):X\rightmap{}Y$ is a pair such that $f^0\in \Hom_{R\g k(x)}(X,Y)$ and 
 $$f^1\in \Hom_{R\g R}(W_1,\Hom_{k(x)}(X,Y))\cong \Hom_{S\g S}(\hat{W}_1,\Hom_{k(x)}(X,Y)).$$
 We will denote by $\rad_{{\cal A}\g k(x)}$ the radical of the additive $k$-category ${\cal A}\g k(x)\g\Mod_p$.
 
 We have the \emph{scalar extension functor} $(-)^{k(x)}:{\cal A}\g\Mod\rightmap{}{\cal A}\g k(x)\g\Mod_p$ given by $X\longmapsto X\otimes_kk(x)$, where $\alpha_{X\otimes_kk(x)}(d)=(id_X\otimes did_{k(x)},0)$, for $d\in k(x)$. Moreover, given a morphism $f=(f^0,f^1):X\rightmap{}Y$ in ${\cal A}\g\Mod$, the functor $(-)^{k(x)}$ maps $f$ onto $f\otimes id:=(f^0\otimes id,f^1\otimes id)$, where $(f^1\otimes id)(v):=f^1(v)\otimes id\in \Hom_{k(x)}(X\otimes_k k(x),Y\otimes_k k(x))$, for $v\in W_1$. 
 
 We also have the \emph{forgetful-embedding functor}  
 $\sigma_{\cal A}:{\cal A}\g k(x)\g\Mod_p\rightmap{}{\cal A}\g\Mod$, which maps each proper bimodule $X$ onto the underlying $A$-module $X$, and maps each morphism $f=(f^0,f^1):X\rightmap{}Y$ onto $f=(f^0,f^1)$, which makes sense  because 
 $$f^0,f^1(v)\in \Hom_{k(x)}(X,Y)\subset \Hom_k(X,Y), \hbox{ for all } v\in W_1.$$ 
 Finally, notice that, for each reduction ${\cal A}\longmapsto {\cal A}^z$ of type $z\in \{a,r,d,e,u,X,l\}$, 
 there is an up to isomorphism  commutative diagram
$$\begin{matrix}
{\cal A}^z\g\Mod&\rightmap{F^z}&{\cal A}\g\Mod\\
\shortlmapup{\sigma_{{\cal A}^z}}&&\shortrmapup{\sigma_{\cal A}}\\
{\cal A}^z\g k(x)\g\Mod_p&\rightmap{(F^z)^{k(x)}}&{\cal A}\g k(x)\g\Mod_p\\
\shortlmapup{(-)^{k(x)}}&&\shortrmapup{(-)^{k(x)}}\\
{\cal A}^z\g\Mod&\rightmap{F^z}&{\cal A}\g\Mod,\\
\end{matrix}$$
 where the superindex $z$ denotes the type of reduction to which $F^{z}$ is associated:  $F^r$ corresponds to regularization, $F^d$ corresponds to deletion of idempotents,  $F^e$ to edge reduction, $F^u$ to unravelling of a loop, $F^a$ to absorption of a loop, $F^X$ to reduction with respect to an admissible module, and $F^l$ to a localization, see \cite[\S25]{BSZ}  
\end{remark}  
 
 \begin{remark}\label{R: introd de S(super x)} 
Again, ${\cal B}$ denotes our fixed minimal ditalgebra and we keep the notation of the preceding sections. 

 We have the simple proper ${\cal B}\g k(x)$-bimodule $k(x)\frak{f}_0=\frak{f}_0k(x)$, and, for any non-marked idempotent $\frak{f}$, we have the simple proper ${\cal B}\g k(x)$-bimodule $S^x_\frak{f}:=S_\frak{f}\otimes_kk(x)$. If we define  $\hat{\frak{f}}:=\frak{f}\otimes 1\in S_\frak{f}^x$, we have the one-dimensional $k(x)$-vector space $S_\frak{f}^x=\hat{\frak{f}}k(x)$ with basis $\hat{\frak{f}}$. 
\end{remark}

\begin{lemma}\label{L: k(x)-bases para rad entre bimodulos simples} Whenever $\frak{f},\frak{f}'$ are non-marked idempotents, we have the following:  
\begin{enumerate}
\item  For  $v\in \hueca{B}_{\frak{f}',\frak{f}}$, consider the morphism  $f^x_v:=(0,\hat{\psi}^x_v)\in \rad_{{\cal B}\g k(x)}(S_\frak{f}^x,S^x_{\frak{f}'})$, where   $\hat{\psi}^x_v\in \Hom_{R\g R}(W_1,\Hom_{k(x)}(S^x_\frak{f},S^x_{\frak{f}'}))$ extends the homomorphism $\psi^x_v\in \Hom_{S\g S}(\hat{W}_1,\Hom_{k(x)}(S^x_\frak{f},S^x_{\frak{f}'}))$, given by  
$\psi^x_v(w)[\hat{\frak{f}}]=\hat{v}(w)\hat{\frak{f}}'$, for $w\in \hat{W}_1$. Then,    
$$\{f^x_v\mid v\in \hueca{B}_{\frak{f}',\frak{f}}\}\hbox{ is a $k(x)$-basis for }
\rad_{{\cal B}\g k(x)}(S^x_\frak{f},S^x_{\frak{f}'}).$$   
 \item  For  $v\in \hueca{B}_{\frak{f},\frak{f}_0}$, consider the morphism  $f^x_v:=(0,\hat{\psi}^x_v)\in \rad_{{\cal B}\g k(x)}(k(x)\frak{f}_0,S^x_{\frak{f}})$, where   $\hat{\psi}^x_v\in \Hom_{R\g R}(W_1,\Hom_{k(x)}(k(x)\frak{f}_0,S^x_{\frak{f}}))$ extends the homomorphism  $\psi^x_v\in \Hom_{S\g S}(\hat{W}_1,\Hom_{k(x)}(k(x)\frak{f}_0,S^x_{\frak{f}}))$, given by  $\psi^x_v(w)[\frak{f}_0]=\hat{v}(w)\hat{\frak{f}}$, for $w\in \hat{W}_1$. Then,  
$$\{f^x_v\mid v\in \hueca{B}_{\frak{f},\frak{f}_0}\}\hbox{ is a $k(x)$-basis for }\rad_{{\cal B}\g k(x)}(k(x)\frak{f}_0,S^x_{\frak{f}}).$$ 
\item For  
$v\in \hueca{B}_{\frak{f}_0,\frak{f}}$, consider the morphism  $f^x_v:=(0,\hat{\psi}^x_v)\in \rad_{{\cal B}\g k(x)}(S^x_{\frak{f}},k(x)\frak{f}_0)$, where   $\hat{\psi}^x_v\in \Hom_{R\g R}(W_1,\Hom_{k(x)}(S^x_{\frak{f}},k(x)\frak{f}_0))$ extends the homomorphism  $\psi^x_v\in \Hom_{S\g S}(\hat{W}_1,\Hom_{k(x)}(S^x_{\frak{f}},k(x)\frak{f}_0))$, given by  $\psi^x_v(w)[\hat{\frak{f}}]=\hat{v}(w)\frak{f}_0$, for $w\in \hat{W}_1$. Then,  
$$\{f^x_v\mid v\in 
\hueca{B}_{\frak{f}_0,\frak{f}}\}\hbox{ is a $k(x)$-basis for }\rad_{{\cal B}\g k(x)}(S^x_{\frak{f}},k(x)\frak{f}_0).$$ 
 \item For  
 $v\in \hueca{B}_{\frak{f}_0,\frak{f}_0}$, consider the morphism  $f^x_v:=(0,\hat{\psi}^x_v)\in \rad_{{\cal B}\g k(x)}(k(x)\frak{f}_0,k(x)\frak{f}_0)$, where
 $\hat{\psi}^x_v\in \Hom_{R\g R}(W_1,\Hom_{k(x)}(k(x)\frak{f}_0,k(x)\frak{f}_0))$ extends the homomorphism  $\psi^x_v\in \Hom_{S\g S}(\hat{W}_1,\Hom_{k(x)}(k(x)\frak{f}_0,k(x)\frak{f}_0))$, given by  $\psi^x_v(w)[\frak{f}_0]=\hat{v}(w)\frak{f}_0$, for $w\in \hat{W}_1$. Then,  
$$\{f^x_v\mid v\in \hueca{B}_{\frak{f}_0,\frak{f}_0}\}\hbox{ is a $k(x)$-basis for }
\rad_{{\cal B}\g k(x)}(k(x)\frak{f}_0,k(x)\frak{f}_0).$$  
\end{enumerate}
\end{lemma}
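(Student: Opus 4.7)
The plan is to imitate, in the $k(x)$-bimodule setting, the chain of arguments that proved Lemma (\ref{L: descripcion de rad(N,N')}), Lemma (\ref{L: para construir morfismos entre simples}), and Lemma (\ref{L: bases para rad entre simples}) for simple ${\cal B}$-modules. All four items (1)--(4) have the same structural shape, so I would prove a single general statement covering them simultaneously, with $N,N'$ ranging over the simple ${\cal B}\g k(x)$-bimodules listed in item (2) of Remark (\ref{R: introd de S(super x)}).

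First I would establish the bimodule analog of (\ref{L: descripcion de rad(N,N')}): for any pair $N,N'$ of these simples, a morphism $f=(f^0,f^1)\in \Hom_{{\cal B}\g k(x)}(N,N')$ lies in $\rad_{{\cal B}\g k(x)}(N,N')$ if and only if $f^0=0$. The forward direction follows because when $f=(0,f^1)$, any composition $id_{N'}-fg$ has the form $(id_{N'},0)-(0,h^1)$ with $(0,h^1)$ nilpotent in the local algebra $\End_{{\cal B}\g k(x)}(N')$ (the latter is local by the same \cite{BSZ}(5.4) argument as in (\ref{L: descripcion de rad(N,N')}), observing that $k(x)\frak{f}_0$ and $S^x_{\frak{f}}$ are each one-dimensional over $k(x)$). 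Conversely, if $f^0\neq 0$, then $f^0$ is an $R\g k(x)$-bimodule homomorphism between one-dimensional $k(x)$-vector spaces, hence a $k(x)$-isomorphism; its inverse $g^0$ produces a genuine morphism $g=(g^0,0)$ in ${\cal B}\g k(x)\g\Mod_p$ making $id_{N'}-fg$ nilpotent, forcing it to be invertible, a contradiction.

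Next I would prove the bimodule analog of (\ref{L: para construir morfismos entre simples}): the $k(x)$-linear map
$$\theta^x:D(\frak{f}'\hat{W}_1\frak{f})\otimes_k k(x)\longrightarrow \Hom_{S\g S}(\hat{W}_1,\Hom_{k(x)}(N,N'))$$
sending $\xi\otimes 1$ to the bimodule map $w\longmapsto [\,n\mapsto \xi(\frak{f}'w\frak{f})\cdot n_0\,]$ (where $n_0$ denotes the chosen $k(x)$-basis vector of $N'$, namely $\hat{\frak{f}}'$ or $\frak{f}_0$ according to the case) is a $k(x)$-linear isomorphism. That $\theta^x(\xi)$ is $S\g S$-bilinear is proved by the very same two-line scalar manipulation as in (\ref{L: para construir morfismos entre simples}), since the relevant idempotents absorb scalars $s\in S$ by a scalar from $k$. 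Bijectivity follows from the chain
$$\Hom_{S\g S}(\hat{W}_1,\Hom_{k(x)}(N,N'))\cong \Hom_{S\frak{f}'\g S\frak{f}}(\frak{f}'\hat{W}_1\frak{f},\Hom_{k(x)}(N,N'))\cong \Hom_{k(x)}(\frak{f}'\hat{W}_1\frak{f}\otimes_k k(x),k(x)),$$
using $\Hom_{k(x)}(N,N')\cong k(x)$. Under $\theta^x$, each basis element $v^*$ (for $v\in \hueca{B}_{\frak{f}',\frak{f}}$) maps exactly to the map $\psi^x_v$ described in the statement.

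Finally, combining these two steps with the bimodule extension isomorphism
$$\Hom_{S\g S}(\hat{W}_1,\Hom_{k(x)}(N,N'))\cong \Hom_{R\g R}(W_1,\Hom_{k(x)}(N,N')),$$
which is valid because $W_1\cong R\otimes_S\hat{W}_1\otimes_S R$ (Remark \ref{R: morfismos en B-Mod}), produces a $k(x)$-linear isomorphism between $\rad_{{\cal B}\g k(x)}(N,N')$ and $D(\frak{f}'\hat{W}_1\frak{f})\otimes_k k(x)$ that sends each $v^*$ to $f^x_v$. Hence the proposed set $\{f^x_v\}$ is a $k(x)$-basis in each of the four cases (1)--(4), differing only in the appropriate choice of idempotents $\frak{f},\frak{f}'\in\{\frak{f}_0\}\cup\{\frak{f}_z:z\in\frak{Z}\}$. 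The main obstacle is bookkeeping rather than mathematical: keeping the $k(x)$-structure straight across the four cases, in particular confirming that each $\Hom_{k(x)}(N,N')$ is genuinely one-dimensional over $k(x)$ (immediate for $N,N'\in\{k(x)\frak{f}_0,S^x_\frak{f}\}$) and that $\theta^x$ is $k(x)$-linear when the $k(x)$-action on the target comes through the second factor of the tensor product, not through $\hat{W}_1$.
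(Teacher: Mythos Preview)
Your proposal is correct and follows essentially the same approach as the paper: first characterize $\rad_{{\cal B}\g k(x)}(N,N')$ as the morphisms with $f^0=0$ (using locality of the endomorphism ring), then build the $k(x)$-linear isomorphism $D(\frak{f}'\hat{W}_1\frak{f})\otimes_k k(x)\xrightarrow{\theta^x}\Hom_{S\g S}(\hat{W}_1,\Hom_{k(x)}(N,N'))\cong\Hom_{R\g R}(W_1,\Hom_{k(x)}(N,N'))\cong\rad_{{\cal B}\g k(x)}(N,N')$, and track that $v^*\otimes 1\mapsto f^x_v$. The only cosmetic difference is that the paper verifies bijectivity of $\theta^x$ by a dimension count plus explicit surjectivity, whereas you invoke a short chain of natural isomorphisms; both are equally valid.
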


\begin{proof}  Given 
$N,N'\in \{k(x)\frak{f}_0\}\cup\{S^x_{\frak{f}}\mid \frak{f}\in \frak{F} \hbox{ with } \frak{f}\not=\frak{f}_0 \}$, which are one-dimensional $k(x)$-vector spaces, we can show, as in (\ref{L: descripcion de rad(N,N')}), that $\rad_{{\cal B}\g k(x)}(N,N')$ consists   of the morphisms $(f^0,f^1)$ of ${\cal B}\g k(x)$-bimodules from $N$ to $N'$, as described in (\ref{R: morfismos de cal(A)-k(x)-bimods}), with $f^0=0$. 

In each case, we have to describe 
$\rad_{{\cal B}\g k(x)}(N,N')$ where  $N$ and $N'$ are ${\cal B}\g k(x)$-bimodules, which are one-dimensinal over $k(x)$. In order to simplify the  notation, define $\hat{\frak{f}}_0:=\frak{f}_0$, and consider the basis  elements 
 $\hat{\frak{f}}, \hat{\frak{f}}'$, 
 such that, as $k(x)$-vector spaces,  $N=\hat{\frak{f}}k(x)$ and 
 $N'=\hat{\frak{f}}'k(x)$, and take $v\in \hueca{B}_{\frak{f}',\frak{f}}$.  We have the $k(x)$-linear isomorphisms:
$$\begin{matrix}
D(\frak{f}'\hat{W}_1\frak{f})\otimes_kk(x)
&\rightmap{\theta^x} &
\Hom_{S\g S}(\hat{W}_1,\Hom_{k(x)}(N,N'))\hfill\\
&\cong&
\Hom_{R\g R}(W_1,\Hom_{k(x)}(N,N'))\hfill\\
&\cong&
\rad_{{\cal B}\g k(x)}(N,N'),
\hfill\\
\end{matrix}$$
where the first isomorphism $\theta^x$, similar to the one given  in (\ref{L: para construir morfismos entre simples}), maps each generator $\xi\otimes d(x)\in D(\frak{f}'\hat{W}_1\frak{f})\otimes_kk(x)$ onto the morphism 
$\theta^x(\xi\otimes d(x))$ defined by 
$\theta^x(\xi\otimes d(x))(w)[\hat{\frak{f}}]
=\xi(\frak{f}'w\frak{f})
\hat{\frak{f}}'d(x)$; the second  is the extension, and the third one follows from the first paragraph of this proof.

Let us verify that $\theta^x$ is indeed an isomorphism. Notice that we have an isomorphism of $k(x)$-vector spaces $\Hom_{k(x)}(N,N')\cong qk(x)$, where $q:N\rightmap{}N'$ is the $k(x)$-linear map such that $q(\hat{\frak{f}})=\hat{\frak{f}}'$. Every  $\psi\in \Hom_{S\g S}(\hat{W}_1,\Hom_{k(x)}(N,N'))$  is dermined uniquely by the values $\psi(v)$, for $v\in \hueca{B}_{\frak{f}',\frak{f}}$, that is by elements $d_v(x)\in k(x)$, such that $\psi(v)=qd_v(x)$. So, we see that $\theta^x$ is a morphism between $k(x)$-vector spaces with the same dimension $\vert 
\hueca{B}_{\frak{f}',\frak{f}}\vert$.  We claim that $\theta^x$ is surjective, because  
$\psi=\theta^x(
\sum_{v\in \hueca{B}_{\frak{f}',\frak{f}}}v^*\otimes d_v(x))$. Indeed, given $w\in \hueca{B}_{\frak{f}',\frak{f}}$, we have 
$$\theta^x(
\sum_{v\in \hueca{B}_{\frak{f}',\frak{f}}}v^*\otimes d_v(x))(w)[\hat{\frak{f}}]=
\sum_v v^*(w)\hat{\frak{f}}'d_v(x)
=\hat{\frak{f}}'d_w(x)
=q(\hat{\frak{f}})d_w(x)=\psi(w)[\hat{\frak{f}}].$$

 The $k(x)$-basis $\{v^*\otimes 1\mid v\in \hueca{B}_{\frak{f}',\frak{f}}\}$ of $D(\frak{f}'\hat{W}_1\frak{f})\otimes_kk(x)$ is mapped by this composition to the corresponding proposed basis of radical morphisms. In order to verify this, we  notice that, for $w\in \hat{W}_1$, we have  
$$\theta^x(v^*\otimes 1)(w)[\hat{\frak{f}}]=v^*(\frak{f}'w\frak{f})\hat{\frak{f}}'=\hat{v}(w)\hat{\frak{f}}'.$$
\end{proof}

 \begin{lemma}\label{L: de las compos de x-basicos}
Given $\frak{f},\frak{f}'\not=\frak{f}_0$,  we have:
\begin{enumerate}
\item For $v_1\in \hueca{B}_{\frak{f},\frak{f}_0}$ and $v_2\in \hueca{B}_{\frak{f}_0,\frak{f}}$, we have 
$$f_{v_2}^x f_{v_1}^x=(0,\psi_{v_2,v_1}^x), \hbox{ with } \psi_{v_2,v_1}^x(v)=c^v_{v_2,v_1}(x,x)id_{k(x)\frak{f}_0}, \hbox{ for } v\in \hueca{B}_{\frak{f}_0,\frak{f}_0}.$$
\item For $v_1\in \hueca{B}_{\frak{f},\frak{f}_0}$, 
 $v_2\in \hueca{B}_{\frak{f}',\frak{f}}$, and $v_3\in \hueca{B}_{\frak{f}_0,\frak{f}'}$ we have 
$$f^x_{v_3} f^x_{v_2} f_{v_1}^x=(0,\psi_{v_3,v_2,v_1}^x), \hbox{ with } \psi_{v_3,v_2,v_1}^x(v)=c^v_{v_3,v_2,v_1}(x,x)id_{k(x)\frak{f}_0}, \hbox{ for } v\in \hueca{B}_{\frak{f}_0,\frak{f}_0}.$$
\end{enumerate}
\end{lemma}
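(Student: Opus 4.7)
The strategy is to repeat, verbatim up to notation, the argument of Lemma \ref{L: de las lambda-compos}, replacing the simple module $S_\lambda$ by the one-dimensional $k(x)$-vector space $k(x)\frak{f}_0$, the intermediate simples $S_{\frak{f}}, S_{\frak{f}'}$ by the $k(x)$-bimodules $S^x_{\frak{f}}, S^x_{\frak{f}'}$, and the scalar $\lambda$ (which is how $x$ acts on $R\frak{f}_0$ via the quotient $R\frak{f}_0\to R\frak{f}_0/(x-\lambda)$) by the generic scalar $x$ itself (which is how $x$ acts on $k(x)\frak{f}_0$). Since a morphism of proper ${\cal B}\g k(x)$-bimodules is just a ${\cal B}$-module morphism that happens to be $k(x)$-linear in each component, the composition rule recalled in Remark \ref{R: compos de segundas comp en B-Mod} applies unchanged, so $f^x_{v_2}f^x_{v_1}=(0,\psi^x_{v_2,v_1})$ with $\psi^x_{v_2,v_1}(v)=\pi(\psi^x_{v_2}\otimes\psi^x_{v_1})(\delta(v))$, and analogously for the triple product with $(1\otimes\delta)\delta(v)$.

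For part (1), I would first establish the computational claim that for any polynomial $c(x,y)\in k[x,y]$ and basis elements $w_1\in\hueca{B}_{\frak{f},\frak{f}_0}$, $w_2\in\hueca{B}_{\frak{f}_0,\frak{f}}$,
$$\pi(\psi^x_{v_2}\otimes\psi^x_{v_1})\bigl[c(x,y)(w_2\otimes w_1)\bigr]=c(x,x)\,\hat{v}_2(w_2)\hat{v}_1(w_1)\,id_{k(x)\frak{f}_0}.$$
This is the direct analogue of the claim inside the proof of Lemma \ref{L: de las lambda-compos}: writing $c(x,y)=\sum_i a_i(y)x^i$ and evaluating at $\frak{f}_0\in k(x)\frak{f}_0$, the $R$-$R$-bilinearity of $\psi^x_{v_2},\psi^x_{v_1}$ together with the $k(x)$-linearity built into their definitions (see Lemma \ref{L: k(x)-bases para rad entre bimodulos simples}) yields $\sum_i a_i(x)x^i\hat{v}_2(w_2)\hat{v}_1(w_1)\frak{f}_0=c(x,x)\hat{v}_2(w_2)\hat{v}_1(w_1)\frak{f}_0$. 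Then I plug in the expansion of $\delta(v)$ from Remark \ref{R: descriptions of delta and}: the main sum contributes the terms $c^v_{w_2,w_1}(x,x)\hat{v}_2(w_2)\hat{v}_1(w_1)id_{k(x)\frak{f}_0}$, which collapse (by definition of the dual basis $\hat{v}_i$) to $c^v_{v_2,v_1}(x,x)id_{k(x)\frak{f}_0}$; the remainder $r(v)$ lies in $H=\frak{f}_0W_1\frak{f}_0\otimes_{R_0}\frak{f}_0W_1\frak{f}_0$, whose basis-expansion involves no tensor $w_2\otimes w_1$ with $w_1\in\hueca{B}^a$ and $w_2\in\hueca{B}^b$ (those two factors sit between $\frak{f}_0$ and a non-marked idempotent), so $(\psi^x_{v_2}\otimes\psi^x_{v_1})(r(v))=0$.

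For part (2), the argument is formally identical but one step deeper. I would first prove the analogous claim
$$\pi(\psi^x_{v_3}\otimes\psi^x_{v_2}\otimes\psi^x_{v_1})\bigl[c(x,y)(w_3\otimes w_2\otimes w_1)\bigr]=c(x,x)\,\hat{v}_3(w_3)\hat{v}_2(w_2)\hat{v}_1(w_1)\,id_{k(x)\frak{f}_0},$$
by the same bookkeeping of $x$-powers and $k(x)$-linearity. Expanding $(1\otimes\delta)\delta(v)=(\delta\otimes 1)\delta(v)$ according to Remark \ref{R: descriptions of delta and}, the main sum produces $c^v_{v_3,v_2,v_1}(x,x)id_{k(x)\frak{f}_0}$, and the remainder $r'(v)$ is annihilated because it lives in the direct sum $\frak{f}_0W_1\frak{f}_0\otimes W_1\otimes W_1+W_1\otimes W_1\otimes \frak{f}_0W_1\frak{f}_0$, whose basis expansion contains no tensor $w_3\otimes w_2\otimes w_1$ with $w_1\in\hueca{B}^a$ and $w_3\in\hueca{B}^b$.

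The only subtle point, and the one worth double-checking, is the vanishing of the remainders $r(v)$ and $r'(v)$ under the relevant compositions. In the simple-module case of Lemma \ref{L: de las lambda-compos} this used the $R$-$R$-bilinearity of the $\psi^\lambda_{v_i}$ together with the fact that $\frak{f}_0$ acts as zero on $S_{\frak{f}}$ for $\frak{f}\ne\frak{f}_0$; here the same mechanism works verbatim, since $\frak{f}_0$ acts as zero on each $S^x_{\frak{f}}$ (for $\frak{f}\ne\frak{f}_0$), so that any basis tensor in $r(v)$ or $r'(v)$ carrying a ``wrong'' idempotent on one of its endpoints is killed by the composition $\pi(\psi^x_{v_2}\otimes\psi^x_{v_1})$ or $\pi(\psi^x_{v_3}\otimes\psi^x_{v_2}\otimes\psi^x_{v_1})$. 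Once that is checked, the statement follows.
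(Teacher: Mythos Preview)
Your proposal is correct and follows essentially the same approach as the paper's own proof: both establish the key claim $\pi(\psi^x_{v_2}\otimes\psi^x_{v_1})[c(x,y)(w_2\otimes w_1)]=c(x,x)\hat{v}_2(w_2)\hat{v}_1(w_1)id_{k(x)\frak{f}_0}$ by the same $R$-$R$-bilinearity computation, then plug in the expansion of $\delta(v)$ (resp.\ $(1\otimes\delta)\delta(v)$) from Remark~\ref{R: descriptions of delta and} and observe that the remainder $r(v)$ (resp.\ $r'(v)$) is killed because its basis expansion contains no tensors of the required idempotent type. Your justification for the vanishing of the remainders via the idempotent action is equivalent to the paper's phrasing that the basis expansion ``does not contain the elements $v_1$ and $v_2$''.
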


\begin{proof} (1): By definition, we have,
$\psi^x_{v_1}\in \Hom_{R\g R}(W_1,\Hom_{k(x)}(k(x)\frak{f}_0,S^x_\frak{f}))$  and  $\psi^x_{v_2}\in \Hom_{R\g R}(W_1,\Hom_{k(x)}(S^x_\frak{f},k(x)\frak{f}_0)).$
We claim that, for $w_1,w_2\in \hueca{B}$ and  any polynomial $c(x,y)\in k[x,y]$, we have 
$$\pi (\psi^x_{v_2}\otimes\psi^x_{v_1})[c(x,y)(w_2\otimes w_1)]=c(x,x)\hat{v}_2(w_2)\hat{v}_1(w_1)id_{k(x)\frak{f}_0}.$$  
Indeed, if $c(x,y)=\sum_{i=0}^ma_i(y)x^i$, we have
$$\begin{matrix}
\pi (\psi^x_{v_2}\otimes\psi^x_{v_1})[c(x,y)(w_2\otimes w_1)](\frak{f}_0)
&=&
\sum_{i=0}^m\pi(\psi^x_{v_2}[a_i(x)w_2]\otimes \psi^x_{v_1}[w_1x^i])(\frak{f}_0)\hfill\\
&=&
\sum_{i=0}^ma_i(x)x^i\hat{v}_2(w_2)\hat{v}_1(w_1)\frak{f}_0\hfill\\
&=&
c(x,x)\hat{v}_2(w_2)\hat{v}_1(w_1)\frak{f}_0.\hfill\\
\end{matrix}$$
Then, as remarked in (\ref{R: compos de segundas comp en B-Mod}), with the description of $\delta(v)$ given  in (\ref{R: descriptions of delta and}), we have 
$$\begin{matrix} 
\psi^x_{v_2,v_1}(v)&=&\pi (\psi^x_{v_2}\otimes\psi^x_{v_1})(\delta(v))\hfill\\
&=&
\sum_{w_1\in \hueca{B}^a,w_2\in \hueca{B}^b}\pi (\psi^x_{v_2}\otimes\psi^x_{v_1})(c^v_{w_2,w_1}(x,y)(w_2\otimes w_1))\hfill\\
&&+\,\pi (\psi^x_{v_2}\otimes\psi^x_{v_1})(r(v)).\hfill\\
\end{matrix}$$
Since $r(v)\in H$, its expression as a combination of tensor product involving  basis  elements in $\hueca{B}$ does not contain the elements $v_1$ and $v_2$, thus  $(\psi^x_{v_2}\otimes\psi^x_{v_1})(r(v))=0$.  Then (1) follows from our claim and the definition of the maps $\psi_{v_1}^x$ and $\psi_{v_2}^x$. 
\medskip

\noindent(2): As before, by definition, 
$\psi^x_{v_1}\in \Hom_{R\g R}(W_1,\Hom_{k(x)}(k(x)\frak{f}_0,S^x_\frak{f}))$,  
$\psi_{v_2}\in \Hom_{R\g R}(W_1,\Hom_{k(x)}(S^x_\frak{f},S^x_{\frak{f}'})),$
and $\psi^x_{v_3}\in \Hom_{R\g R}(W,\Hom_{k(x)}(S^x_{\frak{f}'},k(x)\frak{f}_0))$. Now, we claim that, for $w_1, w_2, w_3\in \hueca{B}$ and any polynomial $c(x,y)\in k[x,y]$, we have 
$$\pi (\psi^x_{v_3}\otimes\psi^x_{v_2}\otimes\psi^x_{v_1})[c(x,y)(w_3\otimes w_2\otimes w_1)]=c(x,x)\hat{v}_3(w_3)\hat{v}_2(w_2)\hat{v}_1(w_1)id_{k(x)\frak{f}_0}.$$ 
Which is easily verified, as before.   
Now, as remarked in (\ref{R: compos de segundas comp en B-Mod}), with the description of $(1\otimes\delta)\delta(v)$ given  in (\ref{R: descriptions of delta and}), we have 

\vbox{$$
\psi^x_{v_3,v_2,v_1}(v)
=\pi (\psi^x_{v_3}\otimes\psi^x_{v_2}\otimes\psi^x_{v_1})((1\otimes\delta)\delta(v))=$$
$$
\sum_{w_3\in \hueca{B}^b,w_2\in \hueca{B}_{\frak{Z}},w_1\in \hueca{B}^a}\pi (\psi^x_{v_3}\otimes\psi^x_{v_2}\otimes\psi^x_{v_1})(c^v_{w_3,w_2,w_1}(x,y)(w_3\otimes w_2\otimes w_1))$$
$$+\,\,\pi (\psi^x_{v_3}\otimes\psi^x_{v_2}\otimes\psi^x_{v_1})(r'(v)).$$}
Here, $r'(v)\in H'$, so its expression as a combination involving  tensor products of basis  elements in $\hueca{B}$ does not contain elements of the form $x^iv_3\otimes v_2\otimes v_1x^j$, so the last term in the preceding equality is zero.  Now, (2) follows from the preceding claim and the definition of the maps $\psi^x_{v_3}$, $\psi^x_{v_2}$, and $\psi^x_{v_1}$.  
\end{proof}

 \begin{remark} Let us denote by 
 ${\cal P}^1_{k(x)}(\Lambda)$ the category of 
 ${\cal P}^1(\Lambda)\g k(x)$-bimodules. Recall that its objects $X$ are objects  $X\in {\cal P}^1(\Lambda)$ equipped with an algebra morphism $\alpha_X:k(x)\rightmap{}\End_{{\cal P}^1(\Lambda)}(X)^{op}$. Given two ${\cal P}^1(\Lambda)\g k(x)$-bimodules $X$ and $X'$, a morphism  $u:X\rightmap{}X'$ of ${\cal P}^1(\Lambda)\g k(x)$-bimodules is a morphism in ${\cal P}^1(\Lambda)$ with $u\alpha_X(d)=\alpha_{X'}(d)u$, for all $d\in k(x)$. Thus the objects in ${\cal P}_{k(x)}^1(\Lambda)$ are triples $X=(P_1,P_2,\phi)\in {\cal P}^1(\Lambda)$, where $P_1$ and $P_2$ are $\Lambda\g k(x)$-bimodules and a morphism $u:X\rightmap{}X'$ in ${\cal P}_{k(x)}^1(\Lambda)$  is a morphism in ${\cal P}^1(\Lambda)$ such that $u_1$ and $u_2$ are morphisms of $\Lambda\g k(x)$-bimodules. We can identify ${\cal P}_{k(x)}^1(\Lambda)$ with ${\cal P}^1(\Lambda\otimes_kk(x))$, see (\ref{R: extension and radical}).
 
 Notice that we have the following up to isomorphism commutative diagram
$$\begin{matrix}
{\cal D}\g\Mod&\rightmap{ \ \Xi_\Lambda \ }&{\cal P}^1(\Lambda)\\
\shortlmapup{\sigma_{{\cal D}}}&&\shortrmapup{\sigma}\\
{\cal D}\g k(x)\g\Mod_p&\rightmap{ \Xi_\Lambda^{k(x)}}&{\cal P}^1_{k(x)}(\Lambda)\\
\shortlmapup{(-)^{k(x)}}&&\shortrmapup{(-)^{k(x)}}\\
{\cal D}\g\Mod&\rightmap{ \ \Xi_\Lambda \ }&{\cal P}^1(\Lambda),\\
\end{matrix}$$
where $\sigma$ denotes the forgetful-embedding functor and $(-)^{k(x)}$ is the scalar extension functor of (\ref{R: morfismos de cal(A)-k(x)-bimods}). 
 \end{remark}

\begin{lemma}\label{L: F conmuta con otimes k(x)} The functor $F:{\cal B}\g\Mod\rightmap{}{\cal P}^1(\Lambda)$ satisfies that, for any $X\in {\cal B}\g\Mod$, we have  a natural isomorphism 
$\Psi_X:F(X\otimes_kk(x))\rightmap{} F(X)\otimes_k k(x)$.  
So, for $t\in [1,n]$, we have 
$$
F(S_{\frak{f}_{z_t}}^x)= F(S_{\frak{f}_{z_t}}\otimes_kk(x))\cong F(S_{\frak{f}_{z_t}})\otimes_kk(x)=\hueca{S}_t\otimes_kk(x)$$
and, since the scalar extension functor 
$(-)^{k(x)}:{\cal B}\g\Mod\rightmap{}
{\cal B}\g{k(x)}\g\Mod_p$ maps $f_{u_t}$ onto  $f^x_{u_t}=f_{u_t}\otimes id$, we have the commutative diagram   
$$\begin{matrix}
F(S_{\frak{f}_{l_t}}^x)&=&F(S_{\frak{f}_{l_t}}\otimes_kk(x))&\rightmap{\Psi}&F(S_{\frak{f}_{l_t}})\otimes_kk(x)=\hueca{L}_t\otimes_kk(x)\\
\rmapdown{F(f^x_{u_t})}&&\hbox{ \hskip.6cm }\rmapdown{F(f_{u_t}\otimes id)}&&\rmapdown{F(f_{u_t})\otimes id}\hbox{\hskip 2.5cm}\rmapdown{\hat{\gamma}_t}\\
F(S_{\frak{f}_{r_t}}^x)&=&F(S_{\frak{f}_{r_t}}\otimes_kk(x))&\rightmap{\Psi}&F(S_{\frak{f}_{r_t}})\otimes_kk(x)=\hueca{R}_t\otimes_kk(x).\\
\end{matrix}$$
\end{lemma}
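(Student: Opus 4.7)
The plan is to combine two ingredients: (i) a compatibility of the functor $F=\Xi_\Lambda H$ with the scalar extension functor $(-)^{k(x)}$, and (ii) a direct calculation identifying $f_{u_t}\otimes id$ with $f^x_{u_t}$. For (i), I would invoke the commutative diagram displayed in the previous remark: for each reduction functor $F^z$, the two squares asserted to commute show that $\sigma_{\cal A}\circ (F^z)^{k(x)}\circ (-)^{k(x)}\cong F^z\circ\sigma_{{\cal A}^z}\circ (-)^{k(x)}$. Since $\sigma\circ(-)^{k(x)}$ is just ``forget the $k(x)$-structure'', this gives a natural isomorphism $F^z(X\otimes_k k(x))\cong F^z(X)\otimes_k k(x)$ of ${\cal A}$-modules. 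The same compatibility holds for $\Xi_\Lambda$ by the analogous diagram stated right before this lemma. Composing along ${\cal B}\g\Mod\xrightarrow{H}{\cal D}\g\Mod\xrightarrow{\Xi_\Lambda}{\cal P}^1(\Lambda)$ yields the desired natural isomorphism $\Psi_X$. Specializing at $X=S_{\frak{f}_{z_t}}$, and using that $F(S_{\frak{f}_{z_t}})=S(\Lambda e_t)$, gives the first displayed identification; the same substitution at $X=S_{\frak{f}_{l_t}}$ and $X=S_{\frak{f}_{r_t}}$ produces the horizontal arrows $\Psi$ in the square.

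For the commutativity of the square, the key verification is that scalar extension sends $f_{u_t}$ to $f^x_{u_t}$, and $\gamma_t$ to $\hat{\gamma}_t$. Comparing Lemma~\ref{L: bases para rad entre simples}(1) with Lemma~\ref{L: k(x)-bases para rad entre bimodulos simples}(1): the morphism $f_{u_t}=(0,\hat{\psi}_{u_t})$ has second component with $\psi_{u_t}(w)[\frak{f}_{l_t}]=\hat{u}_t(w)\frak{f}_{r_t}$, so $f_{u_t}\otimes id=(0,\psi_{u_t}\otimes id)$ evaluated at $\hat{\frak{f}}_{l_t}=\frak{f}_{l_t}\otimes 1$ gives $\hat{u}_t(w)\frak{f}_{r_t}\otimes 1=\hat{u}_t(w)\hat{\frak{f}}_{r_t}$, matching the defining formula of $\psi^x_{u_t}$. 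Hence $f_{u_t}\otimes id=f^x_{u_t}$. Likewise, $\gamma_t=(\phi_t,\psi_t)$ becomes $\gamma_t\otimes id=(\phi_t\otimes id,\psi_t\otimes id)=\hat{\gamma}_t$. Starting from the equality $F(f_{u_t})=\gamma_t$ of Remark~\ref{R: choice of special basis}, applying $(-)^{k(x)}$ on both sides gives $F(f_{u_t})\otimes id=\hat{\gamma}_t$ in ${\cal P}^1_{k(x)}(\Lambda)$. Naturality of $\Psi$ then translates this to the commutativity of the displayed square.

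The main obstacle is ensuring that the compatibility of each reduction functor with $(-)^{k(x)}$ is natural in morphisms (not merely pointwise on objects), so that the resulting $\Psi$ is genuinely a natural transformation; this is what allows one to pass from the object-level identification $F^z(X\otimes_k k(x))\cong F^z(X)\otimes_k k(x)$ to the morphism-level identity $\Psi_Y\circ F^z(f\otimes id)=(F^z(f)\otimes id)\circ\Psi_X$. Once this naturality is established, the remaining bookkeeping amounts to tracking the explicit formulas for the second components of the relevant morphisms, which follows the pattern already developed in Sections~4 and~5.
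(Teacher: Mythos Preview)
Your proposal is correct and follows essentially the same route as the paper: concatenate the up-to-isomorphism commutative squares from Remark~\ref{R: morfismos de cal(A)-k(x)-bimods} for each reduction functor $F^{z_i}$ together with the analogous square for $\Xi_\Lambda$, obtaining a natural isomorphism $\Psi$ for $F=\Xi_\Lambda F^{z_1}\cdots F^{z_n}$, and then deduce the displayed square from naturality. Your additional explicit verification that $f_{u_t}\otimes id=f^x_{u_t}$ and $\gamma_t\otimes id=\hat{\gamma}_t$ is more detailed than the paper's one-line ``the last part follows immediately from the naturality of $\Psi$'', but the underlying argument is the same.
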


\begin{proof} Recall that the functor $F$ fixed at the beginning of \S4 is a composition of functors of the form $F=\Xi_\Lambda F^{z_1}\ldots F^{z_n}$, where $\Xi_\Lambda:{\cal D}\g\Mod\rightmap{}{\cal P}^1(\Lambda)$ is the usual equivalence, with ${\cal D}={\cal D}(\Lambda)$ the Drozd's ditalgebra of $\Lambda$, and $F^{z_1},\ldots,F^{z_n}$ are reduction functors of types $z_i\in \{a,r,d,e,u,X,l\}$. Here, 
the minimal ditalgebra ${\cal B}$ is obtained 
after a finite sequence of reductions 
$${\cal D}\mapsto {\cal D}^{z_1}\mapsto\cdots\mapsto {\cal D}^{z_1z_2\cdots z_n}={\cal B}$$
the superindex $z_i$ of each reduction functor $F^{z_i}:{\cal D}^{z_1\cdots z_i}\g\Mod\rightmap{}{\cal D}^{z_1\cdots z_{i-1}}\g\Mod$   denotes the type of reduction to which $F^{z_i}$ is associated.   

Concatenating the diagrams corresponding to each reduction functor $F^{z_i}$ described in (\ref{R: morfismos de cal(A)-k(x)-bimods}) and the diagram of the last remark, we obtain an up to isomorphism commutative diagram 
$$\begin{matrix}
{\cal B}\g\Mod&
\rightmap{\hbox{\hskip.5cm}  F \hbox{\hskip.5cm}}&{\cal P}^1(\Lambda)\\
\shortlmapup{\sigma_{\cal D}}&&\shortrmapup{\sigma}\\
{\cal B}\g k(x)\g\Mod_p&\rightmap{ \ \ F^{k(x)} \ \ }&{\cal P}_{k(x)}^1(\Lambda)\\
\shortlmapup{(-)^{k(x)}}&&\shortrmapup{(-)^{k(x)}}\\
{\cal B}\g\Mod&
\rightmap{ \hbox{\hskip.5cm}  F \hbox{\hskip.5cm}}&{\cal P}^1(\Lambda).\\
\end{matrix}$$
The last part follows immediately from the naturality of $\Psi$. 
\end{proof}

\begin{proposition}\label{P: radEnd(G)=0 -> radEnd(M(lambda))=0, casi todo lambda}
If $\rad\End_\Lambda(G)=0$, then $\rad \End_\Lambda(M(\lambda))=0$, for infinitely many $\lambda\in D(h)$. 
\end{proposition}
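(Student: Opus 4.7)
The plan is to establish the solvability in $k(x)$ of the equations $E(x)_{v,w}$, after which Proposition~\ref{P: la eq E(x)} will give $\End_\Lambda(M(\lambda))=k$ for infinitely many $\lambda\in D(h)$, hence $\rad\End_\Lambda(M(\lambda))=0$ since $k$ is algebraically closed and $M(\lambda)$ is indecomposable. The whole strategy is the ${\cal B}\g k(x)$-bimodule analog of the chain that led from rad endomorphisms of $S_\lambda$ to the equations $E(\lambda)_{v,w}$ in Section~4, applied now to the generic bimodule $k(x)\frak{f}_0$ in place of the simples $S_\lambda$.

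First, I translate the hypothesis $\rad\End_\Lambda(G)=0$ into a factorization statement in ${\cal P}^1(\hat\Lambda)$. Since $\End_\Lambda(G)$ is split over its radical with residue field $k(x)$, the hypothesis forces $\End_\Lambda(G)\cong k(x)$, and this action is through endomorphisms, so every $\Lambda$-linear endomorphism of $G$ is automatically $k(x)$-linear; hence $\End_{\hat\Lambda}(G)=k(x)$ and $\rad\End_{\hat\Lambda}(G)=0$ as well. Viewing $\hat G$ as an object of ${\cal P}^1(\hat\Lambda)$ with $\Coker(\hat G)\cong G$, the cokernel functor induces (by fullness, $\Coker$ being dense in the relevant sense, and $\hat G$ having local endomorphism ring) a surjection $\End_{{\cal P}^1(\hat\Lambda)}(\hat G)\twoheadrightarrow\End_{\hat\Lambda}(G)=k(x)$ carrying $\rad$ onto $\rad=0$. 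Therefore every $u\in\rad\End_{{\cal P}^1(\hat\Lambda)}(\hat G)$ satisfies $\Coker(u)=0$, and by Proposition~\ref{P: caract de u con cokeru=0 para hat(Lambda)} is a finite sum of morphisms factoring through some $\hat\gamma_t$ or some $S(\Lambda e_t)\otimes_k k(x)$.

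Next, I transport this factorization to the ditalgebra side. The functor $F$ extends to a full and faithful functor ${\cal B}\g k(x)\g\Mod_p\rightmap{}{\cal P}_{k(x)}^1(\Lambda)$, and Lemma~\ref{L: F conmuta con otimes k(x)} identifies $F(k(x)\frak{f}_0)\cong\hat G$, $F(S^x_{\frak{f}_{z_t}})\cong S(\Lambda e_t)\otimes_k k(x)$, and $F(f^x_{u_t})\cong\hat\gamma_t$. Consequently, every element of $\rad\End_{{\cal B}\g k(x)\g\Mod_p}(k(x)\frak{f}_0)$ is a finite sum of morphisms that factor through some $f^x_{u_t}$ or some $S^x_{\frak{f}_{z_t}}$. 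Applied to each basis element $f^x_w$ with $w\in\hueca{B}_{\frak{f}_0,\frak{f}_0}$ from Lemma~\ref{L: k(x)-bases para rad entre bimodulos simples}(4), and expanded via the composition formulas of Lemma~\ref{L: de las compos de x-basicos}, the verbatim analog of Proposition~\ref{P: facts de un endomorf radical de Slambda} (with $\lambda$ replaced by $x$) produces scalars $d^w_{v_2,t,v_1}(x),\,d^w_{v_2,v_1}(x)\in k(x)$ satisfying
$$\psi^x_w=\sum_{(v_2,t,v_1)\in\hueca{I}_3}d^w_{v_2,t,v_1}(x)\,\psi^x_{v_2,u_t,v_1}+\sum_{(v_2,v_1)\in\hueca{I}_2}d^w_{v_2,v_1}(x)\,\psi^x_{v_2,v_1}.$$
Evaluating both sides at each $v\in\hueca{B}_{\frak{f}_0,\frak{f}_0}$, using $\hat w(v)=\delta_{v,w}$ together with the evaluations supplied by Lemma~\ref{L: de las compos de x-basicos}, yields exactly $E(x)_{v,w}$.

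The main technical obstacle lies in verifying that $F$ is full and faithful on proper bimodules and that the commutative square of Lemma~\ref{L: F conmuta con otimes k(x)} really identifies the distinguished morphisms and objects at the two levels; once this calibration is in place, the derivation of the $E(x)_{v,w}$ becomes a direct $k(x)$-coefficient transcription of Propositions~\ref{P: facts de un endomorf radical de Slambda} and \ref{P: facts de endomorfs radicales de Slambda}. Proposition~\ref{P: la eq E(x)} then delivers the conclusion.
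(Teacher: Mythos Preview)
Your proposal is correct and follows essentially the same route as the paper's proof: translate $\rad\End_\Lambda(G)=0$ into vanishing of the radical at the $\hat\Lambda$-level, apply Proposition~\ref{P: caract de u con cokeru=0 para hat(Lambda)} to $\hat G$, pull the factorizations back through $F^{k(x)}$ using Lemma~\ref{L: F conmuta con otimes k(x)}, expand each basis element $f^x_w$ via Lemma~\ref{L: k(x)-bases para rad entre bimodulos simples} and Lemma~\ref{L: de las compos de x-basicos}, and read off the equations $E(x)_{v,w}$. The only cosmetic difference is that the paper specializes the $E(x)_{v,w}$ to $E(\lambda)_{v,w}$ and invokes Corollary~\ref{C: caract de que End(Mlambda)=k casi siempre} directly, whereas you appeal to Proposition~\ref{P: la eq E(x)}; these amount to the same thing.
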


\begin{proof} If $\rad\End_\Lambda(G)=0$, then clearly, $\rad\End_{\Lambda\g k(x)}(G)=0$.  Assume that  $f\in \rad\End_{{\cal B}\g k(x)}(k(x)\frak{f}_0)$, then we have  $\Coker F(f)\in \rad\End_{\Lambda\g k(x)}(G)$. Therefore, $\Coker(F(f))=0$.  From (\ref{R: extendidos de generadores son generadores}), (\ref{R: isomorfos a generadores son generadores}), and 
(\ref{L: F conmuta con otimes k(x)}), we know that the families 
$\{id_{F(S^x_{\frak{f}_{z_t}})}\}_t$ and $\{F(f_{u_t}^x):
F(S^x_{\frak{f}_{l_t}})\rightmap{}F(S^x_{\frak{f}_{r_t}})\}_t$ are a  
kernel-generating system for $\Coker:{\cal P}^1(\hat{\Lambda})\rightmap{}\hat{\Lambda}\g\Mod$. Then we have that $F(f)$ is a finite sum of morphisms that  factor  through an object of the form $F(S^x_{\frak{f}_{z_t}})$ 
 or through a morphism of the form $F(f^x_{u_t})$. 

Since the functor $F$ is full and faithful, we get that the morphism $f$ is a finite sum of morphisms that  factor through an object of the form $S^x_{\frak{f}_{z_t}}$ or through a morphism of the form $f^x_{u_t}:S^x_{\frak{f}_{l_t}}\rightmap{}S^x_{\frak{f}_{r_t}}$. This holds for every morphism in $\rad\End_{{\cal B}\g k(x)}(k(x)\frak{f}_0)$.  
In particular, for the radical morphism $f^x_v$, for each $v\in \hueca{B}_{\frak{f}_0,\frak{f}_0}$.   Then, by (\ref{L: k(x)-bases para rad entre bimodulos simples}), we know 
there are elements $d^v_{v_2,t,v_1}(x)$ and $d^v_{v_2,v_1}(x)$ in $k(x)$ such that 
$$f^x_v=\sum_{(v_2,t,v_1)\in \hueca{I}_3}
d_{v_2,t,v_1}^v(x)f^x_{v_2}f^x_{u_t}f^x_{v_1}
+
\sum_{(v_2,v_1)\in \hueca{I}_2}
d^v_{v_2,v_1}(x)f^x_{v_2}f^x_{v_1}. $$
This is equivalent, by (\ref{L: de las compos de x-basicos}), to the equality
$$\psi_v^x=\sum_{(v_2,t,v_1)\in \hueca{I}_3}
d^v_{v_2,t,v_1}(x)\psi^x_{v_2,u_t,v_1}
+
\sum_{(v_2,v_1)\in \hueca{I}_2}d^v_{v_2,v_1}(x)\psi^x_{v_2,v_1}.$$
Evaluating at $w\in \hueca{B}_{\frak{f}_0,\frak{f}_0}$, we obtain 
$$\delta_{v,w}=\sum_{(v_2,t,v_1)\in \hueca{I}_3}
d^v_{v_2,t,v_1}(x)c^w_{v_2,u_t,v_1}(x,x)
+
\sum_{(v_2,v_1)\in \hueca{I}_2}d^v_{v_2,v_1}(x)c^w_{v_2,v_1}(x,x).$$
 Then, for infinitely $\lambda\in D(h)$, we have 
$$\delta_{v,w}=\sum_{(v_2,t,v_1)\in \hueca{I}_3}
d^v_{v_2,t,v_1}(\lambda)c^w_{v_2,u_t,v_1}(\lambda,\lambda)
+
\sum_{(v_2,v_1)\in \hueca{I}_2}d^v_{v_2,v_1}(\lambda)c^w_{v_2,v_1}(\lambda,\lambda).$$
From (\ref{C: caract de que End(Mlambda)=k casi siempre}), we get that 
$\End_\Lambda(M(\lambda))=k$. 
\end{proof}

\section{More on radical morphisms of ${\cal B}$-modules}

We want to describe a nice  set of generators of the radicals $\rad_{\cal B}(k(x)\frak{f}_0,k(x)\frak{f}_0)$ and $\rad_{\cal B}(S^x_{\frak{f}},k(x)\frak{f}_0)$, and to have a practical description of their compositions. So, we need some preliminary lemmas. 

\begin{lemma}\label{L: cocientes combin lin de basicos en k(x)} For any $r,s\in \hueca{N}$ and $\lambda\in k$, we have in $k(x)$: 
$$x^r/(x-\lambda)^s=h_{r,s}(x)+u_{r,s}(x)+\lambda^r/(x-\lambda)^s,$$
where $h_{r,s}(x)$ is a polynomial in $k[x]$ and $u_{r,s}(x)\in k(x)$ is a $k$-linear combination of the rational functions $\{1/(x-\lambda)^i\mid 0<i<s\}$. 
\end{lemma}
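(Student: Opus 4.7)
The plan is to reduce the statement to a direct application of the binomial theorem applied to $x^r = \bigl((x-\lambda)+\lambda\bigr)^r$. Specifically, expanding gives
$$x^r = \sum_{j=0}^{r}\binom{r}{j}\lambda^{r-j}(x-\lambda)^{j},$$
so, dividing by $(x-\lambda)^s$, we obtain
$$\frac{x^r}{(x-\lambda)^s}=\sum_{j=0}^{r}\binom{r}{j}\lambda^{r-j}(x-\lambda)^{j-s}.$$
I would then split this single sum according to the sign of $j-s$, setting aside the term $j=0$ that produces $\lambda^{r}/(x-\lambda)^s$.

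In detail, the $j=0$ summand is exactly $\lambda^{r}/(x-\lambda)^s$. The summands with $0<j<s$ contribute
$$u_{r,s}(x):=\sum_{0<j<s}\binom{r}{j}\lambda^{r-j}\frac{1}{(x-\lambda)^{s-j}},$$
which, after reindexing by $i=s-j$, is visibly a $k$-linear combination of the functions $\{1/(x-\lambda)^i\mid 0<i<s\}$. Finally, the summands with $s\le j\le r$ (an empty sum when $r<s$) contribute
$$h_{r,s}(x):=\sum_{j=s}^{r}\binom{r}{j}\lambda^{r-j}(x-\lambda)^{j-s},$$
which is a polynomial in $x-\lambda$, hence in $x$. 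Adding the three pieces recovers the displayed formula.

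There is really no hard step here; the only thing to check is that the edge cases behave as claimed (if $r<s$ then $h_{r,s}=0$; if $s=1$ then $u_{r,s}=0$), and both are immediate from the convention that an empty sum is zero. Thus the lemma follows from a single application of the binomial theorem, and I would write the proof in essentially two lines plus the identification of the three pieces.
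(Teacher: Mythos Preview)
Your proof is correct and follows essentially the same route as the paper: both expand $x^r$ in powers of $(x-\lambda)$, identify the constant term as $\lambda^r$, and then divide by $(x-\lambda)^s$. The only difference is cosmetic—you invoke the binomial theorem to obtain explicit coefficients, while the paper simply notes that such an expansion exists (via the substitution $x\mapsto x-\lambda$) and reads off the constant term by evaluating at $\lambda$.
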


\begin{proof} Consider the isomorphism of algebras $k[x]\rightmap{}k[x]$ defined by $x\mapsto x-\lambda$. Thus, any  $f\in k[x]$ with degree $r$   
has the form $f(x)=\sum_{i=0}^r\lambda_i(x-\lambda)^i$, with $\lambda_0,\ldots,\lambda_r\in k$. For $f(x)=x^r$, evaluating at $\lambda$, we get $\lambda^r=f(\lambda)=\lambda_0$. Thus, $x^r=\sum_{i=1}^r\lambda_i(x-\lambda)^i+\lambda^r$. Then, we can divide by $(x-\lambda)^s$ on both sides of this equation to obtain  the expression in the statement of the lemma. 
\end{proof}

\begin{notation}\label{N: def de qsubc} Given any $r(x)\in k(x)$, we denote by $\mu_{r(x)}\in \End_k(k(x))$ the linear map given by multiplication by $r(x)$. 
For any linear map $q\in \End_k(k(x))$, we consider the associated morphism of $k$-algebras $k[x,y]\rightmap{}\End_k(k(x))$ such that $x\mapsto q\mu_x$ and $y\mapsto \mu_xq$. 
We denote the image of each $c\in k[x,y]$ by $q_c$. Thus, if $c(x,y)=\sum_{j=0}^ma_j(y)x^j\in k[x,y]$, with $a_j(y)\in k[y]$, and $z\in k(x)$, we have 
$$q_c(z)=\sum_{j=0}^ma_j(x)q(x^jz)=\left[\sum_ja_j(x)qx^j\right](z).$$
Given $c_1,c_2\in k[x,y]$, we have 
$q_{c_1c_2}=q_{c_1}q_{c_2}$. 
 Notice that, if $c,c_1\in k[x,y]$ satisfy $c_1(x,y)=c(x,y)s(x)$, for some non-zero $s(x)\in k[x]$, then the linear endomorphism  $q':=q\mu_{s(x)}^{-1}\in \End_k(k(x))$, is such that   $q'_{c_1}=q_c$. 
\end{notation}

\begin{lemma}[{\bf Convolution Lemma}]\label{L: convolution map sobre} If $0\not=c(x,y)\in k[x,y]$,  the map 
$$\End_k(k(x))\rightmap{}\End_k(k(x))\hbox{ such that }q\mapsto q_c$$ is surjective. 
\end{lemma}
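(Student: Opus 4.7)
My plan is to realize $\End_k(k(x))$ as an injective module over the commutative integral domain $R := k(x)\otimes_k k(x)$ and then invoke the general fact that multiplication by a nonzero element of a domain is surjective on any injective module.

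The ring $R$ is the localization of $k[x,y]$ at the multiplicative set $\{f(x)g(y) : 0\neq f\in k[x],\, 0\neq g\in k[y]\}$; in particular, $R$ is an integral domain and the natural ring map $\iota: k[x,y]\hookrightarrow R$ defined by $\iota(y) = x\otimes 1$ and $\iota(x) = 1\otimes x$ is injective. I define an $R$-module structure on $\End_k(k(x))$ by $(a\otimes b)\cdot q := \mu_a\circ q\circ\mu_b$ on pure tensors; this is well-defined because left and right multiplications by elements of the commutative ring $k(x)$ commute in $\End_k(k(x))$. Unwinding the formula for $q_c$ in (\ref{N: def de qsubc}) shows $\iota(c)\cdot q = q_c$ for every $c\in k[x,y]$ and $q\in\End_k(k(x))$.

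The main step is to show $\End_k(k(x))$ is injective as an $R$-module. For this, observe that the inclusion of the left tensor factor, $k(x)\hookrightarrow R$, realizes $R$ as a free left $k(x)$-module (any $k$-basis of the right factor is a $k(x)$-basis). Hence the restriction functor $R\g\Mod\to k(x)\g\Mod$ is exact, and its right adjoint, the coinduction functor $N\mapsto \Hom_{k(x)}(R,N)$, preserves injectivity. Since $k(x)$ is a field, $k(x)$ is an injective $k(x)$-module, so $\Hom_{k(x)}(R, k(x))$ is an injective $R$-module. The tensor-hom adjunction produces a natural $R$-equivariant isomorphism
\[
\Hom_{k(x)}(R, k(x)) \;=\; \Hom_{k(x)}(k(x)\otimes_k k(x),\, k(x)) \;\cong\; \Hom_k(k(x), k(x)) \;=\; \End_k(k(x)),
\]
given by $f\mapsto q_f$ with $q_f(z) := f(1\otimes z)$, under which the right $R$-action on the left side transports to the bimodule action $(a\otimes b)\cdot q = \mu_a q \mu_b$ on the right.

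With injectivity in hand, the lemma follows from a formal argument: for any nonzero $\xi\in R$, multiplication by $\xi$ is injective on $R$ (since $R$ is a domain), so applying $\Hom_R(-,\End_k(k(x)))$ to the short exact sequence $0\to R\xrightarrow{\xi} R\to R/\xi R\to 0$ and using $\Ext^1_R(R/\xi R,\End_k(k(x)))=0$ gives that $\End_k(k(x))\xrightarrow{\xi\cdot}\End_k(k(x))$ is surjective. Applied to $\xi := \iota(c)\neq 0$, this yields the surjectivity of $q\mapsto q_c$. The main technical obstacle will be verifying that the tensor-hom isomorphism of the previous paragraph is $R$-equivariant with respect to the described actions; everything else is routine module theory.
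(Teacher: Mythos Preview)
Your proof is correct and takes a genuinely different route from the paper's. The paper argues by explicit construction: writing $c(x,y)=\sum_j a_j(y)x^j$, it builds a preimage $q$ of a given $p$ by recursively prescribing the values $q(x^i)$ and $q(1/(x-\lambda)^s)$ on the partial-fraction $k$-basis of $k(x)$, first under the auxiliary hypothesis that $c(\lambda,y)\neq 0$ for all $\lambda$, and then reducing the general case to this one by factoring out a product of linear factors in $x$. This uses the algebraic closedness of $k$ in an essential way.

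Your argument replaces all of that with a single structural observation: $\End_k(k(x))\cong \Hom_{k(x)}(R,k(x))$ is a coinduced, hence injective, module over the domain $R=k(x)\otimes_k k(x)$, and the map $q\mapsto q_c$ is precisely multiplication by the nonzero element $\iota(c)$ on this injective module. The $R$-equivariance check you flag is straightforward (and you essentially carry it out); after that, surjectivity is the standard divisibility property of injective modules over domains. Your approach is shorter, works over any base field $k$, and isolates a reusable fact (injectivity of $\End_k(k(x))$ over $R$). The paper's approach, in exchange, is fully elementary and constructive, requiring no homological input. One small remark: restriction of scalars is always exact, so the freeness of $R$ over $k(x)$ is not needed for coinduction to preserve injectives, though it does no harm.
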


\begin{proof} Assume that $c(x,y)=\sum_{j=0}^ma_j(y)x^j$, with $a_m(y)\not=0$, and take a linear map $p\in \End_k(k(x))$. Let us consider first the following:
\medskip

\noindent\emph{Special Case: There is no $\lambda\in k$ such that 
$c(\lambda,y)=\sum_{j=0}^ma_j(y)\lambda^j=0.$}
\medskip

In order to define the appropriate linear map $q:k(x)\rightmap{}k(x)$, we have to define its values on the $k$-basis of $k(x)$ formed by the elements $x^i$ and $1/(x-\lambda)^j$, for $i\geq 0$, $j\geq 1$, and $\lambda\in k$. We are looking for a linear map $q$ such that the equations 
\begin{equation}\tag{$Eq(z)$}
 p(z)=\sum_{j=0}^ma_j(x)q(x^jz)
\end{equation}
 are satisfied for each one of these basis elements $z$.

Having in mind the basis element $z=1$, we first define the images $q(1)$, $q(x)$,...,$q(x^m)$ such that the equation $Eq(1)$ is satisfied:   
$$a_0(x)q(1)+a_1(x)q(x)+\cdots+a_m(x)q(x^m)=p(1).$$
For this, we first notice that $c(x,y)\not=0$ implies that $c(x^n,x)\not=0$, for $n\in \hueca{N}$ big enough. Then, we take $q(1):=c(x^n,x)^{-1}p(1)$ and, $q(x^j):=x^{nj}q(1)$, for $j\in [1,m]$. Equation $Eq(1)$ holds because  we have 
$$\begin{matrix}
\sum_{j=0}^ma_j(x)q(x^j)&=&a_0(x)c(x^n,x)^{-1} p(1)+\sum_{j=1}^ma_j(x)x^{nj}c(x^n,x)^{-1}p(1)\hfill\\
&=&
(\sum_{j=0}^ma_j(x)x^{nj})c(x^n,x)^{-1}p(1)=p(1).\hfill\\
\end{matrix}$$

Then, having in mind the basis element $z=x$,  we define 
$$q(x^{m+1}):=a_m(x)^{-1}[p(x)-a_0(x)q(x)-a_1(x)q(x^2)-\cdots -a_{m-1}(x)q(x^m)],$$
so that the equation $Eq(x)$ is satisfied. Similarly, in order to comply with the equations $Eq(x^i)$, we define inductively 
$$q(x^{m+i}):=a_m(x)^{-1}[p(x^i)-a_0(x)q(x^i)-a_1(x)q(x^{i+1})-\cdots -a_{m-1}(x)q(x^{m-1+i})].$$
Next, we want to define the value $q(1/(x-\lambda))$ in order to comply with equation $Eq(1/(x-\lambda))$, that is with 
$$p(1/(x-\lambda))=\sum_{j=0}^ma_j(x)q(x^j/(x-\lambda)).$$
From (\ref{L: cocientes combin lin de basicos en k(x)}), we know that $x^j/(x-\lambda)=h_j(x)+\lambda^j/(x-\lambda)$, for some $h_j(x)\in k[x]$, for each $j\in [1,m]$. So, the preceding equation can be written as 
$$\begin{matrix}
p(1/(x-\lambda))
&=&
\sum_{j=0}^ma_j(x)q[h_j(x)+\lambda^j/(x-\lambda)]\hfill\\
&=&
\sum_{j=0}^ma_j(x)q(h_j(x)) +\sum_{j=0}^ma_j(x)\lambda^jq(1/(x-\lambda))\hfill\\
&=&
\sum_{j=0}^ma_j(x)q(h_j(x)) +c(\lambda,x)q(1/(x-\lambda)).\hfill\\
\end{matrix}
$$
So, if we define 
$$q(1/(x-\lambda)):=c(\lambda,x)^{-1}\left[p(1/(x-\lambda))-\sum_{j=0}^ma_j(x)q(h_j(x))\right],$$
the equation $Eq(1/(x-\lambda))$ is satisfied. Now, assume that $s\geq 1$ and that $q(1/(x-\lambda)^i)$ has been defined for $i\in [1,s]$ such that the corresponding equations $Eq(1/(x-\lambda)^i)$ hold. As in the preceding case, we 
want to define the value $q(1/(x-\lambda)^{s+1})$ in order to comply with equation $Eq(1/(x-\lambda)^{s+1})$, that is with 
$$p(1/(x-\lambda)^{s+1})=\sum_{j=0}^ma_j(x)q(x^j/(x-\lambda)^{s+1}).$$
From (\ref{L: cocientes combin lin de basicos en k(x)}), we know that $x^j/(x-\lambda)^{s+1}=h_j(x)+u_j(x)+\lambda^j/(x-\lambda)^{s+1}$, for some $h_j(x)\in k[x]$ and $u_j(x)\in k(x)$, where $u_j(x)$ is a $k$-linear combination of the rational functions $\{1/(x-\lambda)^i\mid 0<i\leq s\}$, for each $j\in [1,m]$. So, the preceding equation can be written as 
$$\begin{matrix}
p(1/(x-\lambda)^{s+1})
&=&
\sum_{j=0}^ma_j(x)q[h_j(x)+u_j(x)+\lambda^j/(x-\lambda)^{s+1}]\hfill\\
&=&
\sum_{j=0}^ma_j(x)q[h_j(x)+u_j(x)] +c(\lambda,x)q(1/(x-\lambda)^{s+1}).\hfill\\
\end{matrix}
$$
So, if we define 
$$q(1/(x-\lambda)^{s+1}):=c(\lambda,x)^{-1}\left[p(1/(x-\lambda)^{s+1})-\sum_{j=0}^ma_j(x)q[h_j(x)+u_j(x)]\right],$$
the equation $Eq(1/(x-\lambda)^{s+1})$ is satisfied. Thus there is $q\in \End_k(k(x))$ such that $q_c=p$. 
 \medskip

\noindent\emph{General Case:} 
\medskip

By assumption, $0\not=c(x,y)\in k[x,y]$. We claim that there are $\lambda_1,\ldots,\lambda_t\in k$ and a polynomial $c_t(x,y)\in k[x,y]$ such that 
$$c(x,y)=c_t(x,y)(x-\lambda_1)\cdots(x-\lambda_t),$$
and $c_t(\lambda,y)\not=0$, for all $\lambda\in k$. 

Indeed, if $c(\lambda_1,y)=0$, for some $\lambda_1\in k$, the scalar $\lambda_1$ is a root of $c(x,y)\in k[y][x]$, so $c(x,y)=c_1(x,y)(x-\lambda_1)$. If $c_1(\lambda_2,y)=0$, for some $\lambda_2\in k$, we obtain $c(x,y)=c_2(x,y)(x-\lambda_1)(x-\lambda_2)$, with $c_2(x,y)\in k[x,y]$. This process can only be repeated a finite number of times because the degree of the polynomial $c(x,y)\in k[y][x]$ is finite. Our claim follows from this.

Thus, 
  $c(x,y)=c_t(x,y)s(x)$, with $s(x)=(x-\lambda_1)\cdots(x-\lambda_t)$, for some $\lambda_1,\ldots,\lambda_t\in k$, and $c_t(x,y)\in k[x,y]$ such that $c_t(\lambda,y)\not=0$ for all $\lambda\in k$. Thus, given $p\in \End_k(k(x))$, there is some $q\in \End_k(k(x))$ with $q_{c_t}=p$, by the preceding special case. From the remark at the end of (\ref{N: def de qsubc}), we know that $q':=q\mu^{-1}_{s(x)}\in \End_k(k(x))$ is such that $q'_c=q_{c_t}=p$. 
\end{proof}

\begin{remark}\label{L: generadores de End_calB(k(x)f_0)}
\begin{enumerate}
\item For $q\in \End_k(k(x))$ and $v\in \hueca{B}_{\frak{f}_0,\frak{f}_0}$, we have the morphism  
$$f^x_{v,q}=(0,\hat{\psi}^x_{v,q})\in \rad_{\cal B}(k(x)\frak{f}_0,k(x)\frak{f}_0),$$
where $\psi^x_{v,q}\in \Hom_{S\g S}(\hat{W_1},\Hom_k(k(x)\frak{f}_0,k(x)\frak{f}_0))$ satisfies $\psi^x_{v,q}(w)[z\frak{f}_0]=\hat{v}(w)q(z)\frak{f}_0$, for $w\in \hat{W}_1$ and $z\in k(x)$. 
The set 
$$\{ f_{v,q}^x\mid q\in \End_k(k(x))\hbox{ and } v\in \hueca{B}_{\frak{f}_0,\frak{f}_0}\}$$
generates the $k$-vector space $\rad_{\cal B}(k(x)\frak{f}_0,k(x)\frak{f}_0)$.
\item For $q\in \End_k(k(x))$ and $v\in \hueca{B}_{\frak{f}_0,\frak{f}}$, with $\frak{f}\not=\frak{f}_0$, we have the morphism  
$$f^x_{v,q}=(0,\hat{\psi}^x_{v,q})\in \rad_{\cal B}(S^x_\frak{f},k(x)\frak{f}_0),$$
where $\psi^x_{v,q}\in \Hom_{S\g S}(\hat{W_1},\Hom_k(S^x_\frak{f},k(x)\frak{f}_0))$ satisfies $\psi^x_{v,q}(w)[\hat{\frak{f}}z]=\hat{v}(w)q(z)\frak{f}_0$, for $w\in \hat{W}_1$ and $z\in k(x)$.  
The set 
$$\{ f_{v,q}^x\mid q\in \End_k(k(x))\hbox{ and } v\in \hueca{B}_{\frak{f}_0,\frak{f}}\}$$
generates the $k$-vector space $\rad_{\cal B}(S^x_\frak{f},k(x)\frak{f}_0)$.
\end{enumerate}
\medskip

Indeed, consider the isomorphism $\End_k(k(x))\rightmap{}\End_k(k(x)\frak{f}_0)$ mapping each linear map $q$ onto $\hat{q}$ defined by $\hat{q}(z\frak{f}_0)=q(z)\frak{f}_0$, for $z\in k(x)$.

Notice that $f_{v,q}^x=f_{v,\hat{q}}$, as defined in (\ref{L: radEnd(k(x)f0}). So, we have $(1)$. For $(2)$, we have $f^x_{v,q}=f_{v,\hat{q}}$ as defined in (\ref{L: decompos of HomB(gen,sumadesimples})(2), for the $k$-vector space $V=k(x)$, where $\hat{q}(\hat{\frak{f}}z)=q(z)\frak{f}_0$.

The next lemma involves, for $v_1\in \hueca{B}_{\frak{f},\frak{f}_0}$, where $\frak{f}$ is a non-marked idempotent, the morphisms 
$f^x_{v_1}\in \Hom_{{\cal B}\g k(x)}(k(x)\frak{f}_0,S^x_\frak{f})\subseteq \Hom_{{\cal B}}(k(x)\frak{f}_0,S^x_\frak{f})$ introduced in (\ref{L: k(x)-bases para rad entre bimodulos simples}). It also  involves, for $v_2\in \hueca{B}_{\frak{f}',\frak{f}}$, where $\frak{f}$ and $\frak{f}'$ are non-marked idempotents, the morphisms 
$f^x_{v_2}\in \Hom_{{\cal B}\g k(x)}(S^x_\frak{f},S^x_{\frak{f}'})\subseteq \Hom_{{\cal B}}(S^x_\frak{f},S^x_{\frak{f}'})$ introduced in that same lemma. 
Moreover, it also refers to the  morphisms  mentioned (1) and (2). 
\end{remark}

\begin{lemma}\label{L: compos con basicos y fx(v2,v1,q)} Given $q\in \End_k(k(x))$, composing in ${\cal B}\g\Mod$, the following holds:
\begin{enumerate}
\item For $v_1\in \hueca{B}_{\frak{f},\frak{f}_0}$ and $v_2\in \hueca{B}_{\frak{f}_0,\frak{f}}$, where $\frak{f}\not=\frak{f}_0$, we have 
$$f^x_{v_2,q}f^x_{v_1}=(0,\psi^x_{v_2,v_1,q}), \hbox{ with } \psi^x_{v_2,v_1,q}(v)=\hat{q}_{c^v_{v_2,v_1}}, \hbox{ for } v\in \hueca{B}_{\frak{f}_0,\frak{f}_0}.$$
\item For $v_1\in \hueca{B}_{\frak{f},\frak{f}_0}$, $v_2\in \hueca{B}_{\frak{f}',\frak{f}}$,  and $v_3\in \hueca{B}_{\frak{f}_0,\frak{f}'}$, with $\frak{f}\not=\frak{f}_0\not=\frak{f}'$, we have 
$$f^x_{v_3,q}f^x_{v_2}f^x_{v_1}=(0,\psi^x_{v_3,v_2,v_1,q}), \hbox{ with } \psi^x_{v_3,v_2,v_1,q}(v)=\hat{q}_{c^v_{v_3,v_2,v_1}},\hbox{ for } v\in \hueca{B}_{\frak{f}_0,\frak{f}_0}$$
\end{enumerate}
\end{lemma}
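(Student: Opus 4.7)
The plan is to mimic the proof of Lemma \ref{L: de las lambda-compos} verbatim, replacing scalar specialization at $\lambda\in k$ by the convolution operation $q\mapsto q_c$ of Notation \ref{N: def de qsubc}. By Remark \ref{R: compos de segundas comp en B-Mod}, for part (1) we have $\psi^x_{v_2,v_1,q}(v)=\pi(\psi^x_{v_2,q}\otimes \psi^x_{v_1})(\delta(v))$, and for part (2) the analogous expression $\psi^x_{v_3,v_2,v_1,q}(v)=\pi(\psi^x_{v_3,q}\otimes\psi^x_{v_2}\otimes\psi^x_{v_1})((1\otimes\delta)\delta(v))$. The task therefore reduces to evaluating $\pi(\cdots)$ on the tensors appearing in the decompositions of $\delta(v)$ and $(1\otimes\delta)\delta(v)$ recorded in Remark \ref{R: descriptions of delta and}.

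First I would record the bookkeeping observation that $\psi^x_{v_1}$ vanishes on $\frak{f}_0 W_1\frak{f}_0$. Indeed, as an $R$-$R$-bilinear map landing in $\Hom_{k(x)}(k(x)\frak{f}_0,S^x_{\frak{f}})$, we have $\psi^x_{v_1}(w)=\frak{f}\,\psi^x_{v_1}(w)\,\frak{f}_0=\psi^x_{v_1}(\frak{f}w\frak{f}_0)$; the hypothesis $\frak{f}\ne\frak{f}_0$ forces this to vanish on $\frak{f}_0 W_1\frak{f}_0$. Symmetric statements hold for $\psi^x_{v_3,q}$ and, in part (2), for $\psi^x_{v_2}$. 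This eliminates at once the residual terms $r(v)\in H$ and $r'(v)\in H'$ from the computation, since in each summand of $H$ or $H'$ at least one tensor slot sits inside $\frak{f}_0 W_1\frak{f}_0$ and is killed by one of the three maps.

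The central step is the convolution identity, valid for any polynomial $c(x,y)=\sum_{i=0}^m a_i(y)x^i\in k[x,y]$ and any $w_1\in\hueca{B}^a$, $w_2\in\hueca{B}^b$:
$$\pi(\psi^x_{v_2,q}\otimes\psi^x_{v_1})\bigl[c(x,y)(w_2\otimes w_1)\bigr](z\frak{f}_0)=\hat{v}_2(w_2)\hat{v}_1(w_1)\,q_c(z)\,\frak{f}_0.$$
Since $c(x,y)(w_2\otimes w_1)=\sum_i a_i(x)w_2\otimes w_1 x^i$, right $R$-linearity combined with the $k(x)$-linearity of $\psi^x_{v_1}(w_1)$ gives $\psi^x_{v_1}(w_1 x^i)(z\frak{f}_0)=\hat{v}_1(w_1)\,x^iz\,\hat{\frak{f}}$; then left $R$-linearity and the defining formula for $\psi^x_{v_2,q}$ give $\psi^x_{v_2,q}(a_i(x)w_2)(x^iz\hat{\frak{f}})=a_i(x)\hat{v}_2(w_2)\,q(x^iz)\,\frak{f}_0$; summing over $i$ produces $\hat{v}_2(w_2)\hat{v}_1(w_1)\sum_i a_i(x)q(x^iz)\frak{f}_0$, which equals $\hat{v}_2(w_2)\hat{v}_1(w_1)\,q_c(z)\,\frak{f}_0$ by the very definition of $q_c$. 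Plugging this into the expansion of $\delta(v)$ from Remark \ref{R: descriptions of delta and}, the residual $r(v)$ contributes zero by the previous paragraph, and only the term with $(w_2,w_1)=(v_2,v_1)$ survives in the main sum, yielding $\psi^x_{v_2,v_1,q}(v)(z\frak{f}_0)=q_{c^v_{v_2,v_1}}(z)\frak{f}_0=\hat{q}_{c^v_{v_2,v_1}}(z\frak{f}_0)$, as asserted.

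Part (2) follows by the same scheme, with the triple-tensor convolution identity
$$\pi(\psi^x_{v_3,q}\otimes\psi^x_{v_2}\otimes\psi^x_{v_1})\bigl[c(x,y)(w_3\otimes w_2\otimes w_1)\bigr](z\frak{f}_0)=\hat{v}_3(w_3)\hat{v}_2(w_2)\hat{v}_1(w_1)\,q_c(z)\,\frak{f}_0,$$
proved by the same manipulation, and inserted into the expansion of $(1\otimes\delta)\delta(v)$; the residual $r'(v)\in H'$ vanishes as explained above. I do not anticipate any genuine obstacle: the argument is a careful but routine tracking of left and right $R$-actions through tensor factors, formally parallel to Lemma \ref{L: de las lambda-compos}, with scalar multiplication by $\lambda^i$ replaced throughout by the linear operator $q\circ\mu_{x^i}$, whose composite with the $a_i(x)$-convolution delivers $q_c$.
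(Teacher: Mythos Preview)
Your proposal is correct and follows essentially the same approach as the paper's proof: both use the composition formula from Remark \ref{R: compos de segundas comp en B-Mod}, expand $\delta(v)$ and $(1\otimes\delta)\delta(v)$ via Remark \ref{R: descriptions of delta and}, discard the residual terms $r(v)$ and $r'(v)$, observe that only the summand with $(w_2,w_1)=(v_2,v_1)$ (respectively $(w_3,w_2,w_1)=(v_3,v_2,v_1)$) survives, and then carry out the explicit coefficient computation that identifies the result with $\hat{q}_{c^v_{\cdots}}$. The only cosmetic difference is that you package the key computation as a general convolution identity before specializing, whereas the paper first specializes to the surviving tensor and then computes.
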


\begin{proof} (1): We proceed as in (\ref{L: de las compos de x-basicos}). 
 By definition, we have the morphisms 
$\psi^x_{v_1}\in \Hom_{R\g R}(W_1,\Hom_k(k(x)\frak{f}_0,S^x_\frak{f}))$  and  $\psi^x_{v_2,q}\in \Hom_{R\g R}(W_1,\Hom_k(S^x_\frak{f},k(x)\frak{f}_0)).$
As remarked in (\ref{R: compos de segundas comp en B-Mod}), with the description of $\delta(v)$ given  in (\ref{R: descriptions of delta and}), we have 
$$\begin{matrix} 
\psi^x_{v_2,v_1,q}(v)&=&\pi (\psi^x_{v_2,q}\otimes\psi^x_{v_1})(\delta(v))\hfill\\
&=&
\sum_{w_1\in \hueca{B}^a,w_2\in \hueca{B}^b}\pi (\psi^x_{v_2,q}\otimes\psi^x_{v_1})(c^v_{w_2,w_1}(x,y)(w_2\otimes w_1))\hfill\\
&&+\,\pi (\psi^x_{v_2,q}\otimes\psi^x_{v_1})(r(v)).\hfill\\
\end{matrix}$$
Since $r(v)\in H$, its expression as a combination of tensor products involving  basis  elements in $\hueca{B}$ does not contain the elements $v_1$ and $v_2$. It follows that  $(\psi^x_{v_2,q}\otimes\psi^x_{v_1})(r(v))=0$. From the definition of $\psi^x_{v_2,q}$ and $\psi^x_{v_1}$, we know that the summand corresponding to the pair $(w_2, w_1)$ is zero, whenever $(w_2,w_1)\not=(v_2,v_1)$. So, we have 
$$\psi^x_{v_2,v_1,q}(v)=\pi (\psi^x_{v_2,q}\otimes\psi^x_{v_1})(c^v_{v_2,v_1}(x,y)(v_2\otimes v_1)). 
$$
Assuming that $c^v_{v_2,v_1}(x,y)=\sum_{j=0}^ma_j(y)x^j\in k[x,y]$, we have 
$$\begin{matrix}\psi^x_{v_2,v_1,q}(v)

&=&
\sum_{j=0}^m\psi^x_{v_2,q}(a_j(x)v_2)\psi^x_{v_1}(v_1x^j)\hfill\\
&=&
\sum_{j=0}^ma_j(x)\psi^x_{v_2,q}(v_2)\psi^x_{v_1}(v_1)x^j.\hfill\\
\end{matrix}$$
Then, evaluating at $z\frak{f}_0$, we have
$$\begin{matrix}
\psi^x_{v_2,v_1,q}(v)[z\frak{f}_0]
&=&
\sum_{j=0}^ma_j(x)\psi^x_{v_2,q}(v_2)[\psi^x_{v_1}(v_1)[x^jz\frak{f}_0]]\hfill\\
&=&
\sum_{j=0}^ma_j(x)\psi^x_{v_2,q}(v_2)[\hat{\frak{f}}x^jz]\hfill\\
&=&
\sum_{j=0}^ma_j(x)q(x^jz)\frak{f}_0
=\hat{q}_{c^v_{v_2,v_1}}(z\frak{f}_0).\hfill\\
\end{matrix}$$
\medskip

\noindent(2): As before, we have the maps  
$\psi^x_{v_1}\in \Hom_{R\g R}(W_1,\Hom_k(k(x)\frak{f}_0,S^x_\frak{f}))$,  
$\psi^x_{v_2}\in \Hom_{R\g R}(W_1,\Hom_k(S^x_\frak{f},S^x_{\frak{f}'})),$
and $\psi^x_{v_3,q}\in \Hom_{R\g R}(W_1,\Hom_k(S^x_{\frak{f}'},k(x)\frak{f}_0))$. 
As remarked in (\ref{R: compos de segundas comp en B-Mod}), with the description of $(1\otimes\delta)\delta(v)$ given  in (\ref{R: descriptions of delta and}), we have 

\vbox{$$
\psi^x_{v_3,v_2,v_1,q}(v)
=\pi (\psi^x_{v_3,q}\otimes\psi^x_{v_2}\otimes\psi^x_{v_1})((1\otimes\delta)\delta(v))=$$
$$
\sum_{w_3\in \hueca{B}^b,w_2\in \hueca{B}_{\frak{Z}},w_1\in \hueca{B}^a}\pi (\psi^x_{v_3,q}\otimes\psi^x_{v_2}\otimes\psi^x_{v_1})(c^v_{w_3,w_2,w_1}(x,y)(w_3\otimes w_2\otimes w_1))$$
$$+\,\,\pi (\psi^x_{v_3,q}\otimes\psi^x_{v_2}\otimes\psi^x_{v_1})(r'(v)).$$}
Here, $r'(v)\in H'$, so its expression as a combination of  tensor products involving basis elements in $\hueca{B}$ does not contain elements of the form $x^iv_3\otimes v_2\otimes v_1x^j$, so the last term in the preceding equality is zero.  By definition of the maps $\psi^x_{v_3,q}$, $\psi^x_{v_2}$, and $\psi^x_{v_1}$, whenever $(v_3,v_2,v_1)\not=(w_3,w_2,w_1)$, the corresponding summand is zero. So we get
$$
\psi^x_{v_3,v_2,v_1,q}(v)
=\pi (\psi^x_{v_3,q}\otimes\psi^x_{v_2}\otimes\psi^x_{v_1})(c^v_{v_3,v_2,v_1}(x,y)(v_3\otimes v_2\otimes v_1)).$$
Assume that $c_{v_3,v_2,v_1}^v(x,y)=\sum_{j=0}^ma_j(y)x^j$, then we have
$$\begin{matrix}
\psi^x_{v_3,v_2,v_1,q}(v)
&=&\sum_{j=0}^m
\psi^x_{v_3,q}(a_j(x)v_3)\psi^x_{v_2}(v_2)\psi^x_{v_1}(v_1x^j)\hfill\\
&=&
\sum_{j=0}^m
a_j(x)\psi^x_{v_3,q}(v_3)\psi^x_{v_2}(v_2)\psi^x_{v_1}(v_1)x^j.\hfill\\
\end{matrix}$$
Evaluating at $z\frak{f}_0\in k(x)\frak{f}_0$, we get 
$$
\begin{matrix}
\psi^x_{v_3,v_2,v_1,q}(v)[z\frak{f}_0]
&=&
\sum_{j=0}^ma_j(x)\psi^x_{v_3,q}(v_3)[\psi^x_{v_2}(v_2)[\psi_{v_1}^x(v_1)[x^jz\frak{f}_0]]]\hfill\\
&=&
\sum_{j=0}^ma_j(x)\psi_{v_3,q}(v_3)[\psi^x_{v_2}(v_2)[\hat{\frak{f}}x^jz]]\hfill\\
&=&
\sum_{j=0}^ma_j(x)\psi^x_{v_3,q}(v_3)[\hat{\frak{f}}'x^jz]\hfill\\
&=&
\sum_{j=0}^ma_j(x)q(x^jz)\frak{f}_0=\hat{q}_{c_{v_3,v_2,v_1}^v}(z\frak{f}_0).\hfill
\end{matrix}$$
\end{proof}

\section{Normalization and proof of the  main theorem}

For the last part of  the proof of (\ref{T: gen-bricks vs bricks}), we need to make some adjustment to the minimal ditalgebra ${\cal B}$ in order to gain some control on the behaviour of its differential. 
 
\begin{definition}\label{D: hat(delta) y admis} Given a minimal ditalgebra  ${\cal B}$ and a basis $\hueca{B}$ of $W_1$ as before, we consider the following $R\frak{f}_0\g R\frak{f}_0$-bimodule
$$U:=
\left[\bigoplus_{t=1}^n
\frak{f}_0W_1\frak{f}_{z_t}
\otimes_R\frak{f}_{z_t}W_1\frak{f}_0\right]
\oplus\left[\bigoplus_{t=1}^n
\frak{f}_0W_1\frak{f}_{r_t}\otimes_Rku_t\otimes_R\frak{f}_{l_t}W_1\frak{f}_0\right].$$
Consider the directed subset of $U$  
 $$\hueca{T}=\{v_2\otimes v_1\mid (v_2,v_1)\in \hueca{I}_2\} \cup \{v_2\otimes u_t\otimes v_1\mid (v_2,t,v_1) \in \hueca{I}_3\}.$$
 Thus, the set $\hueca{T}$ freely generates the $R\frak{f}_0\g R\frak{f}_0$-bimodule $U$. 
 Consider also the morphism of $R\frak{f}_0\g R\frak{f}_0$-bimodules $\hat{\delta}:\frak{f}_0W_1\frak{f}_0\rightmap{}U$ given as the composition
 $$\frak{f}_0W_1\frak{f}_0\rightmap{ \ \  (\delta,[1\otimes\delta]\delta)^t \ \ }(\frak{f}_0W_1\otimes_RW_1\frak{f}_0)\oplus 
 (\frak{f}_0W_1\otimes_RW_1\otimes_RW_1\frak{f}_0)
 \rightmap{\nu}U,$$
 where $\nu$ is the projection map. The  morphism $\hat{\delta}$ is completely determined by its values taken on the  basis  elements $v\in \hueca{B}_{\frak{f}_0,\frak{f}_0}$, which have the form 
 $$\hat{\delta}(v)=\sum_{\tau\in \hueca{T}}c_\tau^v(x,y)\tau,$$
 where $c_t^v(x,y)\in k[x,y]$, see (\ref{R: descriptions of delta and}).  
 For any numberings $v_1,\ldots,v_{c_0}$ of $\hueca{B}_{\frak{f}_0,\frak{f}_0}$ and $\tau_1,\ldots, \tau_{c_1}$ of $\hueca{T}$, we have the matrix
 $$C(x,y):=(c_{i,j}(x,y))\in M_{c_0\times c_1}(k[x,y]), \hbox{ with } c_{i,j}(x,y):=c^{v_i}_{\tau_j}(x,y),$$
 which we call \emph{the coefficient matrix of $\hat{\delta}$ associated to those numberings}. With this notation, we have 
 $$\hat{\delta}(v_i)=\sum_{j=1}^{c_1}c_{i,j}(x,y)\tau_j.$$
 We say that the matrix \emph{$C(x,y)$ is normal} iff $c_{i,j}=0$ whenever $i>j$ and $c_{i,i}\not=0$, for all $i,j\in [1,c_0]$.  
 We will say that ${\cal B}$ is \emph{admissible} if there is a basis $\hueca{B}$ of $W_1$ and such numberings of $\hueca{B}_{\frak{f}_0,\frak{f}_0}$ and $\hueca{T}$ such that the associated matrix $C(x,y)$ is  normal.   
\end{definition}

\begin{proposition}\label{P: C normal --> rad(k(x)f0) se factorizan bien} Assume that ${\cal B}$ is a minimal admissible ditalgebra,  then every morphism in 
$\rad \End_{\cal B}(k(x)\frak{f}_0)$ is a $k$-linear combination of morphisms that  factor through some object $S^x_{\frak{f}_{z_t}}$ or  some morphism $f^x_{u_t}$, for some $t\in [1,n]$. 
\end{proposition}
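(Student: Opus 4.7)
The plan is to reduce the statement to a solvable linear system over $\End_k(k(x))$, using admissibility to guarantee an upper-triangular structure that can be handled by backward substitution together with the Convolution Lemma.

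First, by part (1) of (\ref{L: generadores de End_calB(k(x)f_0)}), it suffices to prove that every generator $f^x_{v_i,q}$, with $v_i\in \hueca{B}_{\frak{f}_0,\frak{f}_0}$ and $q\in \End_k(k(x))$, can be written as a $k$-linear combination of morphisms factoring through some $S^x_{\frak{f}_{z_t}}$ or some $f^x_{u_t}$. Since scalars can be absorbed into the $\End_k(k(x))$-parameter of the outer factor, such an expression amounts to a finite sum of the form
\[
f^x_{v_i,q}\;=\;\sum_{(v_2,v_1)\in \hueca{I}_2} f^x_{v_2,q_{(v_2,v_1)}}f^x_{v_1}\;+\;\sum_{(v_2,t,v_1)\in \hueca{I}_3} f^x_{v_2,q_{(v_2,t,v_1)}}f^x_{u_t}f^x_{v_1},
\]
which we enumerate as $\sum_{j=1}^{c_1} g_{\tau_j,q_j}$ indexed by the directed set $\hueca{T}=\{\tau_1,\dots,\tau_{c_1}\}$ of the admissibility definition.

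Next, because the first component of every morphism involved is zero, the identity above is determined by evaluating the second component on each basis vector $v_\ell\in \hueca{B}_{\frak{f}_0,\frak{f}_0}$. Applying the composition formulas of (\ref{L: compos con basicos y fx(v2,v1,q)}), the right-hand side evaluated at $v_\ell$ equals $\sum_j (\hat{q_j})_{c^{v_\ell}_{\tau_j}}=\sum_j(\hat{q_j})_{c_{\ell,j}}$, while the left-hand side evaluates to $\delta_{i,\ell}\,\hat{q}$. Using the isomorphism $\End_k(k(x))\cong \End_k(k(x)\frak{f}_0)$, the problem reduces to finding $q_1,\dots,q_{c_1}\in \End_k(k(x))$ satisfying
\[
\sum_{j=1}^{c_1}(q_j)_{c_{\ell,j}}\;=\;\delta_{i,\ell}\,q,\qquad \ell=1,\dots,c_0.
\]

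Here admissibility is used in an essential way: choose the numberings so that the coefficient matrix $C(x,y)$ is normal, i.e.\ $c_{\ell,j}=0$ for $j<\ell$ and $c_{\ell,\ell}\neq 0$. Set $q_j:=0$ for $c_0<j\leq c_1$, and determine $q_{c_0},q_{c_0-1},\dots,q_1$ by backward recursion. At stage $\ell$, with $q_{\ell+1},\dots,q_{c_0}$ already fixed, the remaining equation reads
\[
(q_\ell)_{c_{\ell,\ell}}\;=\;\delta_{i,\ell}\,q\;-\;\sum_{j>\ell}(q_j)_{c_{\ell,j}}\;=:\;p_\ell\in\End_k(k(x)),
\]
and since $c_{\ell,\ell}\neq 0$, the Convolution Lemma (\ref{L: convolution map sobre}) produces some $q_\ell\in\End_k(k(x))$ with $(q_\ell)_{c_{\ell,\ell}}=p_\ell$. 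Iterating until $\ell=1$ yields the required factorization.

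The core obstacle is this inductive back-substitution: it depends crucially on both the surjectivity guaranteed by (\ref{L: convolution map sobre}) and the upper-triangular shape of $C$ with nonzero diagonal, which is precisely what admissibility provides. Without normality, lower-triangular obstructions could render the system insoluble, so the work of the next section (normalizing ${\cal B}$ to ensure admissibility) is what makes this step legitimate.
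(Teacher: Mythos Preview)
Your proof is correct and follows essentially the same strategy as the paper: reduce to the generators $f^x_{v_i,q}$ via (\ref{L: generadores de End_calB(k(x)f_0)}), use (\ref{L: compos con basicos y fx(v2,v1,q)}) to translate the factorization problem into a linear system in $\End_k(k(x))$, and then exploit the normal (upper-triangular with nonzero diagonal) shape of $C(x,y)$ together with the Convolution Lemma (\ref{L: convolution map sobre}) to solve it. The only organizational difference is that the paper argues by forward induction on $i$ --- at step $i$ it chooses a single composition indexed by $\tau_i$, obtains $f^x_{v_i,q}$ plus a residue supported on $v_j$ with $j<i$, and invokes the induction hypothesis --- whereas you fix the target $(i,q)$ and run a backward substitution over $\ell=c_0,\dots,1$; unrolling the paper's induction yields exactly your substitution, so the two arguments are equivalent.
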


\begin{proof} From (\ref{L: generadores de End_calB(k(x)f_0)})(1), it will be enough to show that every morphism $f^x_{v,q}$, where $v\in \hueca{B}_{\frak{f}_0,\frak{f}_0}$ and $q\in \End_k(k(x))$, is a $k$-linear combination of morphisms which admit such factorizations. 

We adopt the notation of (\ref{D: hat(delta) y admis}) and proceed by induction over $i\in [1,c_0]$ to show that, for any $q\in \End_k(k(x))$, the morphism $f^x_{v_i,q}$ has the wanted form.  So, we consider the base of the induction case $i=1$ and take $q\in \End_k(k(x))$.  

There are two possibilities for the form of $\tau_1$. The first one is $\tau_1=w_2\otimes w_1$, with $(w_2,w_1)\in \hueca{I}_2$. So, $w_1\in \hueca{B}_{\frak{f}_{z_t},\frak{f}_0}$, $w_2\in \hueca{B}_{\frak{f}_0,\frak{f}_{z_t}}$, for some $t\in [1,n]$. Since $C(x,y)$ is normal, we get that $c^{v_1}_{w_2,w_1}(x,y)=c^{v_1}_{\tau_1}(x,y)=c_{1,1}(x,y)\not=0$. From the convolution lemma, we know that there is some $p\in \End_k(k(x))$ such that $q=p_{c^{v_1}_{w_2,w_1}}$. According to (\ref{L: compos con basicos y fx(v2,v1,q)})(1), the composition of the morphisms 
$f^x_{w_1}\in \Hom_{\cal B}(k(x)\frak{f}_0,S^x_{\frak{f}_{z_t}})$ and $f^x_{w_2,p}\in \Hom_{\cal B}(S^x_{\frak{f}_{z_t}},k(x)\frak{f}_0)$ is 
$$f^x_{w_2,p}f^x_{w_1}=(0,\psi^x_{w_2,w_1,p}).$$ 
Here, we have $\psi^x_{w_2,w_1,p}(v_1)=\hat{p}_{c^{v_1}_{w_2,w_1}}=\hat{q}$.  For $i>1$, we have  $\psi^x_{w_2,w_1,p}(v_i)=\hat{p}_{c^{v_i}_{w_2,w_1}}=0$, because, by the normality of $C(x,y)$, we have $c^{v_i}_{w_2,w_1}=c^{v_i}_{\tau_1}=c_{i,1}=0$. Hence, $\psi^x_{w_2,w_1,p}=\psi^x_{v_1,q}$ and, therefore, $f^x_{w_2,p}f^x_{w_1}=f^x_{v_1,q}$. That is $f^x_{v_1,q}$ factors through $S^x_{\frak{f}_{z_t}}$. 

The other possibility for the form of $\tau_1$ is  $\tau_1=w_2\otimes u_t\otimes w_1$, with $(w_2,t,w_1)\in \hueca{I}_3$. As before, there is  $p\in \End_k(k(x))$ such that $q=p_{c^{v_1}_{w_2,u_t,w_1}}$. Using (\ref{L: compos con basicos y fx(v2,v1,q)})(2), we obtain
$$f^x_{w_2,p}f^x_{u_t}f^x_{w_1}=f^x_{v_1,q}.$$
Thus, the morphism $f^x_{v_1,q}$ factors through $f^x_{u_t}$. 

Now, take $i\in [2,c_0]$ and assume that $f^x_{v_j,q_j}$ has the wanted form, for every  $j\in [1,i-1]$ and $q_j\in \End_k(k(x))$. Take $q\in \End_k(k(x))$ and let us show that $f^x_{v_i,q}$ is a $k$-linear combination of morphisms which factor as wanted.  

Again, since $C(x,y)$ is normal, we have $c^{v_i}_{\tau_i}(x,y)=c_{i,i}(x,y)\not=0$ and there is some $p\in \End_k(k(x))$ with $q=p_{c_{\tau_i}^{v_i}}$. Now, assume that $\tau_i$ has the form $\tau_i=w_2\otimes u_t\otimes w_1$, for some $(w_2,t,w_1)\in \hueca{I}_3$. Thus, $q=p_{c_{w_2,t,w_1}^{v_i}}$ and, from (\ref{L: compos con basicos y fx(v2,v1,q)})(2), we have the composition
$$f^x_{w_2,p}f^x_{u_t}f^x_{w_1}=(0,\psi^x_{w_2,u_t,w_1,p}).$$  
Here, we have $\psi^x_{w_2,u_t,w_1,p}(v_i)=\hat{p}_{c^{v_i}_{w_2,t,w_1}}=\hat{q}$. 
For $s>i$, we have $\psi^x_{w_2,u_t,w_1,p}(v_s)=0$, because, by the normality of $C(x,y)$, we have $c_{w_2,u_t,w_1}^{v_s}=c^{v_s}_{\tau_i}=c_{s,i}=0$. Therefore, we have 
$$\psi^x_{w_2,u_t,w_1,p}=\psi^x_{v_i,q}+\psi^x,$$
 where $\psi^x\in \Hom_{S\g S}(\hat{W}_1,\Hom_k(k(x)\frak{f}_0,k(x)\frak{f}_0))$ is such that  $\psi^x(v_s)=0$, for all $s\geq i$. This implies that $\psi^x=\sum_{j=1}^{i-1}\psi^x_{v_j,q_j}$, where $q_j\in \End_k(k(x))$ is such that $\psi^x(v_j)=\hat{q}_j$. Therefore, we have 
  $$f^x_{w_2,p}f^x_{u_t}f^x_{w_1}=f^x_{v_i,q}+\sum_{j=1}^{i-1}f^x_{v_j,q_j}.$$
  Then, we can apply the induction hypothesis to each $f^x_{v_j,q_j}$, to obtain that $f^x_{v_i,q}$ is a $k$-linear combination of morphisms as wanted. 
  
  In the remaining case, where $\tau_i=w_2\otimes w_1$, we proceed similarly. 
\end{proof}
 
 In the following, we will show that the  minimal ditalgebra ${\cal B}$ fixed at the begining of \S4 and satisfying (\ref{P: el funtor F}), can be replaced by a minimal ditalgebra of the same type but which is, in addition, admissible. 

\begin{remark}[\bf Reduction by localization]\label{R: localization}

Given a seminested ditalgebra ${\cal A}$, we have the seminested ditalgebra ${\cal A}^l$ obtained from ${\cal A}$ by localization.  It has an associated full and faithful reduction functor $F^l:{\cal A}^l\g\Mod\rightmap{}{\cal A}\g\Mod$, see \cite[(17.7) and (26.4)]{BSZ}. Here, we concentrate on the particular case of minimal ditalgebras with only one marked point, which is  the context of interest in the following. 

We adopt the notation of the beginning of \S4 for the constituents of our minimal ditalgebra ${\cal B}$ and consider a non-zero polynomial $g\in k[x]$. Then, the localization of ${\cal B}$ with respect to the element $g(x)$ is the minimal ditalgebra ${\cal B}^l$, with layer $(R^l,W_1^l)$ such that $R^l=k[x]_{g(x)h(x)}\frak{f}_0\times (\times_{z\in \frak{Z}}k\frak{f}_z)$ and $W_1^l=R^l\otimes_RW_1\otimes_RR^l$. The canonical epimorphism of algebras $R\rightmap{}R^l$ and the  morphism of $R$-$R$-bimodules $\phi:W_1\rightmap{}W^l_1$, which maps each $w$ onto $1\otimes w\otimes 1$, induce a morphism of ditalgebras $\hat{\phi}:{\cal B}\rightmap{}{\cal B}^l$, that is a morphism of graded algebras 
$\hat{\phi}:T_R(W_1)\rightmap{}T_{R^l}(W_1^l)$ such that $\delta^l\hat{\phi}=\hat{\phi}\delta$, where $\delta$ and $\delta^l$ denote the differentials of ${\cal B}$ and ${\cal B}^l$ respectively. 

We want to compare the morphisms $\hat{\delta}$ and $\hat{\delta}^l$, see (\ref{D: hat(delta) y admis}). Notice that, if we denote with the same symbols the differentials  $\delta$ and $\delta^l$, and their restrictions to $W_1$ and $W_1^l$, respectively, we have:
\begin{equation}\tag{1} \delta^l\phi=(\phi\otimes\phi)\delta
\end{equation}
and
\begin{equation}\tag{2} \begin{matrix}(1\otimes\delta^l)\delta^l\phi
&=&
(1\otimes\delta^l)(\phi\otimes\phi)\delta
=
(\phi\otimes\delta^l\phi)\delta\hfill\\
&=&
(\phi\otimes(\phi\otimes\phi)\delta)\delta=(\phi\otimes\phi\otimes\phi)(1\otimes\delta)\delta.\end{matrix}
\end{equation}
For the minimal ditalgebra ${\cal B}^l$, we have the $R^l\frak{f}_0\g R^l\frak{f}_0$-bimodule
$$U^l=
\left[\bigoplus_{t=1}^n
\frak{f}_0W^l_1\frak{f}_{z_t}
\otimes_{R^l}\frak{f}_{z_t}W^l_1\frak{f}_0\right]
\oplus\left[\bigoplus_{t=1}^n
\frak{f}_0W^l_1\frak{f}_{r_t}\otimes_{R^l}k(1\otimes u_t\otimes 1)\otimes_{R^l}\frak{f}_{l_t}W^l_1\frak{f}_0\right]
$$
and the corresponding morphism $\hat{\delta}^l:\frak{f}_0W^l_1\frak{f}_0\rightmap{}U^l$. From (1) and (2), we derive the equality  
$$\hat{\phi}\hat{\delta}=\hat{\delta}^l\phi.$$
 
For $v\in \hueca{B}_{\frak{f}_0,\frak{f}_0}$, we have $\hat{\delta}(v)=\sum_{\tau\in \hueca{T}}c_\tau^v\tau$. Therefore,  
$\hat{\delta}^l(\phi(v))=\sum_{\tau\in \hueca{T}}c^v_\tau\hat{\phi}(\tau)$. There is a natural choice of directed basis $\hueca{B}^l:=\phi(\hueca{B})$ for the $R^l\g R^l$-bimodule $W_1^l$, with this choice, we see that $\hueca{T}^l=\hat{\phi}(\hueca{T})$ is the chosen directed basis for $U^l$. Thus, the  coefficients defining $\hat{\delta}^l$ are the same that the coefficients defining $\hat{\delta}$.   

Any numberings $v_1,\ldots,v_{c_0}$ of $\hueca{B}_{\frak{f}_0,\frak{f}_0}$ and $\tau_1,\ldots,\tau_{c_1}$ of $\hueca{T}$, determines the numberings $\phi(v_1),\ldots,\phi(v_{c_0})$ of $\hueca{B}^l_{\frak{f}_0,\frak{f}_0}$ and $\hat{\phi}(\tau_1),\ldots,\hat{\phi}(\tau_{c_1})$ of $\hueca{T}^l$. With these numberings, we have the equality of coefficient matrices $C(x,y)=C^l(x,y)$.  
\end{remark}

\begin{lemma}\label{L: triangulacion de C}
If the coefficient matrix $C(x,y)$ of our minimal ditalgebra ${\cal B}$ has rank $c_0$, then there is a polynomial $g(x)\in k[x]$ such that the minimal ditalgebra ${\cal B}^l$ obtained by localization of ${\cal B}$ with respect to this polynomial $g(x)$ is admissible. 
\end{lemma}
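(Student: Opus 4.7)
The plan is to proceed by induction on $c_0$, constructing a basis $\hueca{B}$ of $W_1^l$ (for a suitable localization ${\cal B}^l$ of ${\cal B}$) together with orderings of $\hueca{B}_{\frak{f}_0,\frak{f}_0}$ and $\hueca{T}^l$ under which the associated coefficient matrix is normal. The polynomial $g(x)$ will be built up by successive enlargements to absorb all the polynomial denominators arising along the way. The base case $c_0=1$ is immediate from the rank hypothesis: some $c^{v_1}_{\tau_1}(x,y)\ne 0$, and placing it at $(1,1)$ yields the normal form. For the inductive step I first identify the allowed operations. A change of basis of $\hueca{B}_{\frak{f}_0,\frak{f}_0}$ amounts to a free-module basis change of $\frak{f}_0 W_1^l\frak{f}_0$ over $k[x,y]_{h(x)h(y)g(x)g(y)}$, so it acts on $C(x,y)$ as left multiplication by an arbitrary matrix in $\mathrm{GL}_{c_0}(k[x,y]_{h(x)h(y)g(x)g(y)})$. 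Changes of basis in the adjacent blocks $\hueca{B}_{\frak{f}_0,\frak{f}_z}$, $\hueca{B}_{\frak{f}_z,\frak{f}_0}$, $\hueca{B}_{\frak{f}_0,\frak{f}_{r_t}}$, $\hueca{B}_{\frak{f}_{l_t},\frak{f}_0}$ produce column operations on the $(v_2,v_1)$- and $(v_2,t,v_1)$-blocks of $\hueca{T}$ via Kronecker products $A(x)\otimes B(y)$ with $A\in\mathrm{GL}(k[x]_{h(x)g(x)})$ and $B\in\mathrm{GL}(k[y]_{h(y)g(y)})$.

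Using rank $c_0$ together with appropriate reorderings of rows and columns, I place a non-zero entry $c^{v_1}_{\tau_1}(x,y)$ at position $(1,1)$. To clear the remaining entries of the first column by row operations $v_i\mapsto v_i-(c^{v_i}_{\tau_1}/c^{v_1}_{\tau_1})v_1$, the ratios $c^{v_i}_{\tau_1}/c^{v_1}_{\tau_1}$ must lie in $k[x,y]_{h(x)h(y)g(x)g(y)}$; this is the case precisely when $c^{v_1}_{\tau_1}$ factorizes as $p(x)\tilde q(y)$ with both $p(x)$ and the polynomial $\tilde q(x)$ (obtained from $\tilde q$ by renaming $y$ to $x$) dividing $g(x)$, for then the pivot becomes a unit of the localization. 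Once the first column below the pivot has been cleared, the bottom-right $(c_0-1)\times(c_1-1)$ submatrix still has rank $c_0-1$ and the induction hypothesis applies; the final $g(x)$ is taken to be the product of all single-variable polynomials that appear as pivot factors along the induction.

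The main obstacle is to produce a pivot in the product form $p(x)\tilde q(y)$. A generic entry of $C(x,y)$ need not factor in this way (for example $x+y$ does not, and Kronecker-type column operations alone cannot reduce it to product form), so the argument must essentially combine the available Kronecker column operations with the rank-$c_0$ hypothesis. Concretely, restricting the pivot row to a single tensor-product block of $\hueca{T}$ yields a matrix in $M(k[x,y])$ on which one performs a Smith-type reduction separately via $A(x)\in\mathrm{GL}(k[x]_{h(x)g(x)})$ on the left and $B(y)\in\mathrm{GL}(k[y]_{h(y)g(y)})$ on the right; combined with row operations mixing the different $v\in\hueca{B}_{\frak{f}_0,\frak{f}_0}$, the rank-$c_0$ condition, via a Laplace-expansion argument supplying non-zero entries across multiple blocks, ensures that the procedure terminates in a product-form pivot after a finite number of steps, at the cost of enlarging $g(x)$ accordingly.
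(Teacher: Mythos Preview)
Your approach has a genuine gap at the crucial step. You correctly observe that the naive elimination $v_i\mapsto v_i-(c^{v_i}_{\tau_1}/c^{v_1}_{\tau_1})\,v_1$ requires the pivot $c^{v_1}_{\tau_1}(x,y)$ to become a unit after localization, hence to factor as $p(x)\tilde q(y)$. But the argument you offer for producing such a pivot is not a proof: the Kronecker-type column operations $A(x)\otimes B(y)$ within a single tensor block are far from arbitrary column operations over $k[x,y]$, so a ``Smith-type reduction'' on one row of one block need not yield product form, and the ``Laplace-expansion argument'' across blocks is only asserted, not carried out. Without a concrete mechanism here the inductive step does not close.

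The paper's proof sidesteps this entirely, and the point you are missing is that \emph{one does not need the pivot to be a unit}. The paper regards the entries of $C(x,y)$ as lying in the Euclidean domain $k(y)[x]$. After permuting columns so that $C=(C_0\mid C_1)$ with $\det C_0\neq 0$, one row-reduces $C_0$ to upper triangular form over $k(y)[x]$ via the Euclidean algorithm in $x$; this uses only elementary row operations, never division by a non-unit pivot. The resulting transition matrix $A$ lies in $\mathrm{GL}_{c_0}(k(y)[x])$, and a single polynomial $g(y)\in k[y]$ clears all $y$-denominators in $A$ and $A^{-1}$. Localizing ${\cal B}$ at $g(x)$ then makes $A$ an honest change of basis of the free $R^l\frak{f}_0$-$R^l\frak{f}_0$-bimodule $\frak{f}_0W_1^l\frak{f}_0$ (since this bimodule carries a $k[x,y]_{h(x)g(x)h(y)g(y)}$-action, both $x$- and $y$-denominators are absorbed), and a final scaling of the new basis vectors by powers of $h(x)$ and $g(y)$ brings the coefficient matrix back into $M_{c_0\times c_1}(k[x,y])$ in normal form. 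The definition of ``normal'' only asks for non-zero diagonal entries, not units, so nothing more is needed.
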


\begin{proof} We have our fixed directed basis $\hueca{B}$ for $W_1$ and consider any numberings $v_1,\ldots,v_{c_0}$ of $\hueca{B}_{\frak{f}_0,\frak{f}_0}$ and $\tau_1,\ldots,\tau_{c_1}$ of $\hueca{T}$. So, we have $\delta(v_i)=\sum_{j=1}^{c_1}c_{i,j}(x,y)\tau_j$ and the coefficient matrix $C(x,y)=(c_{i,j}(x,y))\in M_{c_0\times c_1}(k[x,y])$ has rank $c_0$ by  assumption.  

We can reorder the elements of $\hueca{T}$ in  such a way that $C(x,y)=(C_0,C_1)$, with $C_0\in M_{c_0\times c_0}(k[x,y])$ and $C_1\in M_{c_0\times (c_1-c_0)}(k[x,y])$, where $C_0$ has non-zero determinant.   The matrix $C_0$ has entries in the polynomial algebra $k(y)[x]$. Using elementary operations on the rows of $C_0$, we can transform it into an upper triangular matrix. So, there is a  matrix $A\in M_{c_0\times c_0}(k(y)[x])$, which is invertible and such that $AC_0$ is an upper triangular matrix. Consider $g(y)\in k[y]$ such that $A\in M_{c_0\times c_0}(k[y]_{g(y)}[x])$ and the minimal ditalgebra ${\cal B}^l$ obtained from ${\cal B}$ by localization with respect to $g(x)$. 

Since the matrix $A=(a_{i,j}(x,y))$ is invertible in 
$$M_{c_0\times c_0}(k[y]_{g(y)}[x])\subseteq M_{c_0\times c_0}(k[x]_{h(x)g(x)}\otimes_kk[x]_{h(x)g(x)})=
M_{c_0\times c_0}(R^l\frak{f}_0\otimes_kR^l\frak{f}_0),$$
 the elements $v^l_1,\ldots,v^l_{c_0}$ defined by 
$v^l_i:=\sum_{j=1}^{c_0}{a_{i,j}}\phi(v_j)$
form a basis for the 
$R^l\frak{f}_0\g  R^l\frak{f}_0$-bimodule 
$\frak{f}_0W_1^l\frak{f}_0$, see 
\cite[(26.1)]{BSZ}.     From (\ref{D: hat(delta) y admis}) and (\ref{R: localization}), we know that   
 $$\hat{\delta}^l(\phi(v_i))=\sum_{t=1}^{c_1}c_{i,t}\hat{\phi}(\tau_t).$$
 Then, we have 
 $$\begin{matrix}
 \hat{\delta}^l(v^l_i)
 &=&
 \sum_{j=1}^{c_0}{a_{i,j}}\hat{\delta}^l\phi(v_j)
 =\sum_{j,t}a_{i,j}c_{j,t}\hat{\phi}(\tau_t)\hfill\\
 &=&
 \sum_{t=1}^{c_0}c^l_{i,t}\hat{\phi}(\tau_t)
 +
 \sum_{t>c_0}c^l_{i,t}\hat{\phi}(\tau_t),\hfill\\ 
 \end{matrix}$$
 where $c^l_{i,t}:=\sum_{j=1}^{c_0}a_{i,j}c_{j,t}$, for all $i\in [1,c_0]$ and $t\in [1,c_1]$. For $t\in [1,c_0]$, the coefficient $c^l_{i,t}$ is the $(i,t)$-entry of the upper triangular matrix $AC_0$, so $c^l_{i,t}=0$, for $i>t$. Since $AC_0$ has rank $c_0$, we have $c^{l}_{i,i}\not=0$, for each $i\in [1,c_0]$. Finally, as we did before in (\ref{R: descriptions of delta and}), we can  replace each one of the new basis  elements 
 $v^l_i$ by the product $h(x)^mv_i^lg(x)^m$ for a suitable $m\in \hueca{N}$ to obtain that the corresponding coefficient matrix is normal (with entries in $k[x,y]$).  
\end{proof}

\begin{remark} We already have a minimal ditalgebra ${\cal B}$, with only one marked point, and a full and faithful functor $F:{\cal B}\g\Mod\rightmap{}{\cal P}^1(\Lambda)$ with the properties listed in (\ref{P: el funtor F}). We also made in (\ref{R: choice of special basis}) a special choice of basis $\hueca{B}$ such that for special elements $u_t\in \hueca{B}_{\frak{f}_{r_t},\frak{f}_{l_t}}$, we have $F(f_{u_t})=\gamma_t$, for all $t\in [1,n]$.
 Now, if the coefficient matrix $C(x,y)$ has rank $c_0$, we can consider the minimal ditalgebra ${\cal B}^l$ constructed in the last lemma and the corresponding full and faithful functor $F^l:{\cal B}^l\g\Mod\rightmap{}{\cal B}\g\Mod$. It is not hard to see that the functor  $FF^l:{\cal B}\g\Mod\rightmap{}{\cal P}^1(\Lambda)$ has the same properties enumerated for $F$. Then, in this case, we may assume that the minimal ditalgebra ${\cal B}$ is already admissible, so we have  (\ref{P: C normal --> rad(k(x)f0) se factorizan bien}) at our disposal.   
\end{remark}

\begin{proposition}\label{P: el resto del main thm} If $\End_\Lambda(M(\lambda))=k$, for infinitely many $\lambda\in D(h)$, then 
$$\End_\Lambda(G)\cong k(x).$$ 
\end{proposition}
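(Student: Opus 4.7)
The plan is to translate the brick hypothesis into a rank condition on the coefficient matrix governing the differential of ${\cal B}$, localize to gain admissibility, and then apply the factorization result of Proposition \ref{P: C normal --> rad(k(x)f0) se factorizan bien} to force $\rad \End_\Lambda(G) = 0$.

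First I would extract the rank condition. The hypothesis together with Corollary \ref{C: caract de M(lambda) brics iff C(x) with rank c0} gives that the matrix $C(x)$ from Proposition \ref{P: la eq E(x)} has rank $c_0$ over $k(x)$. A direct comparison of entries shows that $C(x)$ is, up to reindexing of rows and columns, the transpose of the specialization at $y=x$ of the coefficient matrix $C(x,y) \in M_{c_0 \times c_1}(k[x,y])$ from Definition \ref{D: hat(delta) y admis}. Since specialization cannot increase rank and $C(x,y)$ has only $c_0$ rows, I conclude that $C(x,y)$ itself has rank $c_0$.

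Next, Lemma \ref{L: triangulacion de C} yields a polynomial $g(x)$ such that the localization ${\cal B}^l$ of ${\cal B}$ is admissible. By the remark immediately following that lemma, replacing ${\cal B}$ by ${\cal B}^l$ and $F$ by $F F^l$ preserves all the properties established in Proposition \ref{P: el funtor F}, so I may assume outright that ${\cal B}$ itself is admissible. Then Proposition \ref{P: C normal --> rad(k(x)f0) se factorizan bien} applies: every morphism in $\rad \End_{\cal B}(k(x)\frak{f}_0)$ is a $k$-linear combination of morphisms factoring through some $S^x_{\frak{f}_{z_t}}$ or through some $f^x_{u_t}$, for $t \in [1,n]$. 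Using Lemma \ref{L: F conmuta con otimes k(x)}, applying $F$ converts each such factorization into one in ${\cal P}^1_{k(x)}(\Lambda)$ through $S(\Lambda e_t)\otimes_k k(x)$ or through $\hat{\gamma}_t$; since $\Coker$ annihilates both, the composite $\Coker F$ annihilates $\rad \End_{\cal B}(k(x)\frak{f}_0)$.

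To finish, I would use that $F$ is full and faithful and $\Coker$ is full, so $\Coker F : \End_{\cal B}(k(x)\frak{f}_0) \to \End_\Lambda(G)$ is surjective. Combined with the decomposition $\End_{\cal B}(k(x)\frak{f}_0) = k(x)\frak{f}_0 \oplus \rad \End_{\cal B}(k(x)\frak{f}_0)$ from Lemma \ref{L: radEnd(k(x)f0}, and the fact that $\Coker F$ maps $k(x)\frak{f}_0$ isomorphically onto the canonical copy of $k(x)$ in $\End_\Lambda(G)$ while killing the radical summand by the previous step, this forces $\End_\Lambda(G) = k(x)$. The main technical obstacle is the bookkeeping in the first step: lifting the rank condition on $C(x)$, which encodes the fibrewise brick behaviour, to a rank condition on the two-variable matrix $C(x,y)$, which governs the global structure of $\hat{\delta}$. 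Once this bridge is in place, the rest is assembling pieces already established in the previous sections.
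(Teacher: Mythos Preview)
Your proposal is correct and follows essentially the same route as the paper: lift the rank-$c_0$ condition from $C(x)$ to $C(x,y)$ via the specialization argument, localize to make ${\cal B}$ admissible, apply Proposition~\ref{P: C normal --> rad(k(x)f0) se factorizan bien}, and then push the resulting factorizations forward through $F$ and $\Coker$ to kill $\rad\End_\Lambda(G)$. The only cosmetic difference is that the paper phrases the last step by lifting an arbitrary $g\in\rad\End_\Lambda(G)$ back to $\rad\End_{\cal B}(k(x)\frak{f}_0)$ and showing it dies under $\Coker F$, whereas you phrase it as surjectivity of $\Coker F$ together with the decomposition of $\End_{\cal B}(k(x)\frak{f}_0)$; these are the same argument read in opposite directions.
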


\begin{proof} Assume that 
$\End_\Lambda(M(\lambda))=k$, for infinitely many $\lambda\in D(h)$. From (\ref{C: caract de M(lambda) brics iff C(x) with rank c0}), we know that the  matrix $C(x)=(c^{v_i}_{\tau_j}(x,x))$ has rank $c_0$. This implies that the coefficient matrix $C(x,y)=(c^{v_i}_{\tau_j}(x,y))$ has rank $c_0$. Then, as observed in the previous remark, we may assume that ${\cal B}$ is admissible and we can apply (\ref{P: C normal --> rad(k(x)f0) se factorizan bien}) to ${\cal B}$, so every morphism  
$f\in \rad \End_{\cal B}(k(x)\frak{f}_0)$ is a $k$-linear combination of morphisms which factor through some object $S^x_{\frak{f}_{z_t}}$ or through some morphism $f^x_{u_t}$, for some $t\in [1,n]$. 

We may assume that $\hat{G}=F(k(x)\frak{f}_0)$. Given any morphism $g\in \rad\End_\Lambda(G)$, there is some $\hat{g}\in \rad\End_{{\cal P}^1(\Lambda)}(\hat{G})$, with $\Coker(\hat{g})=g$. Since $F:{\cal B}\g\Mod\rightmap{}{\cal P}^1(\Lambda)$ is full and faithful, we have $F(f)=\hat{g}$, for some $f\in\rad\End_{\cal B}(k(x)\frak{f}_0)$. So, there are finitely many commutative diagrams in ${\cal B}\g\Mod$: 
$$\begin{matrix}
k(x)\frak{f}_0&\rightmap{f_i}&k(x)\frak{f}_0\\
\shortlmapdown{h_i}&&\shortrmapup{g_i}\\
S^x_{\frak{f}_{l_{t_i}} }&\rightmap{f^x_{u_{t_i}}}&
S^x_{\frak{f}_{r_{t_i}} }
\end{matrix} \hbox{ \hskip 1cm}
\begin{matrix}
k(x)\frak{f}_0\hbox{\hskip-.5cm}&\hbox{\hskip.5cm}\rightmap{\ f'_j \ }&k(x)\frak{f}_0\\
&\ddmapdown{h'_j}\hbox{\hskip.4cm}&\ddmapup{g'_j}\hbox{\hskip.8cm}\\
&\hbox{\hskip .8cm}S^x_{\frak{f}_{z_{t_j}}}&\\
\end{matrix}$$
such that $f=\sum_if_i+\sum_jf'_j$. 
Therefore, after applying $F$ and using the diagrams of (\ref{L: F conmuta con otimes k(x)}), we obtain finitely many commutative diagrams in ${\cal P}^1(\Lambda)$: 
$$\begin{matrix}
\hat{G}&\rightmap{\hat{f}_i}&\hat{G}\\
\shortlmapdown{\hat{h}_i}&&\shortrmapup{\hat{g}_i}\\
L(\Lambda e_{t_i})\otimes_kV
&\rightmap{\gamma_{t_i}\otimes id}&
R(\Lambda e_{t_i})\otimes_kV 
\end{matrix} 
\hbox{ \hskip 1cm}
\begin{matrix}
\hat{G}\hbox{\hskip-1.5cm}&\hbox{\hskip1cm}\rightmap{ \ \hat{f}'_j \ }&\hat{G}\\
&\ddmapdown{\hat{h}'_j}\hbox{\hskip.4cm}&\ddmapup{\hat{g}'_j}\hbox{\hskip.4cm}\\
&\hbox{\hskip .6cm}S(\Lambda e_{t_j})\otimes_kV&\\
\end{matrix}$$
such that $\hat{g}=F(f)=\sum_i\hat{f}_i+\sum_j\hat{f}'_j$, 
where $V$ denotes the $k$-vector space $k(x)$. 
Here, each morphism $\gamma_t\otimes id=(\beta_t\otimes id)(\alpha_t\otimes id)$ factors through $U(\Lambda e_t)\otimes_kV$, so 
 we get that $g=\Coker(\hat{g})=0$. 
\end{proof}

\noindent{\bf Proof of the main Theorem (\ref{T: gen-bricks vs bricks}):} 
It follows immediately from (\ref{P: radEnd(G)=0 -> radEnd(M(lambda))=0, casi todo lambda})
and (\ref{P: el resto del main thm}) that the realization $M$ of $G$ that we have been studying satisfies the equivalence stated in (\ref{T: gen-bricks vs bricks}). As observed in the introduction, this proves our main theorem. 
\hfill$\square$

\medskip
\noindent{\bf Proof of the Corollary 
(\ref{C: tame conj M-P}):} If $G$ is a generic brick, then a realization $M$ of $G$ as in Theorem  (\ref{T: CB-realizations}) is free by the right, with rank equal to the endolength of $G$. Then, by (\ref{T: gen-bricks vs bricks}), $M$ determines an infinite family $\{M(\lambda)\}_\lambda$ of finite-dimensional bricks  $M(\lambda)=M\otimes_\Gamma \Gamma/(x-\lambda)$ with $k$-dimension $d=\Endol{G}$. 

Conversely, assume that $\{M_i\}_{i\in I}$ is an infinite family of non-isomorphic bricks with dimension $d$. Applying \cite[(5.5)]{CB2} to $d$, we obtain a generic $\Lambda$-module $G$ and a realization $M$ of $G$ over some rational algebra $\Gamma$, such that for  almost every $i\in I$,  $M_i\cong M\otimes_\Gamma\Gamma/(x-\lambda_i)^{m_i}$, for some $\lambda_i\in k$ and  $m_i\in\hueca{N}$. We may assume that this realization is as in (\ref{T: CB-realizations}), so $M\otimes_\Gamma-$ preserves almost split sequences. This implies that for each $\lambda$, we have   
 almost split sequences in $\Lambda\g\mod$ of the form: 
$$\begin{matrix} \  E_1  \rightmap{\alpha_1} E_2\rightmap{\beta_1} E_1,\\
 \,\\
 \ E_m \rightmap{(\alpha_m,\beta_{m-1})} E_{m+1}\oplus E_{m-1}\rightmap{(\beta_m,\alpha_{m-1})}E_m, \hbox{ for } m \geq  2,\\
\end{matrix}$$
with $E_m\cong M\otimes_\Gamma\Gamma/(x-\lambda)^m$. Then, for $m\geq 2$, the composition 
$$E_m\rightmap{ \ \beta_{m-1} \ }
E_{m-1}
\rightmap{ \ \alpha_{m-1} \ }E_m,$$
of irreducible morphisms with $\beta_{m-1}$ surjective and $\alpha_{m-1}$ injective is a non-zero morphism in $\rad\End_\Lambda(E_m)$.   This implies that $M_i\cong M(\lambda_i)$, for infinitely many $i\in I$. Hence, $G$ is a generic brick. 
\hfill$\square$

\begin{remark} Notice that in the statement of Theorem (\ref{T: gen-bricks vs bricks}), we can replace the second item by
\begin{enumerate}
\item[\it 2'.] \emph{For almost all $\lambda\in k$, we have that $M(\lambda)$ is a brick.}
\end{enumerate}
Indeed, we just have to replace the phrase ``for infinitely many $\lambda\in k$" by ``for almost all $\lambda\in k$" throughout the whole given proof of (\ref{T: gen-bricks vs bricks}). 
\end{remark}

\medskip
\noindent{\bf Acknowledgements.} The authors are indebted to M. Mousavand for drawing their  attention to the problem which motivated this work, during the workshop “Silting in Representation Theory, Singularities, and Noncommutative Geometry”,  held at Casa Matem\'atica  Oaxaca in  2023.
The authors acknowledge partial financial support from the \emph{Programa de Intercambio Acad\'emico UNAM-UADY}.

\hskip2cm

\vbox{\noindent R. Bautista\\
Centro de Ciencias Matem\'aticas\\
Universidad Nacional Aut\'onoma de M\'exico\\
Morelia, M\'exico\\
raymundo@matmor.unam.mx\\}

\vbox{\noindent E. P\'erez\\
Facultad de Matem\'aticas\\
Universidad Aut\'onoma de Yucat\'an\\
M\'erida, M\'exico\\
jperezt@correo.uady.mx\\}

\vbox{\noindent L. Salmer\'on\\
Centro de Ciencias  Matem\'aticas\\
Universidad Nacional Aut\'onoma de M\'exico\\
Morelia, M\'exico\\
salmeron@matmor.unam.mx\\}


\begin{thebibliography}{99}

\bibitem{ASS} I. Assem, S. Simson, A. Skowronski. Elements of Representation Theory of Associative Algebras 1. London Math. Soc. Student Texts 65, 2006.

\bibitem{ars} M. Auslander, I. Reiten, S.O. Smal\o. Representation Theory of Artin Algebras. Cambridge Studies in Advanced Math. 36, Cambridge University Press 1997.



\bibitem{BSZ} R. Bautista, L. Salmer\'on,  R. Zuazua.
Differential Tensor Algebras and their  Module
Categories. London Math. Soc. Lecture Note Series  362.
Cambridge University Press,  2009.

\bibitem{B-D} L. Bodnarchuk, Yu. Drozd, \textit{One class of wild but brick-tame matrix problems.} Journal
of Algebra  323  (2010) 3004--3019.

\bibitem{CB1} W.W. Crawley-Boevey. 
\textit{On tame algebras and bocses.}
Proc. London Math. Soc. (3) 56  (1988) 451--483.

\bibitem{CB2} W.W. Crawley-Boevey.
\textit{Tame algebras and generic modules.}
Proc. London Math. Soc. (3) 63  (1991) 241--265.

\bibitem{CB3} W.W. Crawley-Boevey. 
\textit{Modules of finite length over their endomorphism rings}, in: S. Brenner, H. Tachikawa
(Eds.), Representations of Algebras and Related Topics. London Math. Soc. Lect. Notes Series 168, 1992,
 127--184.

\bibitem{CB4} W.W. Crawley-Boevey. \textit{Tameness of biserial algebras. Arch. Math.} 65 (1995) 399--407.

\bibitem{DIJ} L. Demonet, O. Iyama, G. Jasso. \emph{$\tau$-Tilting Finite Algebras, Bricks, and g-Vectors},  International Mathematics Research Notices, Vol. 2019, No. 3, 852--892. 

\bibitem {D} Yu. A. Drozd.  
\textit{Tame and wild matrix problems.} 
Representations and quadratic forms. [Institute of
Mathematics, Academy of Sciences, Ukranian SSR, Kiev (1979) 
39--74] 
Amer. Math. Soc. Transl. 128 (1986) 31--55.



\bibitem{Kasjan} S. Kasjan. \emph{Auslander-Reiten Sequences under Base Field Extension}. 
Proceedings of the American Mathematical Society, Vol. 128, No. 10 (2000) 2885--2896. 

\bibitem{Kr} H. Krause. \emph{Krull–Schmidt categories and projective covers}. Expo. Math. 33 (2015) 535--549.

\bibitem{MP1} K. Mousavand, C. Paquette. \emph{Minimal ($\tau$-)tilting infinite algebras.} Nagoya Mathematical Journal 249 (2023) 221--238. 

\bibitem{MP2} K. Mousavand, C. Paquette. \emph{Biserial algebras and generic bricks.} Mathematische Zeitschrift, Vol. 310 (64) (2025) 1--28. 


\bibitem{MP3} K. Mousavand, C. Paquette. \emph{Hom-Orthogonal Modules  and Brick-Brauer-Thrall Conjectures.}  arXiv:2407.20877v2 [math.RT] 17-feb-2025.

\bibitem{Ri-spec} C.M. Ringel. \textit{The spectrum of a finite dimensional algebra.}
 Proc. Conf. Ring Theory Antwerp 1978. Marcel Dekker Lecture
Notes Pure Appl. Math 51 (1979), 535--797.


\bibitem{Ri} C.M. Ringel. Tame Algebras and Integral Quadratic Forms. Lecture Notes in Math. 1099. Springer-Verlag, 
Berlin, Heidelberg, New York, Tokyo, 1984.

\bibitem{R-K} A.V. Roiter, M.M. Kleiner.  \emph{Representations of differential graded categories.} Springer Lect. Notes in
Math. 488 (1975) 316--339. 

\bibitem{Sentieri} F. Sentieri. \emph{A brick version of a theorem of Auslander}. Nagoya Math. J., 249 (2023), 88--106. 
  
\end{thebibliography}
 \end{document}